\documentclass[arxiv]{imsarxiv}

\RequirePackage{amsthm,amsmath,amsfonts,amssymb}
\RequirePackage[numbers]{natbib}
\RequirePackage[colorlinks,citecolor=blue,urlcolor=blue]{hyperref}%

\usepackage{amsbsy,amsgen,amscd,mathrsfs}
\usepackage{pifont,dsfont}

\usepackage{mathtools}

\usepackage{xcolor} %

\definecolor{cit}{rgb}{0.91,0.39,0.16}	%

\definecolor{dark-gray}{gray}{0.3}

\definecolor{dkgray}{rgb}{.3,.3,.3}
\definecolor{medgray}{rgb}{.5,.5,.5}
\definecolor{ltgray}{rgb}{.7,.7,.7}
\definecolor{dkblue}{rgb}{0,0,.5}
\definecolor{medblue}{rgb}{0,0,.75}
\definecolor{ltblue}{rgb}{0.97,0.97,1}
\definecolor{rust}{rgb}{0.5,0.1,0.1}
\definecolor{ltyellow}{rgb}{1, 1, 0.9}

\usepackage{soul}
\sethlcolor{ltyellow}
\newcommand{\hilite}[1]{#1}

\usepackage[normalem]{ulem}

\usepackage{bm} %

\usepackage[utf8]{inputenc} %
\usepackage[T1]{fontenc} %

\newcommand{\econst}{\mathrm{e}}
\newcommand{\iunit}{\mathrm{i}}

\newcommand{\eps}{\varepsilon}

\renewcommand{\phi}{\varphi}

\newcommand{\onecirc}{\text{\ding{192}}}
\newcommand{\twocirc}{\text{\ding{193}}}

\newcommand{\vct}[1]{\bm{#1}}
\newcommand{\mtx}[1]{\bm{#1}}
\newcommand{\set}[1]{\mathsf{#1}}

\newcommand{\coll}[1]{\mathcal{#1}}

\newcommand{\term}[1]{\hl{\textit{#1}}}

\newcommand{\N}{\mathbb{N}}
\newcommand{\Z}{\mathbb{Z}}

\newcommand{\R}{\mathbb{R}}

\newcommand{\C}{\mathbb{C}}
\newcommand{\F}{\mathbb{F}}

\newcommand{\M}{\mathbb{M}}
\newcommand{\Sym}{\mathbb{H}}
\newcommand{\comp}{\textsf{c}}

\newcommand{\indicator}{\mathds{1}}

\renewcommand{\Re}{\operatorname{Re}}

\newcommand{\range}{\operatorname{range}}

\newcommand{\trace}{\operatorname{Tr}}

\newcommand{\Id}{\mathbf{I}}

\newcommand{\transp}{\mathsf{t}}

\newcommand{\psdle}{\preccurlyeq}
\newcommand{\psdge}{\succcurlyeq}

\newcommand{\abs}[1]{\vert {#1} \vert}
\newcommand{\norm}[1]{\Vert {#1} \Vert}
\newcommand{\ip}[2]{\langle {#1}, \ {#2} \rangle}

\newcommand{\abssq}[1]{\abs{#1}^2}

\newcommand{\fnorm}[1]{\norm{#1}_{\mathrm{F}}}
\newcommand{\fnormsq}[1]{\norm{#1}_{\mathrm{F}}^2}

\newcommand{\labs}[1]{\left\vert {#1} \right\vert}
\newcommand{\lnorm}[1]{\left\Vert {#1} \right\Vert}

\newcommand{\diff}{\mathrm{d}}
\newcommand{\Diff}{\mathrm{D}}
\newcommand{\idiff}{\,\diff}

\newcommand{\Expect}{\operatorname{\mathbb{E}}}
\newcommand{\Var}{\operatorname{Var}}

\newcommand{\Probe}{\mathbb{P}}
\newcommand{\Prob}[1]{\Probe\left\{ #1 \right\}}

\newcommand{\condbar}{\, \vert \,}

\newcommand{\Varo}{\mathsf{Var}}

\newcommand{\normal}{\textsc{normal}}
\newcommand{\uniform}{\textsc{uniform}}

\newcommand{\indep}{\perp \!\!\!\! \perp}

\newcommand{\goe}{\mathrm{goe}}
\newcommand{\gue}{\mathrm{gue}}
\newcommand{\rect}{\mathrm{rect}}

\DeclareFontFamily{U}{matha}{\hyphenchar\font45}
\DeclareFontShape{U}{matha}{m}{n}{
  <-6> matha5 <6-7> matha6 <7-8> matha7
  <8-9> matha8 <9-10> matha9
  <10-12> matha10 <12-> matha12
  }{}

\DeclareSymbolFont{matha}{U}{matha}{m}{n}
\DeclareMathSymbol{\abscont}{3}{matha}{"CE}

\makeatletter
\def\paragraph{\@startsection{paragraph}{4}%
  \z@\z@{-\fontdimen2\font}%
  {\normalfont\scshape}}
\makeatother

\startlocaldefs
\numberwithin{equation}{section}
\theoremstyle{plain}

\newtheorem{theorem}{Theorem}[section]

\newtheorem{proposition}[theorem]{Proposition}
\newtheorem{fact}[theorem]{Fact}

\newtheorem{conjecture}[theorem]{Conjecture}
\newtheorem{corollary}[theorem]{Corollary}

\theoremstyle{definition}

\newtheorem{definition}[theorem]{Definition}

\newtheorem{remark}[theorem]{Remark}

\endlocaldefs

\begin{document}

\begin{frontmatter}
\title{Comparison theorems for the extreme eigenvalues \\ of a random symmetric matrix}
\runtitle{Comparison theorems for extreme eigenvalues}

\begin{aug}
\author[A]{\fnms{Joel A.}~\snm{Tropp}\ead[label=e1]{jtropp@caltech.edu}\orcid{0000-0003-1024-1791}}
\address[A]{Department of Computing and Mathematical Sciences, Caltech, Pasadena, CA, USA\printead[presep={,\ }]{e1}}

\end{aug}

\begin{abstract}
This paper establishes a comparison theorem for the maximum
eigenvalue of a sum of independent random symmetric matrices.
The theorem states that the maximum eigenvalue of the matrix sum
is dominated by the maximum eigenvalue of a Gaussian random matrix
whose statistics match the sum, and it strengthens
previous results of this type.
Corollaries address the minimum eigenvalue and the spectral norm;
the proof strategy also extends to matrix martingale sequences.

The comparison methodology is powerful because of the vast arsenal of tools
for treating Gaussian random matrices.
As applications, the paper improves on existing eigenvalue bounds for random matrices arising
in spectral graph theory, quantum information theory, high-dimensional statistics, and numerical linear algebra.
In particular, these techniques deliver the first complete proof that
a sparse random dimension reduction map has the injectivity properties
conjectured by Nelson \& Nguyen in 2013.
\end{abstract}

\begin{keyword}[class=MSC]
\kwd[Primary ]{15-B52, 60-B20}
\kwd{random matrices}
\end{keyword}

\begin{keyword}
\kwd{Comparison theorem}
\kwd{high-dimensional probability}
\kwd{high-dimensional statistics}
\end{keyword}

\end{frontmatter}

\section{Main results}

Random symmetric matrices have abundant applications in high-dimensional probability,
high-dimensional statistics, and computational mathematics.
For example, they model the adjacency matrix of a random graph,
the empirical covariance of a random vector, and
the action of a randomized subspace embedding.
The most important statistics of these random matrix models
are the extreme eigenvalues, which quantify how much the
matrix can dilate a vector.

We can study the spectral properties of a random matrix
by comparison with a Gaussian random matrix model that
shares the same first- and second-order moments.
It is fruitful to pass to the Gaussian matrix %
because we gain access to a richer cornucopia
of technical methods.
This idea has a long and productive history in
random matrix theory (RMT), summarized in Section~\ref{sec:related}.
Our implementation %
involves a novel application
of Stahl's theorem~\cite{Sta13:Proof-BMV} on the structure
of the trace exponential function of a matrix.
The technique allows us to establish general-purpose theorems
that improve over existing results along several axes.

As evidence, we present stronger guarantees for
random matrices that arise in some contemporary problems.
Our main application is the first complete proof that
sparse random dimension reduction maps enjoy the injectivity
properties conjectured by Nelson \& Nguyen~\cite{NN13:OSNAP-Faster,NN14:Lower-Bounds};
see Section~\ref{sec:sparsestack-appl}.
As compared with existing results, our bounds reduce
the dependence on the dimension of the random matrix model,
so the methods are particularly useful for treating the exponentially
large random matrices that arise in quantum information theory (Section~\ref{sec:quantum-appl}).
Other applications include a new analysis of the expansion properties
of random regular graphs with growing degree (Section~\ref{sec:rdm-graph-appl})
and new bounds for the minimum eigenvalue of a sample covariance matrix (Section~\ref{sec:scov-appl}).

This paper also includes new tail bounds for the maximum
eigenvalue of a matrix martingale sequence.  The bounds
compare the trajectory of the martingale against a fixed
Gaussian reference model.  These results do not appear
to have precedents in the RMT literature.  See Section~\ref{sec:freedman}.

The companion paper~\cite{Tro26:Comparison-Theorems-Minimum} was
the first work to recognize that Stahl's theorem has
implications for RMT, but the arguments there are quite different.
Our analysis addresses both the real and complex setting,
and we use the term \term{self-adjoint} to encompass
both real symmetric and complex Hermitian matrices.
The notation is largely standard; see Section~\ref{sec:notation}. %

\subsection{The independent sum model}

Consider an independent family $(\mtx{W}_1, \dots, \mtx{W}_n)$
of random \hilite{self-adjoint} matrices, either real or complex,
with common dimension $d$.
We place the standing assumption that each random matrix
has two finite moments: $\Expect \norm{\mtx{W}_i}^2 < + \infty$,
where $\norm{\cdot}$ denotes the {spectral norm}.
Construct the sum of the random matrices:
\begin{equation} \label{eqn:indep-sum-main}
\mtx{Y} \coloneqq \sum_{i=1}^n \mtx{W}_i.
\end{equation}
We call this the \term{independent sum model}.  It serves as a versatile
template for describing random matrices that arise in theoretical, applied,
and computational mathematics.
For some examples, see the applications in Section~\ref{sec:applications}
or the expository works~\cite{Tro15:Introduction-Matrix,Tro26:Applied-Random}. %

For the independent sum model~\eqref{eqn:indep-sum-main},
the most important statistics are its extreme eigenvalues,
$\lambda_{\max}(\mtx{Y})$ and $\lambda_{\min}(\mtx{Y})$.
\term{Matrix concentration inequalities} provide probability bounds
for the extreme eigenvalues %
in terms of simple, accessible statistics of the summands~\cite{Tro15:Introduction-Matrix}. %
These results have made an enormous impact over the last 15 years
because they reduce (formerly) challenging problems in random matrix theory
to short linear algebra calculations.
Nevertheless, the basic results, such as the matrix Bernstein inequality,
do not provide very precise information,
and researchers have continued to seek more refined results.

\subsection{The Gaussian proxy model}

This paper pursues the idea that we can obtain better matrix concentration inequalities
by passing from the independent sum~\eqref{eqn:indep-sum-main} to another random matrix
that is easier to analyze.
We compare the independent sum $\mtx{Y}$ %
with a \hilite{Gaussian} self-adjoint matrix $\mtx{Z}$
that has the same first- and second-order moments.
In symbols,
\begin{equation} \label{eqn:gauss-proxy-main}
\mtx{Z} \sim \normal( \Expect[\mtx{Y}], \ \Varo[\mtx{Y}] ).
\end{equation}
Section~\ref{sec:gaussian} contains more background on Gaussian matrices.
For intuition, the multivariate central limit theorem~\cite[Thm.~9.6.1]{Dud02:Real-Analysis}
suggests that slowly varying real-valued functions of $\mtx{Y}$ and $\mtx{Z}$ should have similar expectations,
provided that no summand $\mtx{W}_i$ has an outsize influence on the sum $\mtx{Y}$.
We will identify conditions under which the \hilite{extreme eigenvalues}
of $\mtx{Z}$ control the extreme eigenvalues of $\mtx{Y}$
in the nonasymptotic setting.

This approach has deep roots in the RMT literature, where it is common
to replace a random matrix model with an associated Gaussian model~\cite{Tao19:Least-Singular}.
Recently, Brailovskaya \& van Handel~\cite{BvH24:Universality-Sharp}
developed new methods for comparing the independent sum~\eqref{eqn:indep-sum-main}
with the Gaussian model~\eqref{eqn:gauss-proxy-main}, and they derived a family of nonasymptotic universality results for the spectral statistics.
In a similar spirit, this paper establishes stronger one-sided bounds for the extreme eigenvalues via simpler arguments. %
See Section~\ref{sec:related} for discussion of related work.

\subsection{Matrix fluctuation statistics}
\label{sec:norm-flux}

Our comparison theorems are stated in terms of two summary statistics of the
Gaussian proxy model~\eqref{eqn:gauss-proxy-main}.  Define the \term{matrix fluctuation}:
\begin{equation} \label{eqn:maxeig-flux}
\phi(\mtx{Z}) \coloneqq \Expect \lambda_{\max}( \mtx{Z} - \Expect \mtx{Z} )  \geq 0.
\end{equation}
The quantity~\eqref{eqn:maxeig-flux} describes the scale on which the Gaussian matrix $\mtx{Z}$ varies about
its expectation matrix.  Matrix Khinchin inequalities, such as \eqref{eqn:mki-eig} and \eqref{eqn:mki-2nd} below,
provide actionable upper bounds for the matrix fluctuation~\eqref{eqn:maxeig-flux}
in terms of the second-order moments of the Gaussian matrix,
but it can be challenging to evaluate the matrix fluctuation precisely.

Second, define the \term{weak variance}:
\begin{equation} \label{eqn:weak-var-gauss}
\sigma_*^2(\mtx{Z}) \coloneqq \sup\nolimits_{\norm{\vct{u}}=1} \Var[ \vct{u}^* \mtx{Z} \vct{u} ].
\end{equation}
This quantity controls the concentration of the real random variable $\lambda_{\max}(\mtx{Z})$ %
about its expectation.
The weak variance depends only on the second-order moments of
$\mtx{Z}$, so we can often evaluate~\eqref{eqn:weak-var-gauss} without undue effort.
As a \hilite{heuristic}, the weak variance is often much smaller
than the square of the matrix fluctuation:
$\sigma_*(\mtx{Z}) \ll \phi(\mtx{Z})$.
Section~\ref{sec:gaussian} offers more commentary on these statistics.

\subsection{Comparison: Maximum eigenvalue of a self-adjoint matrix}

The main result of this paper provides a stochastic comparison
between the maximum eigenvalue of an independent sum
of random self-adjoint matrices and its Gaussian proxy.

\begin{theorem}[Comparison: Maximum eigenvalue of an independent sum] \label{thm:maxeig-main}
Consider an independent family $(\mtx{W}_1, \dots, \mtx{W}_n)$ of
random \hilite{self-adjoint} matrices with common dimension $d$
and with two finite moments.
Assume the random matrices satisfy the uniform upper bound
\begin{equation} \label{eqn:maxeig-R+}
\lambda_{\max}(\mtx{W}_i - \Expect \mtx{W}_i) \leq R_+
\quad\text{for $i = 1, \dots, n$.}
\end{equation}
Form the sum and its Gaussian proxy:
\[
\mtx{Y} \coloneqq \sum_{i=1}^n \mtx{W}_i
\quad\text{and}\quad
\mtx{Z} \sim \normal(\Expect[\mtx{Y}], \Varo[\mtx{Y}]).
\]
Then the maximum eigenvalues admit the comparison
\begin{equation} \label{eqn:maxeig-expect}
\Expect \lambda_{\max}(\mtx{Y})
	\leq \Expect \lambda_{\max}(\mtx{Z})
		+ \sqrt{\left(\tfrac{1}{3} R_+ \phi(\mtx{Z}) + \sigma_*^2(\mtx{Z}) \right) \cdot 2\log d}
		+ \tfrac{1}{3} R_+ \log d.
\end{equation}
Furthermore, for a parameter $s \geq 0$, the tail probability satisfies the bound
\begin{equation} \label{eqn:maxeig-tail}
\Prob{ \lambda_{\max}(\mtx{Y}) \geq \Expect \lambda_{\max}(\mtx{Z})  +
	\sqrt{\left(\tfrac{1}{3} R_+ \phi(\mtx{Z}) + \sigma_*^2(\mtx{Z})\right) \cdot 2s} + \tfrac{1}{3} R_+ s }
	\leq d \cdot \econst^{-s}.
\end{equation}
The statistics $\phi(\mtx{Z})$ and $\sigma_*^2(\mtx{Z})$ are defined in Section~\ref{sec:norm-flux}.
\end{theorem}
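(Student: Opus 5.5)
We will prove the theorem by comparing the trace moment generating functions of $\mtx{Y}$ and $\mtx{Z}$, and then converting that comparison into the stated bounds. Two reductions come first. Subtracting $\Expect \mtx{W}_i$ from each summand shifts both $\mtx{Y}$ and $\mtx{Z}$ by the same deterministic matrix, so we may assume $\Expect \mtx{W}_i = \mtx{0}$. Write $\mtx{A} \coloneqq \Expect \mtx{Y}$ for the original mean and work with the centered sum. The target is an inequality of the form
\[
\Expect \trace \econst^{\theta \mtx{Y}} \leq \Expect \trace \exp\bigl( \theta \mtx{Z} + g(\theta) \cdot \text{(correction)} \bigr),
\qquad g(\theta) \coloneqq \frac{\theta^2/2}{1 - \theta R_+/3},
\]
valid for $0 \le \theta < 3/R_+$.

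The key step uses Stahl's theorem. For self-adjoint $\mtx{H}$ and $\mtx{K}$, the map $t \mapsto \trace \econst^{\mtx{H} + t\mtx{K}}$ is the Laplace transform of a positive measure. As a result, expectations of the trace exponential behave like expectations of scalar exponentials of a mixture of linear functionals.

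We will replace the summands $\mtx{W}_i$ one at a time by independent Gaussian matrices $\mtx{G}_i$ with the same covariance (a Lindeberg-type interpolation). At each step, condition on all the other summands, collected into $\mtx{H}$, and compare $\Expect \trace \econst^{\mtx{H} + \theta\mtx{W}_i}$ with $\Expect \trace \econst^{\mtx{H} + \theta \mtx{G}_i}$. For the $\mtx{W}_i$ term we use the Bernstein-type scalar inequality $\econst^{x} \le 1 + x + g(\theta)\, x^2/\theta^2$ for $x \le \theta R_+$. Applying it through the Stahl representation, or through the eigenvalue bound $\lambda_{\max}(\mtx{W}_i) \le R_+$ and operator monotonicity, gives a second-order term of size $g(\theta)\Varo[\mtx{W}_i]$. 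The Gaussian term has exactly $\theta^2/2$ in the same place. The discrepancy is therefore of order $(g(\theta) - \theta^2/2)\Varo[\mtx{W}_i]$, and this is roughly $\theta^3 R_+/6$ times the variance.

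Summed over $i$, the excess becomes a perturbation of $\mtx{Z}$ by an independent Gaussian with variance scaled by a factor $1 + O(\theta R_+)$. Equivalently, we compare with $\sqrt{1+\epsilon}\,\mtx{Z}$-type fluctuations. The excess in $\lambda_{\max}$ is then controlled by $\epsilon\,\phi(\mtx{Z})$. This is where the product $R_+\phi(\mtx{Z})/3$ enters, paired with the weak variance. We expect this step to be the main obstacle: making the one-step replacement rigorous in the noncommutative setting without losing a dimensional factor or a dependence on $\norm{\Varo}$. We will try to handle it by exploiting Stahl positivity along the direction $\mtx{W}_i$, which linearizes the exponential, rather than by using Lieb concavity.

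Once the mgf comparison is in hand, we bound the trace by $d$ times the maximum eigenvalue:
\[
\Expect \trace \exp(\theta \mtx{Z}_\epsilon) \le d \, \Expect \econst^{\theta \lambda_{\max}(\mtx{Z}_\epsilon)}.
\]
Gaussian concentration for $\lambda_{\max}$, which is Lipschitz with constant $\sigma_*(\mtx{Z})$, gives
\[
\Expect \econst^{\theta (\lambda_{\max}(\mtx{Z}_\epsilon) - \Expect \lambda_{\max}(\mtx{Z}))} \le \exp\bigl( \theta^2 \sigma_*^2/2 + \text{(inflation cost)} \bigr).
\]
Together these give a log-mgf bound of Bernstein form,
\[
\log \Expect \econst^{\theta(\lambda_{\max}(\mtx{Y}) - \Expect\lambda_{\max}(\mtx{Z}))} \le \log d + \frac{\theta^2 v/2}{1-\theta R_+/3},
\qquad v = \tfrac13 R_+\phi(\mtx{Z}) + \sigma_*^2(\mtx{Z}).
\]
Optimizing $\theta$ by the standard Bernstein computation gives the tail bound~\eqref{eqn:maxeig-tail}. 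Applying Jensen to $\theta\Expect\lambda_{\max}(\mtx{Y})$ and optimizing again gives the expectation bound~\eqref{eqn:maxeig-expect} with $\log d$ in place of $s$.
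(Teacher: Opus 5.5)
Your overall architecture matches the paper: centering, a Lindeberg swap, Stahl's theorem on the $\mtx{W}_i$ side, comparison with an inflated Gaussian, $\trace \econst^{\mtx{M}} \leq d\, \econst^{\lambda_{\max}(\mtx{M})}$, Gaussian concentration, Weyl's inequality to split off $\phi(\mtx{Z})$, and Bernstein optimization. Your end-game is sound. The gap is the one-step comparison. You flag it as the main obstacle and leave it unresolved, and the mechanism you sketch would not work.

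\begin{itemize}
\item Your ``operator monotonicity'' alternative fails. Because $\mtx{H}$ and $\mtx{W}_i$ do not commute, an operator bound on $\econst^{\theta \mtx{W}_i}$ can only enter $\trace \econst^{\mtx{H} + \theta \mtx{W}_i}$ through Lieb's theorem. That returns the ordinary matrix Bernstein bound, not a Gaussian comparison. Only the Stahl route works. With $f(t;\mtx{K}) \coloneqq \trace \econst^{\mtx{H} + t\mtx{K}}$, the Stahl measure along $\mtx{W}_i$ is supported on $[\lambda_{\min}(\mtx{W}_i), \lambda_{\max}(\mtx{W}_i)]$. Integrating your scalar inequality against it gives $f(\theta;\mtx{W}_i) \leq f(0) + \theta f'(0;\mtx{W}_i) + g(\theta) f''(0;\mtx{W}_i)$.
\item The Gaussian side is not ``exactly $\theta^2/2$.'' The quantity $\Expect \trace \econst^{\mtx{H} + \theta \mtx{G}_i}$ has higher-order terms, and you give no control over them.
\item The discrepancies $(g(\theta) - \theta^2/2)\,\Expect f''(0;\mtx{W}_i)$ are evaluated at different random base points $\mtx{H}$. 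Nothing you describe sums them into an independent Gaussian perturbation of $\mtx{Z}$.
\end{itemize}

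The missing idea is a one-sided lower bound for the Gaussian trace mgf. It also needs Stahl, now applied along the Gaussian direction. For fixed $\mtx{H}$ and symmetric $\mtx{G}$,
\[ \Expect \trace \econst^{\mtx{H} + \mtx{G}} \geq \trace \econst^{\mtx{H}} + \tfrac{1}{2} \Expect f''(0; \mtx{G}). \]
The paper proves this with a second-order Taylor expansion. Since $f^{(4)} \geq 0$, the third derivative $f'''$ is increasing, so the remainder is bounded below by $f'''(0;\mtx{G})$, which has mean zero by symmetry. Equivalently, $\tfrac12[f(1)+f(-1)] = \int \cosh \lambda \idiff{\nu}(\lambda) \geq f(0) + \tfrac12 f''(0)$.

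With this bound, stop comparing $\theta \mtx{W}_i$ with $\theta \mtx{G}_i$. Compare it directly with $\tilde{g}\mtx{G}_i$, where $\tilde{g}^2/2 = g(\theta)$; the paper uses the exact Bennett scale $g_R(\theta)$ instead. Matching second-moment tensors give $\Expect f''(0;\mtx{W}_i) = \Expect f''(0;\mtx{G}_i)$. The two bounds then chain to $\Expect \trace \econst^{\mtx{H} + \theta \mtx{W}_i} \leq \Expect \trace \econst^{\mtx{H} + \tilde{g}\mtx{G}_i}$ with no error term. Telescoping gives $\Expect \trace \econst^{\theta \mtx{Y}} \leq \Expect \trace \econst^{\tilde{g}(\mtx{Z} - \Expect \mtx{Z}) + \theta \Expect \mtx{Z}}$.

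From there your end-game closes. Concentration costs $\tilde{g}^2\sigma_*^2/2 = \tfrac12\theta^2\sigma_*^2/(1-\theta R_+/3)$. Weyl gives $(\tilde{g}-\theta)\phi \leq \tfrac12 \theta^2 (R_+\phi/3)/(1-\theta R_+/3)$. Together these yield exactly your Bernstein-form log-mgf bound.
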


To prove Theorem~\ref{thm:maxeig-main}, we employ Lindeberg's method to compare the \term{trace moment
generating function (trace mgf)} of the random matrix $\mtx{Y}$ with the
trace mgf of the Gaussian proxy $\mtx{Z}$.
This argument crucially exploits Stahl's theorem~\cite{Sta13:Proof-BMV},
a deep result on the structure of the trace exponential function.
See Section~\ref{sec:maxeig-proof} for the details, along with more refined bounds.

\subsubsection{Remarks}

The bound~\eqref{eqn:maxeig-expect} from Theorem~\ref{thm:maxeig-main} confirms
the intuition we distilled from the central limit theorem.
The expected maximum eigenvalue $\Expect \lambda_{\max}(\mtx{Y})$
of the sum is controlled by the expected maximum eigenvalue
$\Expect \lambda_{\max}(\mtx{Z})$ of the Gaussian proxy, plus some error terms.
The comparison is informative when none of the summands $\mtx{W}_i$
has an undue influence on the sum,
as quantified by the upper bound statistic $R_+$. %
Roughly speaking, when $R_+ \log d \ll \phi(\mtx{Z})$,
the error terms have a smaller scale than the expected maximum
eigenvalue $\Expect \lambda_{\max}(\mtx{Z})$ of the Gaussian proxy.

Let us emphasize that the hypothesis~\eqref{eqn:maxeig-R+} only
places a bound on
the maximum eigenvalue $\lambda_{\max}(\mtx{W}_i - \Expect \mtx{W}_i)$,
rather than %
the spectral norm.
This feature is critical because it yields more precise information about instances,
such as a sample covariance matrix,
where the maximum and minimum eigenvalues of the summands have different scales.
Under the sole assumption~\eqref{eqn:maxeig-R+},
we cannot establish a two-sided exponential concentration inequality for
the maximum eigenvalue $\lambda_{\max}(\mtx{Y})$, %
but only the one-sided tail bound~\eqref{eqn:maxeig-tail}.

By the stability of the Gaussian distribution,
we can also express the Gaussian proxy $\mtx{Z}$ as a
sum of independent terms:
\[
\mtx{Z} = \sum_{i=1}^n \mtx{X}_i
\quad\text{where}\quad \mtx{X}_i \sim \normal(\Expect[\mtx{W}_i], \Varo[\mtx{W}_i]),
\]
and the family $(\mtx{X}_1, \dots, \mtx{X}_n)$ is statistically independent.
By monotonicity %
of the Gaussian distribution (Fact~\ref{fact:gauss-mono}),
Theorem~\ref{thm:maxeig-main} holds true if we replace the Gaussian matrix $\mtx{Z}$
by another Gaussian matrix $\mtx{Z}'$ with greater expectation and with
greater variability:
\begin{equation} \label{eqn:gauss-proxy-dom}
\Expect[ \mtx{Z}' ] \psdge \Expect[ \mtx{Z} ]
\quad\text{and}\quad
\Varo[ \mtx{Z}' ] \psdge \Varo[ \mtx{Z} ].
\end{equation}
The symbol $\psdge$ denotes the \term{psd partial order} on self-adjoint operators,
i.e., the \term{Loewner partial order}.
The two
devices in this paragraph can simplify applications of the theorem.

At a formal level, the bounds~\eqref{eqn:maxeig-expect} and~\eqref{eqn:maxeig-tail}
have some parallels with the matrix Bernstein
inequality~\cite[Thm.~6.1]{Tro12:User-Friendly}.
Indeed, the upper tail of the maximum eigenvalue $\lambda_{\max}(\mtx{Y})$
contains both a subgaussian component and a subexponential component.
The subgaussian component has variance on the scale
$R_+ \phi(\mtx{Z}) + \sigma_*^2(\mtx{Z})$; the
subexponential component has scale $R_+$.
We also pay a weak dimensional factor $\log d$,
which is necessary to describe worst-case instances.

The probability inequalities from Theorem~\ref{thm:maxeig-main}
improve uniformly over similar results from the literature.
Modulo constants, the new bounds are always sharper than the matrix Bernstein
inequality~\cite[Thm.~6.1]{Tro12:User-Friendly} because the dimensional
factors now appear only in the error terms.
The new bounds also strengthen the upper bound for the
maximum eigenvalue~\cite[Cor.~2.7]{BvH24:Universality-Sharp}
due to Brailovskaya \& van Handel because we pose weaker
hypotheses %
and obtain a better dependence on the dimension.
In addition, the proofs are strikingly different.
See~Section~\ref{sec:related} for more discussion.

This paper contains a second main result (Theorem~\ref{thm:freedman})
that gives tail bounds for the maximum eigenvalue of a matrix martingale sequence
by comparison with the maximum eigenvalue of a Gaussian reference model.
When specialized to independent sums, Theorem~\ref{thm:freedman}
strengthens Theorem~\ref{thm:maxeig-main} by providing
uniform control on the sequence of partial sums.

\subsubsection{Example: Wigner's matrix}

As a first example, we can apply Theorem~\ref{thm:maxeig-main} to obtain a very good
bound for the maximum eigenvalue of a Wigner matrix~\cite{Wig55:Characteristic-Vectors}.
For each dimension $d$, this random matrix is real and symmetric,
and it has independent and identically distributed (\term{iid}) Rademacher entries above its diagonal:
\begin{equation} \label{eqn:rad-wigner}
\mtx{Y} \coloneqq %
	\sum_{1 \leq j < k \leq d} \eps_{jk} \cdot (\mathbf{E}_{jk} + \mathbf{E}_{kj})
	\quad\text{where $\eps_{jk} \sim \uniform\{\pm 1\}$ iid.}
\end{equation}
Here, $\mathbf{E}_{jk}$ denotes the nonsymmetric $d \times d$ basis matrix
with a one in the $(j, k)$ position and zeros elsewhere.
The definition~\eqref{eqn:rad-wigner} expresses $\mtx{Y}$
as a sum of independent centered random self-adjoint matrices,
each with maximum eigenvalue bounded by $R_+ \coloneqq 1$.

The Gaussian proxy for the Wigner matrix $\mtx{Y}$ is a
random self-adjoint matrix of the form
\begin{equation} \label{eqn:gauss-wigner}
\mtx{Z} \coloneqq %
	\sum_{1 \leq j < k \leq d} \gamma_{jk} \cdot (\mathbf{E}_{jk} + \mathbf{E}_{kj}) %
	\quad\text{where $\gamma_{jk} \sim \normal_{\R}(0,1)$ iid.}
\end{equation}
By direct calculation, the weak variance statistic $\sigma_*^2(\mtx{Z}) < 2$,
regardless of the dimension $d$.
Meanwhile, we can obtain precise bounds for the maximum eigenvalue
of the Gaussian matrix~\eqref{eqn:gauss-wigner}
using Slepian's lemma~\cite[Sec.~7.3 and Exer.~7.11]{Ver25:High-Dimensional-Probability}:
\[
\phi(\mtx{Z}) = \Expect \lambda_{\max}(\mtx{Z}) %
\leq 2\sqrt{d}.
\]
An application of Theorem~\ref{thm:maxeig-main} yields
\begin{align*}
\Expect \lambda_{\max}(\mtx{Y})
	&\leq \Expect \lambda_{\max}(\mtx{Z}) + \sqrt{\left(\tfrac{1}{3} R_+ \phi(\mtx{Z}) + \smash{\sigma_*^2}(\mtx{Z})\right) \cdot 2\log d} + \tfrac{1}{3} R_+ \log d \\
	&\leq 2 \sqrt{d} + \sqrt{ \left(\tfrac{1}{3} \cdot 2\sqrt{d} + 2\right) \cdot 2 \log d } + \tfrac{1}{3} \log d
	= 2 \sqrt{d} + \mathcal{O}(d^{1/4} \log^{1/2} d).
\end{align*}
The leading term $2\sqrt{d}$ is numerically sharp.
The error term is negligible but does not match the optimal order $\mathcal{O}(d^{-1/6})$
predicted by the Tracy--Widom law~\cite[Thm.~3.4]{BvH26:Extremal-Random}.
For comparison,
the general-purpose maximum eigenvalue bound
of Brailovskaya \& van Handel~\cite[Cor.~2.7]{BvH24:Universality-Sharp}
produces a larger error term $\mathcal{O}(d^{1/3} \log^{2/3} d)$.
It is not possible to prove a similar result using classic matrix concentration tools;
cf.~\cite[Sec.~4.2.1]{Tro15:Introduction-Matrix}.

To summarize: we can quickly reach accurate estimates for
the statistics of the Gaussian proxy $\mtx{Z}$ using
specialized techniques that exploit its Gaussian distribution.
The comparison theorem (Theorem~\ref{thm:maxeig-main}) %
transfers these bounds back to the independent sum $\mtx{Y}$.
This approach is interesting because it is flexible enough
to address a variety of problems with economical arguments.

\subsubsection{Unbounded summands}

We stated the comparison theorem (Theorem~\ref{thm:maxeig-main})
under the strict assumption that the maximum eigenvalue of
each summand admits a uniform upper bound.  There is an analogous
result that comprehends %
unbounded summands.

\begin{corollary}[Comparison: Maximum eigenvalue, unbounded summands] \label{thm:maxeig-unbdd}
Consider an independent family $(\mtx{W}_1, \dots, \mtx{W}_n)$ of
random {self-adjoint} matrices with common dimension $d$
and with two finite moments.  Introduce the real random variable
\[
M \coloneqq \max\nolimits_i \lambda_{\max}(\mtx{W}_i - \Expect \mtx{W}_i).
\]
For a parameter $R \geq 2 \cdot \Expect M$, define the upper bound
statistic
\[
R_+ \coloneqq R + \max\nolimits_i \sigma_*(\mtx{W}_i).
\]
Form the sum and its Gaussian proxy:
\[
\mtx{Y} \coloneqq \sum_{i=1}^n \mtx{W}_i
\quad\text{and}\quad
\mtx{Z} \sim \normal(\Expect[\mtx{Y}], \Varo[\mtx{Y}]).
\]
For each $s \geq 0$, the maximum eigenvalue satisfies
the probability inequality
\begin{multline*}
\Probe\Big\{ \lambda_{\max}(\mtx{Y}) \geq
	\Expect \lambda_{\max}(\mtx{Z})  + \sqrt{2 \sigma_*^2(\mtx{Z})} \\ +
	\sqrt{\left(\tfrac{1}{3} R_+ \phi(\mtx{Z}) + \sigma_*^2(\mtx{Z})\right) \cdot  2s} + \tfrac{1}{3} R_+ s \Big\} %
	\leq \Prob{ M > R } + d \cdot \econst^{-s}.
\end{multline*}
The statistics $\phi(\mtx{Z})$ and $\sigma_*^2(\mtx{Z})$ and the weak variance $\sigma_*^2(\mtx{W}_i)$ are defined in Section~\ref{sec:norm-flux}.
\end{corollary}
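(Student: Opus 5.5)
We reduce to Theorem~\ref{thm:maxeig-main} by truncation. Write $\mtx{W}_i = \Expect\mtx{W}_i + \mtx{D}_i$, with $\mtx{D}_i$ centered, and introduce the truncated summands
\[
\tilde{\mtx{W}}_i \coloneqq \mtx{W}_i \cdot \indicator\{\lambda_{\max}(\mtx{D}_i) \leq R\} + \Expect\mtx{W}_i \cdot \indicator\{\lambda_{\max}(\mtx{D}_i) > R\}.
\]
These are independent. On the event $\{M \leq R\}$ they coincide with the originals, so $\tilde{\mtx{Y}} \coloneqq \sum_i \tilde{\mtx{W}}_i$ equals $\mtx{Y}$ there. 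This accounts for the term $\Prob{M>R}$. It then suffices to bound the tail of $\lambda_{\max}(\tilde{\mtx{Y}})$ using Theorem~\ref{thm:maxeig-main}, applied with the proxy $\tilde{\mtx{Z}} \sim \normal(\Expect\tilde{\mtx{Y}}, \Varo[\tilde{\mtx{Y}}])$.

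\textbf{Step 1: upper bound on the truncated summands.} Set $\tilde{\mtx{D}}_i = \mtx{D}_i\indicator\{\lambda_{\max}(\mtx{D}_i)\le R\}$, so that $\tilde{\mtx{W}}_i - \Expect\tilde{\mtx{W}}_i = \tilde{\mtx{D}}_i - \Expect\tilde{\mtx{D}}_i$. The first piece satisfies $\lambda_{\max}(\tilde{\mtx{D}}_i)\le R$. For the shift, note $\Expect\tilde{\mtx{D}}_i = -\Expect[\mtx{D}_i\indicator\{\lambda_{\max}(\mtx{D}_i)>R\}]$. For a unit vector $\vct{u}$, Cauchy--Schwarz gives
\[
\abs{\vct{u}^*(\Expect\tilde{\mtx{D}}_i)\vct{u}} \le \sqrt{\Expect\abssq{\vct{u}^*\mtx{D}_i\vct{u}}}\cdot \sqrt{\Prob{\lambda_{\max}(\mtx{D}_i)>R}} \le \sigma_*(\mtx{W}_i).
\]
Therefore $\lambda_{\max}(\tilde{\mtx{W}}_i-\Expect\tilde{\mtx{W}}_i) \le R+\max_i\sigma_*(\mtx{W}_i) = R_+$, which is exactly the hypothesis~\eqref{eqn:maxeig-R+}.

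\textbf{Step 2: compare the truncated proxy with $\mtx{Z}$.} By construction $\Varo[\tilde{\mtx{W}}_i] = \Varo[\tilde{\mtx{D}}_i] \psdle \Varo[\mtx{D}_i] = \Varo[\mtx{W}_i]$. This holds because truncation to an event (setting the variable to zero off the event) can only reduce the second moment operator, and centering only reduces it further. Hence $\Varo[\tilde{\mtx{Y}}]\psdle\Varo[\mtx{Y}]$.

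By the monotonicity of Gaussian matrices (Fact~\ref{fact:gauss-mono}, in the form~\eqref{eqn:gauss-proxy-dom}), we may therefore replace the variance of $\tilde{\mtx{Z}}$ by $\Varo[\mtx{Y}]$. That monotonicity also makes $\phi$ and $\sigma_*^2$ monotone, so the fluctuation statistics of the proxy are bounded by $\phi(\mtx{Z})$ and $\sigma_*^2(\mtx{Z})$.

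The means differ, however: $\Expect\tilde{\mtx{Y}} - \Expect\mtx{Y} = \sum_i \Expect\tilde{\mtx{D}}_i$, which is not psd-ordered in general. This mean shift is the main obstacle, and it is where the extra term $\sqrt{2\sigma_*^2(\mtx{Z})}$ must come from.

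\textbf{Step 3: controlling the mean shift.} Since $\lambda_{\max}$ is monotone and subadditive, we only need $\lambda_{\max}(\sum_i\Expect\tilde{\mtx{D}}_i)\le\sqrt{2\sigma_*^2(\mtx{Z})}$. For a unit vector $\vct{u}$, write $p_i=\Prob{\lambda_{\max}(\mtx{D}_i)>R}$. The Cauchy--Schwarz estimate above, summed over $i$, gives
\[
\vct{u}^*\Big(\sum_i\Expect\tilde{\mtx{D}}_i\Big)\vct{u} \le \sum_i \sqrt{\Var[\vct{u}^*\mtx{W}_i\vct{u}]\,p_i} \le \sqrt{\sigma_*^2(\mtx{Z})\textstyle\sum_i p_i}.
\]
Next, $\sum_i p_i$ is bounded by a union-type argument using $R\ge 2\Expect M$. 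By independence, $\prod_i(1-p_i)=\Prob{M\le R}\ge 1/2$ by Markov's inequality, since $M\ge0$ after centering. From this, $\sum_i p_i\le \sum_i -\log(1-p_i)\le\log 2\le 2$. (If $M$ can be negative, apply Markov to $M_+$ and note that $\Expect M\ge0$ holds, because each $\lambda_{\max}$ of a centered matrix has nonnegative mean.)

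This yields the shift bound $\sqrt{2\sigma_*^2(\mtx{Z})}$. Combining it with the tail bound~\eqref{eqn:maxeig-tail} for $\tilde{\mtx{Y}}$ then gives the corollary.
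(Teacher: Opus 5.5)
Your truncation is the same as the paper's: in centered form it is $\tilde{\mtx{D}}_i = \mtx{D}_i \indicator\{\lambda_{\max}(\mtx{D}_i)\le R\}$. Steps 1 and 2 are correct, and so is the reduction of the mean shift to bounding $\sum_i p_i$.

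\textbf{The gap: how you justify $\sum_i p_i \le 2$.} The premise ``$M\ge 0$ after centering'' is false. A centered matrix can have a negative maximum eigenvalue with positive probability; a scalar Rademacher summand is an example. Consequently $R \ge 2\Expect M$ does not imply $\Prob{M\le R}\ge 1/2$. Take $d=1$, $n=2$, and $\mtx{D}_1,\mtx{D}_2$ iid, equal to $1$ with probability $0.4$ and to $-2/3$ with probability $0.6$. Then $\Expect M = 0.4$, so $R=0.8$ is admissible, yet $\Prob{M\le R} = 0.36$.

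Your parenthetical repair does not close this. Markov applied to $M_+$ requires $R\ge 2\Expect M_+$, and here $\Expect M_+ = 0.64 > \Expect M$. Knowing that $\Expect M\ge 0$ gives no control of $\Expect M_+$.

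\textbf{The missing idea: a leave-one-out bound.} This is how the paper uses the hypothesis on $R$. Set $M_{-i} \coloneqq \max_{j\neq i}\lambda_{\max}(\mtx{D}_j)$. Conditional on the other summands,
$\Expect_i M = \Expect_i \max\{\lambda_{\max}(\mtx{D}_i), M_{-i}\} \ge \max\{\Expect_i\lambda_{\max}(\mtx{D}_i), M_{-i}\} \ge \max\{0, M_{-i}\}$.
The last step holds because $\Expect\lambda_{\max}(\mtx{D}_i)\ge\lambda_{\max}(\Expect\mtx{D}_i)=0$. Hence $\Expect M \ge \Expect (M_{-i})_+$. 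Markov then gives $\prod_{j\neq i}(1-p_j) = \Prob{M_{-i}\le R}\ge 1/2$. If $R=0$, then $(M_{-i})_+=0$ almost surely and this bound is trivial. It follows that $\sum_{j\neq i}p_j\le\log 2$. Adding $p_i\le 1$ yields $\sum_j p_j\le 1+\log 2<2$, which is exactly what you need.

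\textbf{Comparison with the paper after the repair.} With this fix your argument is complete. It differs from the paper only in the mean-shift estimate. The paper writes $\Expect\mtx{Y}_R = \Expect[\indicator_{\set{B}}\sum_i\alpha_i^{-1}\mtx{W}_i]$ with $\alpha_i = \Prob{M_{-i}\le R}\ge 1/2$, then applies a single Cauchy--Schwarz using the orthogonality of the centered independent summands. You apply Cauchy--Schwarz termwise and then across the sum. Both routes rest on the same leave-one-out bound. Yours gives a marginally better constant, $\sqrt{1+\log 2}$ in place of $\sqrt{2}$.
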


The proof of this result depends on a truncation argument
that benefits from the monotonicity properties of Gaussian distributions.
See Proposition~\ref{prop:truncation} for the details.

\subsection{Comparison: Minimum eigenvalue of a self-adjoint matrix}

As an immediate corollary of Theorem~\ref{thm:maxeig-main}, we can derive
a comparison theorem for the \hilite{minimum eigenvalue}
of a random self-adjoint matrix.  This result is especially valuable
for treating a sum of independent random positive-semidefinite (\term{psd}) matrices,
such as a sample covariance matrix.

\begin{corollary}[Comparison: Minimum eigenvalue of an independent sum] \label{cor:mineig-main}
Consider an independent family $(\mtx{W}_1, \dots, \mtx{W}_n)$ of
random {self-adjoint} matrices with common dimension $d$
and with two finite moments.
Assume the random matrices satisfy the uniform \hilite{lower} bound
\[
- R_- \leq
\lambda_{\min}(\mtx{W}_i - \Expect \mtx{W}_i) %
\quad\text{for $i = 1, \dots, n$.}
\]
Form the sum and its Gaussian proxy:
\[
\mtx{Y} \coloneqq \sum_{i=1}^n \mtx{W}_i
\quad\text{and}\quad
\mtx{Z} \sim \normal(\Expect[\mtx{Y}], \Varo[\mtx{Y}]).
\]
Then the minimum eigenvalues admit the comparison
\[
\Expect \lambda_{\min}(\mtx{Y})
	\geq \Expect \lambda_{\min}(\mtx{Z})
		- \Big[ \sqrt{\left(\tfrac{1}{3} R_- \phi(\mtx{Z}) + \smash{\sigma_*^2}(\mtx{Z})\right) \cdot 2\log d}
		+ \tfrac{1}{3} R_- \log d \Big].
\]
Furthermore, for a parameter $s \geq 0$, the tail probability satisfies the bound
\begin{align*}
\Prob{ \lambda_{\min}(\mtx{Y}) \leq \Expect \lambda_{\min}(\mtx{Z}) - \Big[
	\sqrt{\left(\tfrac{1}{3} R_- \phi(\mtx{Z}) + \smash{\sigma_*^2}(\mtx{Z}) \right) \cdot 2s} + \tfrac{1}{3} R_- s \Big] }
	\leq d \cdot \econst^{-s}.
\end{align*}
The statistics $\phi(\mtx{Z})$ and $\sigma_*^2(\mtx{Z})$ are defined in Section~\ref{sec:norm-flux}.
\end{corollary}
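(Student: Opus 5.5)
We will derive the corollary from Theorem~\ref{thm:maxeig-main} by negation. Set $\mtx{W}_i' \coloneqq -\mtx{W}_i$ for each $i$, which again gives an independent family of random self-adjoint matrices of dimension $d$ with two finite moments. Since $\lambda_{\max}(-\mtx{A}) = -\lambda_{\min}(\mtx{A})$, the hypothesis becomes
\[
\lambda_{\max}(\mtx{W}_i' - \Expect \mtx{W}_i') = -\lambda_{\min}(\mtx{W}_i - \Expect \mtx{W}_i) \leq R_-,
\]
so hypothesis~\eqref{eqn:maxeig-R+} holds with $R_+ \coloneqq R_-$. The sum is $\mtx{Y}' = -\mtx{Y}$.

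The main point to check is the Gaussian proxy. If $\mtx{Z} \sim \normal(\Expect[\mtx{Y}], \Varo[\mtx{Y}])$, then $-\mtx{Z}$ is Gaussian with mean $-\Expect[\mtx{Y}] = \Expect[\mtx{Y}']$. Its covariance equals $\Varo[\mtx{Y}]$, because the variance operator is quadratic and therefore invariant under the sign change. Hence $\mtx{Z}' \coloneqq -\mtx{Z}$ is a valid Gaussian proxy for $\mtx{Y}'$. The weak variance is preserved: $\Var[\vct{u}^*(-\mtx{Z})\vct{u}] = \Var[\vct{u}^*\mtx{Z}\vct{u}]$, so $\sigma_*^2(\mtx{Z}') = \sigma_*^2(\mtx{Z})$.

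The matrix fluctuation needs one extra argument, and this is the step I expect to require care. By definition,
\[
\phi(\mtx{Z}') = \Expect \lambda_{\max}(-(\mtx{Z} - \Expect \mtx{Z})).
\]
The centered Gaussian matrix $\mtx{Z} - \Expect\mtx{Z}$ is symmetric in distribution, so it has the same law as $-(\mtx{Z} - \Expect\mtx{Z})$. Therefore $\phi(\mtx{Z}') = \phi(\mtx{Z})$. Without this symmetry, the corollary would have to be stated in terms of $\Expect \lambda_{\max}(\Expect\mtx{Z} - \mtx{Z})$ instead.

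Applying~\eqref{eqn:maxeig-expect} to $\mtx{Y}'$ gives
\[
-\Expect\lambda_{\min}(\mtx{Y}) = \Expect\lambda_{\max}(\mtx{Y}') \leq -\Expect\lambda_{\min}(\mtx{Z}) + \sqrt{\left(\tfrac{1}{3} R_- \phi(\mtx{Z}) + \sigma_*^2(\mtx{Z})\right) \cdot 2\log d} + \tfrac{1}{3} R_- \log d.
\]
Rearranging yields the expectation bound. In the same way, the event $\lambda_{\min}(\mtx{Y}) \leq \Expect\lambda_{\min}(\mtx{Z}) - [\cdots]$ coincides with $\lambda_{\max}(\mtx{Y}') \geq \Expect\lambda_{\max}(\mtx{Z}') + [\cdots]$. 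The tail bound~\eqref{eqn:maxeig-tail} then gives probability at most $d\,\econst^{-s}$.
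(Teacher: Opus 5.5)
Your proposal is correct and is the paper's argument: apply Theorem~\ref{thm:maxeig-main} to $-\mtx{Y}$ via the identity $\lambda_{\min}(\mtx{Y}) = -\lambda_{\max}(-\mtx{Y})$. You also supply the details the paper leaves implicit: $-\mtx{Z}$ is the Gaussian proxy for $-\mtx{Y}$, and both $\sigma_*^2$ and $\phi$ are unchanged by the sign flip, the latter because a centered Gaussian matrix is symmetric in distribution, as recorded in \eqref{eqn:mki-eig}.
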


\begin{proof}
Let $\mtx{Y}$ be the independent sum %
in Corollary~\ref{cor:mineig-main}.
Note the relation $\lambda_{\min}(\mtx{Y}) = - \lambda_{\max}(-\mtx{Y})$.
An application of Theorem~\ref{thm:maxeig-main} delivers probabilistic
bounds for the maximum eigenvalue of $-\mtx{Y}$.
\end{proof}

\subsubsection{Example: Rademacher covariance matrix}

This example highlights the benefit of %
matrix concentration inequalities that only require one-sided
control on the summands.
Consider iid Rademacher random vectors
$\vct{w}_i \sim \uniform\{\pm 1\}^d$ for $i = 1, \dots, n$,
and form the sample covariance matrix
\begin{equation} \label{eqn:scov-rad}
\widehat{\mtx{K}}_n \coloneqq \frac{1}{n} \sum_{i=1}^n \vct{w}_i \vct{w}_i^\transp
	\eqqcolon \sum_{i=1}^n \mtx{W}_i \in \Sym_d(\R).
\end{equation}
The random matrix is isotropic: $\Expect \widehat{\mtx{K}}_n = \Id_d$.
We seek probabilistic bounds on the minimum and maximum eigenvalues
of $\widehat{\mtx{K}}_n$ as a function of the number $n$ of samples.
For simplicity, assume that $n \geq d$, and define the reciprocal
of the oversampling ratio $\varrho \coloneqq d / n \in (0, 1]$.

The centered summands admit uniform lower and upper bounds:
\[
- n^{-1} = \lambda_{\min}(\mtx{W}_i - \Expect \mtx{W}_i )
	\leq \lambda_{\max}(\mtx{W}_i - \Expect \mtx{W}_i ) = n^{-1} (d-1) < \varrho.
\]
The lower bound statistic $R_- \coloneqq n^{-1}$ is quite small,
while the upper bound statistic $R_+ \coloneqq \varrho$ is much larger.
This distinction leads to divergent bounds for the minimum
and maximum eigenvalue of $\widehat{\mtx{K}}_n$.

In either case, the Gaussian proxy for $\widehat{\mtx{K}}_n$ is
the matrix
\[
\mtx{Z} \coloneqq \Id_d + \frac{1}{\sqrt{n}}
	\sum_{1 \leq j < k \leq d} \gamma_{jk} \cdot (\mathbf{E}_{jk} + \mathbf{E}_{kj}).
\]
Aside from the scaling and shift of the mean, this is the same as
the comparison model~\eqref{eqn:gauss-wigner} for the Wigner matrix.
Using Slepian's lemma again, we determine that the statistics satisfy
\[
\Expect \lambda_{\min}(\mtx{Z}) \geq 1 - 2 \sqrt{\varrho}
\quad\text{and}\quad
\phi(\mtx{Z}) \leq 2 \sqrt{\varrho}
\quad\text{and}\quad
\sigma_*^2(\mtx{Z}) < 2/n.
\]
The comparison theorem for the minimum eigenvalue (Corollary~\ref{cor:mineig-main})
yields
\begin{equation} \label{eqn:scov-min}
\begin{aligned}
\Expect \lambda_{\min}(\widehat{\mtx{K}}_n)
	&\geq \Expect \lambda_{\min}(\mtx{Z}) - \sqrt{ \left(\tfrac{1}{3} R_- \phi(\mtx{Z}) + \sigma_*^2(\mtx{Z})\right) \cdot 2 \log d} - \tfrac{1}{3} R_- \log d \\
	&\geq 1 - 2\sqrt{\varrho} - \mathcal{O}\left(\sqrt{n^{-1} \log d} + n^{-1} \log d \right)
	\to 1 - 2\sqrt{\varrho}.
\end{aligned}
\end{equation}
The limit is taken as $d, n \to \infty$ in fixed proportion $\varrho = d/n$.
Meanwhile, the classic Bai--Yin law~\cite[Thms.~1,2]{BY93:Limit-Smallest}
gives the sharp asymptotic:
\[
\lambda_{\min}(\widehat{\mtx{K}}_n)
	\to 1 - 2\sqrt{\varrho} + \varrho
	\quad\text{almost surely.}
\]
Thus, %
the minimum eigenvalue comparison
is correct to first order when $n \gg d$.

For the \hilite{maximum} eigenvalue of the sample covariance matrix~\eqref{eqn:scov-rad},
the comparison theorem (Theorem~\ref{thm:maxeig-main}) provides a less satisfactory estimate:
\begin{equation} \label{eqn:scov-max}
\begin{aligned}
\Expect \lambda_{\max}(\widehat{\mtx{K}}_n)
	&\leq \Expect \lambda_{\max}(\mtx{Z}) + \sqrt{ \left(\tfrac{1}{3} R_+ \phi(\mtx{Z}) + \sigma_*^2(\mtx{Z})\right) \cdot 2 \log d} + \tfrac{1}{3} R_+ \log d \\
	&\leq 1 + 2\sqrt{\varrho} + \mathcal{O}\left(\varrho^{3/4}\sqrt{\log d} + \varrho \log d \right).
\end{aligned}
\end{equation}
This bound is sharp when $n \gg d \log d$, but it overestimates the
maximum eigenvalue of $\widehat{\mtx{K}}_n$ in the proportional regime $n \asymp d$.
Section~\ref{sec:norm-rad} derives a better estimate for the maximum eigenvalue
by exploiting the internal structure of the summands $\mtx{W}_i$
composing $\widehat{\mtx{K}}_n$.

This analysis parallels the ``S-universality''
statement for covariance matrices established by
Brailovskaya \& van Handel~\cite[Thm.~3.21]{BvH24:Universality-Sharp}.
Their approach leads to a maximum eigenvalue bound similar with~\eqref{eqn:scov-max},
but their argument cannot reproduce the minimum eigenvalue bound~\eqref{eqn:scov-min}
because it requires uniform control on the \hilite{spectral norm}  of the summands.

\subsection{Comparison: Spectral norm of a rectangular matrix}

As a second corollary, we can extend Theorem~\ref{thm:maxeig-main} to obtain
a comparison theorem for the \hilite{spectral norm} of a random
\hilite{rectangular} matrix.  This is one of the most common use
cases for matrix concentration tools.

\begin{corollary}[Comparison: Spectral norm of an independent sum] \label{cor:norm-main}
Consider an independent family $(\mtx{W}_1, \dots, \mtx{W}_n)$ of random %
matrices with common dimension $d_1 \times d_2$.  Assume that the random matrices satisfy the uniform upper bound
\begin{equation*} %
\norm{ \mtx{W}_i - \Expect \mtx{W}_i } \leq R_{\pm}
\quad\text{for $i = 1, \dots, n$.}
\end{equation*}
Form the sum and its Gaussian proxy:
\[
\mtx{Y} \coloneqq \sum_{i=1}^n \mtx{W}_i
\quad\text{and}\quad
\mtx{Z} \sim \normal(\Expect[\mtx{Y}], \Varo[\mtx{Y}]).
\]
Then the spectral norms admit the comparison
\[
\Expect \norm{ \mtx{Y} }
	\leq \Expect \norm{\mtx{Z}}
		+ \sqrt{\left(\tfrac{1}{3} R_{\pm} \phi_{\pm}(\mtx{Z}) + \sigma_*^2(\mtx{Z})\right) \cdot 2\log(d_1 + d_2)}
		+ \tfrac{1}{3} R_{\pm} \log(d_1 + d_2).
\]
Furthermore, for a parameter $s \geq 0$, the tail probability satisfies the bound
\begin{align*}
\Prob{ \norm{\mtx{Y}} \geq \Expect \norm{\mtx{Z}}  +
	\sqrt{\left(\tfrac{1}{3} R_{\pm} \phi_{\pm}(\mtx{Z}) + \sigma_*^2(\mtx{Z})\right) \cdot 2s} + \tfrac{1}{3} R_{\pm} s }
	\leq (d_1 + d_2) \cdot \econst^{-s}.
\end{align*}
For a rectangular matrix, the statistics are defined as
\[
\phi_{\pm}(\mtx{Z}) \coloneqq \Expect \norm{ \mtx{Z} - \Expect \mtx{Z} }
\quad\text{and}\quad
\sigma_*^2(\mtx{Z}) \coloneqq \sup\nolimits_{\norm{\vct{u}_1} = \norm{\vct{u}_2} = 1}
\Var[ \Re (\vct{u}_1^* \mtx{Z} \vct{u}_2) ].
\]
\end{corollary}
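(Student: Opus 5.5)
The plan is to reduce the rectangular statement to Theorem~\ref{thm:maxeig-main} through the Hermitian dilation. For a $d_1 \times d_2$ matrix $\mtx{B}$, define the self-adjoint $(d_1+d_2)\times(d_1+d_2)$ matrix
\[
\mathcal{H}(\mtx{B}) \coloneqq \begin{bmatrix} \mtx{0} & \mtx{B} \\ \mtx{B}^* & \mtx{0} \end{bmatrix}.
\]
The map $\mathcal{H}$ is real-linear, and it satisfies $\lambda_{\max}(\mathcal{H}(\mtx{B})) = \norm{\mtx{B}}$ because the spectrum of $\mathcal{H}(\mtx{B})$ is $\{\pm \text{singular values of } \mtx{B}\}$, padded with zeros. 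I would apply Theorem~\ref{thm:maxeig-main} to the independent family $\mathcal{H}(\mtx{W}_1), \dots, \mathcal{H}(\mtx{W}_n)$, which has common dimension $d = d_1 + d_2$ and two finite moments. Its sum is $\mathcal{H}(\mtx{Y})$, with $\lambda_{\max}(\mathcal{H}(\mtx{Y})) = \norm{\mtx{Y}}$.

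\textbf{Step 1: hypotheses.} By linearity, $\lambda_{\max}(\mathcal{H}(\mtx{W}_i) - \Expect \mathcal{H}(\mtx{W}_i)) = \lambda_{\max}(\mathcal{H}(\mtx{W}_i - \Expect \mtx{W}_i)) = \norm{\mtx{W}_i - \Expect \mtx{W}_i} \leq R_\pm$. So the theorem applies with $R_+ = R_\pm$.

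\textbf{Step 2: identify the Gaussian proxy.} Since $\mathcal{H}$ is real-linear and $\mtx{Z}$ is Gaussian, $\mathcal{H}(\mtx{Z})$ is a Gaussian self-adjoint matrix. Its mean is $\mathcal{H}(\Expect \mtx{Y}) = \Expect \mathcal{H}(\mtx{Y})$, and its covariance coincides with that of $\mathcal{H}(\mtx{Y})$, because second moments of real-linear images depend only on the real second moments of $\mtx{Y}$ and $\mtx{Z}$, which agree. I am reading $\normal(\Expect[\mtx{Y}], \Varo[\mtx{Y}])$ for a rectangular matrix as matching the full real covariance structure. Hence $\mathcal{H}(\mtx{Z}) \sim \normal(\Expect[\mathcal{H}(\mtx{Y})], \Varo[\mathcal{H}(\mtx{Y})])$ is exactly the proxy in Theorem~\ref{thm:maxeig-main}. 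It follows that $\Expect \lambda_{\max}(\mathcal{H}(\mtx{Z})) = \Expect \norm{\mtx{Z}}$ and
\[
\phi(\mathcal{H}(\mtx{Z})) = \Expect \lambda_{\max}(\mathcal{H}(\mtx{Z} - \Expect \mtx{Z})) = \Expect \norm{\mtx{Z} - \Expect \mtx{Z}} = \phi_\pm(\mtx{Z}).
\]

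\textbf{Step 3: weak variance.} This is the only step requiring care. I need $\sigma_*^2(\mathcal{H}(\mtx{Z})) \leq \sigma_*^2(\mtx{Z})$ in the rectangular sense; equality is not required. Write a unit vector as $\vct{u} = (\vct{u}_1, \vct{u}_2)$ with $\norm{\vct{u}_1}^2 + \norm{\vct{u}_2}^2 = 1$. Then
\[
\vct{u}^* \mathcal{H}(\mtx{Z}) \vct{u} = 2 \Re(\vct{u}_1^* \mtx{Z} \vct{u}_2).
\]
Set $a = \norm{\vct{u}_1}$ and $b = \norm{\vct{u}_2}$, and normalize $\vct{u}_j$ to $\hat{\vct{u}}_j$. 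The variance becomes $4a^2b^2 \Var[\Re(\hat{\vct{u}}_1^* \mtx{Z} \hat{\vct{u}}_2)]$. Since $4a^2b^2 \leq (a^2+b^2)^2 = 1$, this is at most $\sigma_*^2(\mtx{Z})$. If $\vct{u}_1$ or $\vct{u}_2$ vanishes, the variance is zero. Taking the supremum gives the claim.

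\textbf{Step 4: conclude.} The error terms in \eqref{eqn:maxeig-expect} and \eqref{eqn:maxeig-tail} are increasing in $\sigma_*^2$. So substituting the identifications from Steps 2 and 3, together with $d = d_1 + d_2$, yields both displayed bounds of the corollary directly.
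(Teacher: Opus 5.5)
Your proposal is correct and follows the paper's proof: apply Theorem~\ref{thm:maxeig-main} to the self-adjoint dilation $\mathcal{H}(\mtx{Y})$ with $d = d_1 + d_2$, use $\lambda_{\max}(\mathcal{H}(\mtx{A})) = \norm{\mtx{A}}$, and identify the dilated Gaussian statistics as in Section~\ref{sec:gauss-rect}. Your Step~3 proves only the inequality $\sigma_*^2(\mathcal{H}(\mtx{Z})) \leq \sigma_*^2(\mtx{Z})$, which suffices; the paper records that equality holds, as the choice $\norm{\vct{u}_1} = \norm{\vct{u}_2} = 2^{-1/2}$ shows.
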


\begin{proof}
The \term{self-adjoint dilation}~\cite[Sec.~2.1.17]{Tro15:Introduction-Matrix}
is a real-linear operator defined on rectangular matrices:
\begin{equation} \label{eqn:sa-dilation}
\coll{H} : \F^{d_1 \times d_2} \to \Sym_{d_1+d_2}(\F)
\quad\text{where}\quad
\coll{H}(\mtx{A}) \coloneqq \begin{bmatrix} \mtx{0} & \mtx{A} \\ \mtx{A}^* & \mtx{0} \end{bmatrix}.
\end{equation}
Note that $\lambda_{\max}( \coll{H}(\mtx{A}) ) = - \lambda_{\min}( \coll{H}(\mtx{A})) = \norm{ \mtx{A} }$.
We can control the maximum eigenvalue of the random self-adjoint matrix $\coll{H}(\mtx{Y})$ by an application of
Theorem~\ref{thm:maxeig-main}.  This corollary is the result; see~Section~\ref{sec:gauss-rect} for more exposition.
\end{proof}

\begin{remark}[Universality]
Under the hypotheses of Corollary~\ref{cor:norm-main}, it is possible to prove an
exponential tail bound for %
$\abs{ \norm{\mtx{Y}} - \Expect \norm{\mtx{Z}} }$
by adapting the methods in this paper.
We omit this development because the result contains
additional error terms, and the argument is more involved.
\end{remark}

\subsubsection{Example: Spectral norm of a Rademacher matrix}
\label{sec:norm-rad}

For $n \geq d$,
consider the $d \times n$ rectangular Rademacher matrix
\begin{equation} \label{eqn:rad-rect}
\mtx{Y} \coloneqq \frac{1}{\sqrt{n}} \sum_{j=1}^{d} \sum_{k=1}^{n} \eps_{jk} \cdot \mathbf{E}_{jk} \in \R^{d \times n}
\quad\text{where $\eps_{jk} \sim \uniform\{\pm 1\}$ iid.}
\end{equation}
In the formula~\eqref{eqn:rad-rect}, each summand is centered,
and its spectral norm is bounded by $R_{\pm} \coloneqq n^{-1/2}$.
The Gaussian proxy for the random matrix $\mtx{Y}$ is
\[
\mtx{Z} \coloneqq \frac{1}{\sqrt{n}} \sum_{j=1}^{d} \sum_{k=1}^{n} \gamma_{jk} \cdot \mathbf{E}_{jk}  \in \R^{d \times n}
\quad\text{where $\gamma_{jk} \sim \normal_{\R}(0,1)$ iid.}
\]
By Chevet's theorem~\cite[Thm.~7.3.1]{Ver25:High-Dimensional-Probability},
the statistics satisfy
\[
\Expect \norm{\mtx{Z}} = \phi_{\pm}(\mtx{Z}) \leq 1 + \sqrt{\varrho}
\quad\text{and}\quad
\sigma_*^2(\mtx{Z}) = n^{-1}.
\]
As before, we have abbreviated $\varrho \coloneqq d/n \in (0, 1]$.
The spectral norm comparison theorem (Corollary~\ref{cor:norm-main}) yields the bound
\begin{align*}
\Expect \norm{\mtx{Y}}
	&\leq \Expect \norm{\mtx{Z}}
	+ \sqrt{\left(\tfrac{1}{3} R_{\pm} \phi_{\pm}(\mtx{Z}) + \sigma_*^2(\mtx{Z})\right) \cdot 2\log(d+n)}
	+ \tfrac{1}{3} R_{\pm} \log(d+n) \\
	&\leq 1 + \sqrt{\varrho} + \mathcal{O}\left( \sqrt{{n^{-1/2}} \log n} + n^{-1/2} \log n \right)
	\to 1 + \sqrt{\varrho}. 
\end{align*}
We take the limit as $d, n \to \infty$ in fixed proportion $\varrho = d/n$.

The same calculation also sheds light on the Rademacher sample covariance
matrix $\widehat{\mtx{K}}_n$, defined in~\eqref{eqn:scov-rad}.
Since $\widehat{\mtx{K}}_n = \mtx{YY}^*$, the last display suggests that
\[
\Expect \lambda_{\max}(\widehat{\mtx{K}}_n) = \Expect \norm{\mtx{Y}}^2
	\to (1 + \sqrt{\varrho})^2 = 1 + 2 \sqrt{\varrho} + \varrho.
\]
This heuristic can be fully justified using
the probability bound from Corollary~\ref{cor:norm-main}.
The Bai--Yin law~\cite[Thms.~1, 2]{BY93:Limit-Smallest} ensures
that this analysis is sharp.

A similar estimate follows from the ``Y-universality'' statement
of Brailovskaya \& van Handel~\cite[Thm.~3.24]{BvH24:Universality-Sharp}.
Their arguments also provide bounds for the \hilite{minimum}
singular value of the rectangular random matrix $\mtx{Y}$, which
imply the Bai--Yin law for the minimum eigenvalue
of $\widehat{\mtx{K}}_n$.  In contrast, our approach does not provide data
on the minimum singular value of a random rectangular matrix.

\subsection{Related work}
\label{sec:related}

This paper ties together several strands of research to achieve
new results for the independent sum model.

\subsubsection{Matrix concentration}
\label{sec:mtx-bernstein}

Most of the RMT literature focuses on highly symmetric random matrix models
(e.g., Wigner matrices and sample covariance matrices), and researchers
have achieved an exquisite understanding of these examples.
While these models are fascinating, they are insufficient for
some contemporary applications.

In recent years, the independent sum model~\eqref{eqn:indep-sum-main} has become
a standard template for describing random matrices because it negotiates
a balance between modeling power and probabilistic structure~\cite{Tro15:Introduction-Matrix}.
Matrix concentration inequalities provide good bounds for the extreme
eigenvalues of an independent sum in terms of accessible statistics
of the summands.
While this framework is powerful, researchers have continued searching
for refinements that can produce more precise bounds.

To elaborate, we present one of the fundamental matrix concentration inequalities.
Define the \term{matrix variance} statistic: %
\[
\sigma^2(\mtx{W}) \coloneqq \lnorm{ \Expect\big[ (\mtx{W} - \Expect \mtx{W})^2 \big] }
\quad\text{for random self-adjoint $\mtx{W}$.}
\]
The matrix variance can often be calculated by recourse to elementary
probability and linear algebra.
Under the same assumptions as Theorem~\ref{thm:maxeig-main},
the \term{matrix Bernstein inequality} yields an explicit bound
for the maximum eigenvalue of an independent sum $\mtx{Y}$:
\begin{equation} \label{eqn:mtx-bennett}
\Expect \lambda_{\max}(\mtx{Y})
	\leq \lambda_{\max}(\Expect \mtx{Y}) + \sqrt{2 \sigma^2(\mtx{Y}) \log d} %
	+ \tfrac{1}{3} R_+ \log d.
\end{equation}
This statement combines arguments from~\cite[Thm.~6.1]{Tro12:User-Friendly}
and~\cite[Thm.~6.1.1]{Tro15:Introduction-Matrix}.
Each of the terms in the matrix Bernstein inequality~\eqref{eqn:mtx-bennett}
is required to address worst-case examples~\cite[Sec.~6.1.2]{Tro15:Introduction-Matrix}.
On the other hand, the logarithmic factor attached to the matrix variance
$\sigma^2(\mtx{Y})$ can lead to pessimistic bounds for other instances.

The comparison theorem (Theorem~\ref{thm:maxeig-main}) is always sharper
than the matrix Bernstein inequality~\eqref{eqn:mtx-bennett},
modulo small constant factors.
This claim follows from the matrix Khinchin inequality~\eqref{eqn:mki-eig}
and the property~\eqref{eqn:weak-var-mtx-var} that the matrix variance dominates
the weak variance.

Combining Theorem~\ref{thm:maxeig-main} with the matrix Khinchin inequalities~\eqref{eqn:mki-eig} and~\eqref{eqn:mki-2nd}, we can also derive a \term{second-order matrix Bernstein inequality} that involves the interaction energy statistic $w(\mtx{Y})$,
defined in~\eqref{eqn:interaction-energy}:
\begin{multline} \label{eqn:bernstein-2nd}
\Expect \lambda_{\max}(\mtx{Y})
	\leq \lambda_{\max}(\Expect \mtx{Y})
	+ 2 \sigma(\mtx{Y}) \\
	 + R_+ \log d
	+ \mathrm{Const} \cdot \left[ \sigma^2(\mtx{Y}) (R_+^2 + w(\mtx{Y})) \log^3 d \right]^{1/4}.
\end{multline}
The inequality~\eqref{eqn:bernstein-2nd} frequently allows us to reach
optimal or near-optimal estimates because it relocates a logarithmic
factor in~\eqref{eqn:mtx-bennett} from $\sigma(\mtx{Y})$ to an error term.
This bound provides a variant of a result from
Brailovskaya \& van Handel~\cite[Cor.~2.17]{BvH24:Universality-Sharp}.

\subsubsection{Universality for independent sums}

There is a vast body of work showing that the spectral statistics of a random matrix
depend primarily on low-order moments of its entries, rather than 
their precise distribution.  This phenomenon is called \term{universality}.
One %
strategy for proving universality results is to compare
the original random matrix model with a matching Gaussian distribution,
often by means of Lindeberg's method (Section~\ref{sec:lindeberg}).
Most research on universality treats highly symmetric random matrix models.
See~\cite{Tao19:Least-Singular} for an overview.

Recently, Brailovskaya \& van Handel~\cite{BvH24:Universality-Sharp} developed
a suite of universality theorems for the independent sum model~\eqref{eqn:indep-sum-main}.
Their main results concern an independent sum of bounded random matrices.
They argue that the spectral statistics of the sum are comparable
with the spectral statistics of the Gaussian matrix
whose first- and second-order moments match the sum,
as in~\eqref{eqn:gauss-proxy-main}.
Their results address both the mean spectral distribution
and the support of the spectrum.
The proof involves Stein's method, cumulant expansions, M{\"o}bius inversion,
matrix inequalities, and concentration tools.
See~\cite{Tro26:Universality-Laws} for an alternative approach.

We can make a direct comparison between Theorem~\ref{thm:maxeig-main}
and one of the results from~\cite{BvH24:Universality-Sharp}.
Consider an independent sum of bounded random self-adjoint matrices
with dimension $d$:
\[
\mtx{Y} \coloneqq \sum_{i=1}^n \mtx{W}_i
\quad\text{where}\quad
\norm{ \mtx{W}_i - \Expect \mtx{W}_i } \leq R_{\pm}.
\]
Let $\mtx{Z} \sim \normal(\Expect[\mtx{Y}], \Varo[\mtx{Y}])$ be the Gaussian proxy.
Brailovskaya \& van Handel~\cite[Cor.~2.7]{BvH24:Universality-Sharp} obtain a
two-sided comparison for the maximum eigenvalues:
\begin{equation} \label{eqn:maxeig-bvh}
\labs{ \Expect \lambda_{\max}(\mtx{Y}) - \Expect \lambda_{\max}(\mtx{Z}) }
	\lesssim \left[ R_{\pm} \sigma^2(\mtx{Z}) \log^2 d \right]^{1/3} + R_{\pm} \log d + \sqrt{ \sigma_*^2(\mtx{Z}) \log d}. 
\end{equation}
The relation $\lesssim$ suppresses a universal constant.

The new bound~\eqref{eqn:maxeig-expect} from~Theorem~\ref{thm:maxeig-main} is always sharper than the
\hilite{upper} bound on $\Expect \lambda_{\max}(\mtx{Y})$ from~\eqref{eqn:maxeig-bvh}.
Indeed, our result only requires one-sided control on the summands; cf.~\eqref{eqn:maxeig-R+},
and it removes some of the logarithmic factors. %
Section~\ref{sec:applications} develops some applications
that benefit from these improvements.

It is unlikely that the arguments from~\cite{BvH24:Universality-Sharp}
can deliver tail bounds under one-sided control of the summands.
In addition, our proof of~\eqref{eqn:maxeig-expect} is
conceptually simpler and far shorter than the proof of~\eqref{eqn:maxeig-bvh}.
On the other hand, our approach struggles to reach
two-sided bounds like~\eqref{eqn:maxeig-bvh}, and it does not provide any information
about the mean spectral distribution or the support
of the spectrum.

\subsubsection{Lindeberg's method}
\label{sec:lindeberg}

To prove universality theorems about the spectral statistics of a random matrix,
one of the standard strategies is
the modern revision of Lindeberg's exchange method~\cite{Cha06:Generalization-Lindeberg,Tao19:Least-Singular}.
Lindeberg's method allows us to compare smooth functions
of two sequences $(W_1, \dots, W_n)$
and $(X_1, \dots, X_n)$ of independent random variables
that have matching first- and second-order moments.
The argument interpolates between the two sequences by replacing
one random variable at a time, controlling the discrepancy with a
Taylor expansion.
Papers that employ Lindeberg's method for RMT
include~\cite{KM11:Applications-Lindeberg,TV10:Random-Matrices,TV11:Random-Matrices,OT17:Universality-Laws}.

In particular, Chen et al.~\cite{CDB+24:Sparse-Random} applied Lindeberg's method
to obtain a suite of universality theorems for a sum of independent random matrices.
Their results are incomparable with the bounds from~\cite{BvH24:Universality-Sharp},
so the two strands of research are complementary.
The proof of our main result (Theorem~\ref{thm:maxeig-main}) also depends on
Lindeberg's method, but our implementation hinges on a novel application
of Stahl's theorem~\cite{Sta13:Proof-BMV}.
As compared with existing work,
our analysis produces stronger one-sided bounds for the extreme eigenvalues
of an independent sum.

\subsubsection{Stahl's theorem and comparison for psd matrices}

Stahl's theorem~\cite{Sta13:Proof-BMV} asserts that the trace exponential function
on a line can be represented as the Laplace transform of a positive measure.
More precisely, for fixed self-adjoint matrices $\mtx{A}$ and $\mtx{H}$,
\[
\trace \econst^{\mtx{A} + t \mtx{H}}
	= \int_{\R} \econst^{\lambda t} \idiff{\nu}(\lambda)
	\quad\text{for $t \in \R$,}
\]
where $\nu$ is a positive Borel measure on the real line that depends on the matrices
$\mtx{A}$ and $\mtx{H}$.  This paper develops a new method for exploiting
Stahl's theorem to study random matrices.

To the best of our knowledge, the companion paper~\cite{Tro26:Comparison-Theorems-Minimum}
contains the only existing application of Stahl's theorem in RMT.
The main result of the companion paper concerns the minimum eigenvalue
of a sum of \hilite{iid} random \hilite{psd} matrices.

\begin{fact}[Comparison: Sum of iid psd matrices] \label{fact:mineig-psd}
Let $\mtx{W}$ be a random \hilite{psd} matrix with dimension $d$
and with two finite moments.
Draw a \hilite{Gaussian self-adjoint} matrix $\mtx{X}$ with dimension $d$
whose first- and second-order moments satisfy
\begin{equation} \label{eqn:iid-intro-match}
\Expect \mtx{X} = \Expect \mtx{W}
\quad\text{and}\quad
\Var[ \ip{\mtx{X}}{\mtx{M}} ] = \Expect[ \ip{\mtx{W}}{\mtx{M}}^2 ]
\quad\text{for all self-adjoint $\mtx{M}$.}
\end{equation}
For a natural number $n \in \N$, define the random matrices
\begin{equation} \label{eqn:iid-comp}
\begin{aligned}
\mtx{Y} &\coloneqq %
\sum_{i=1}^n \mtx{W}_i
&&\text{where $\mtx{W}_i \sim \mtx{W}$ iid;} \\
\mtx{Z} &\coloneqq %
\sum_{i=1}^n \mtx{X}_i
&&\text{where $\mtx{Z}_i \sim \mtx{X}$ iid.}%
\end{aligned}
\end{equation}
Then the minimum eigenvalues admit the comparison
\begin{align} \label{eqn:intro-thm-iid-expect}
\Expect \lambda_{\min}(\mtx{Y})
	&\geq %
	\Expect \lambda_{\min}( \mtx{Z} ) - \sqrt{{\sigma_*^2}(\mtx{Z}) \cdot 2\log(2d)}. %
\end{align}
Furthermore, for all $s \geq 0$, the tail probability satisfies the bound
\begin{align} \label{eqn:intro-thm-iid-tail}
\Prob{ \lambda_{\min}(\mtx{Y}) \leq \Expect \lambda_{\min}( \mtx{Z} ) - \sqrt{ {\sigma_*^2}(\mtx{Z}) \cdot 2s} }
	&\leq 2d \cdot \econst^{- s}. %
\end{align}
The weak variance $\sigma_*^2(\mtx{Z})$ is defined in~\eqref{eqn:weak-var-gauss}.
\end{fact}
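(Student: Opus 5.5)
The plan is to compare trace moment generating functions and then apply the matrix Laplace transform method; the statement is the \emph{Fact} above, restated from the companion paper. By homogeneity, work with the lower tail through the negative exponent. For $\theta > 0$, the goal is the bound
\[
\Expect \trace \econst^{-\theta \mtx{Y}} \leq \Expect \trace \econst^{-\theta \mtx{Z}}.
\]
Given this, the Laplace transform method gives
\[
\Prob{\lambda_{\min}(\mtx{Y}) \leq -t} \leq \inf_{\theta>0} \econst^{-\theta t} \Expect \trace \econst^{-\theta\mtx{Y}}.
\]
It then remains to bound the Gaussian trace mgf by $d \cdot \econst^{-\theta \Expect \lambda_{\min}(\mtx{Z}) + \theta^2 \sigma_*^2(\mtx{Z})/2}$. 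For this, use $\trace \econst^{-\theta \mtx{Z}} \leq d\, \econst^{-\theta \lambda_{\min}(\mtx{Z})}$ together with Gaussian concentration of the Lipschitz function $\lambda_{\min}(\mtx{Z})$. Its Lipschitz constant with respect to the Gaussian coordinates is exactly $\sigma_*(\mtx{Z})$, because the gradient of $\lambda_{\min}$ is $\vct{u}\vct{u}^*$ for the bottom eigenvector $\vct{u}$. Optimizing over $\theta$ yields~\eqref{eqn:intro-thm-iid-tail}. The expectation bound~\eqref{eqn:intro-thm-iid-expect} follows from the same mgf bound via Jensen's inequality, $\econst^{-\theta\Expect\lambda_{\min}(\mtx{Y})} \leq \Expect \trace \econst^{-\theta\mtx{Y}}$. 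The factor $2d$ rather than $d$ gives slack; this suggests the companion paper handles some symmetrization or the real/complex case with a doubled dimension, and I would track that constant only at the end.

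For the mgf comparison, use Lindeberg replacement, swapping one $\mtx{W}_i$ for an $\mtx{X}_i$ at a time. Condition on all the other summands and call their sum $\mtx{A}$. It then suffices to show, for fixed self-adjoint $\mtx{A}$, that
\[
\Expect \trace \econst^{\mtx{A} - \theta \mtx{W}} \leq \Expect \trace \econst^{\mtx{A} - \theta\mtx{X}}.
\]
Here Stahl's theorem enters. For fixed $\mtx{H}$, it gives the representation $t \mapsto \trace \econst^{\mtx{A}+t\mtx{H}} = \int \econst^{\lambda t}\,\diff\nu_{\mtx{A},\mtx{H}}(\lambda)$ with $\nu \geq 0$. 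The psd hypothesis is used by writing $\mtx{W} = r\,\mtx{H}$, where $r = \norm{\mtx{W}}$ and $\mtx{H} = \mtx{W}/r$ is psd. The relevant measure is then supported on $\lambda \leq$ something controlled, and the exponent $-\theta r \lambda$ must be compared with a quadratic.

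The key elementary inequality is $\econst^{-x} \leq 1 - x + x^2/2$ for $x \geq 0$. On the Gaussian side, $\Expect \econst^{-x} = \econst^{-\mu + v/2} \geq 1 - \mu + v/2$ holds in the scalar case. With this, the left side is bounded by the trace of $\econst^{\mtx{A}}$ plus first- and second-order terms matching~\eqref{eqn:iid-intro-match}. The right side, by convexity of the Gaussian mgf, dominates the same expansion.

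The main obstacle is making the second-order matching work at the matrix level. The Gaussian side is not a function of a single line $\mtx{A}+t\mtx{H}$, and the second moment condition is stated via $\Expect \ip{\mtx{W}}{\mtx{M}}^2$, which is a full second moment rather than a variance. I would first try to express both sides through the map $\mtx{H} \mapsto \trace \econst^{\mtx{A}+\mtx{H}}$. The sign argument should only need the second derivative of $\trace\econst^{\mtx{A}+\mtx{H}}$ in direction $\mtx{H}$ to be nonnegative, which is a consequence of Stahl's positivity. Then I would verify that the Gaussian, having matched noncentral second moments, dominates via Jensen's inequality applied along the interpolation $\mtx{A} - \theta(\sqrt{1-s}\,\mtx{W}' + \sqrt{s}\,\mtx{X})$.
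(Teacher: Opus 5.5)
\textbf{There is a genuine gap: the Gaussian half of your one-summand comparison is never established, and the tools you cite cannot establish it.}

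The rest of your scaffolding is fine: the reduction to $\Expect\trace\econst^{-\theta\mtx{Y}}$, the Gaussian concentration step, and the optimization over $\theta$. The psd half of the one-summand comparison is also fine. The direction $-\theta\mtx{W}$ is negative semidefinite, so Stahl's measure for the line $t\mapsto\mtx{A}-t\theta\mtx{W}$ lives on $(-\infty,0]$. The inequality $\econst^{x}\le 1+x+x^2/2$ there gives
\[
\Expect\trace\econst^{\mtx{A}-\theta\mtx{W}}\le\trace\econst^{\mtx{A}}-\theta\,\Diff F(\mtx{A})[\mtx{m}]+\tfrac{\theta^2}{2}\,\Expect\Diff^2F(\mtx{A})[\mtx{W},\mtx{W}],
\]
where $F=\trace\exp$ and $\mtx{m}=\Expect\mtx{W}$.

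The gap is that you need the non-centered Gaussian $\mtx{X}\sim\normal(\mtx{m},\mathsf{V})$, with $\mathsf{V}(\mtx{M})=\Expect\ip{\mtx{W}}{\mtx{M}}^2$, to dominate this same quadratic.
\begin{itemize}
\item The mechanism of Proposition~\ref{prop:compare-one} needs $\mtx{X}$ centered, so that the cubic term vanishes by symmetry. Here $\Expect[\mtx{X}\otimes\mtx{X}]=\mtx{m}\otimes\mtx{m}+\Expect[\mtx{W}\otimes\mtx{W}]$, and $-\tfrac{\theta^3}{6}\Expect\Diff^3F(\mtx{A})[\mtx{X},\mtx{X},\mtx{X}]\neq 0$. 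Already for $d=1$ this term is strictly negative, and every truncated Taylor lower bound then tends to $-\infty$ as $\theta$ grows.
\item The scalar proof works only because the whole series sums to $\econst^{-\theta m+\theta^2 v/2}\ge 1-\theta m+\theta^2 v/2$. The matrix analogue is convexity of $s\mapsto\Expect\trace\exp(\mtx{A}-s\theta\mtx{m}-\sqrt{s}\,\theta\mtx{G})$ with $\mtx{G}\sim\normal(\mtx{0},\mathsf{V})$. Its second derivative averages $\mathcal{L}^2F$, where $\mathcal{L}=-\theta\,\partial_{\mtx{m}}+\tfrac{\theta^2}{2}\Delta_{\mathsf{V}}$. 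These are mixed derivatives in several directions at once, and Stahl's theorem, which handles $\trace\econst^{\mtx{A}+t\mtx{H}}$ one line at a time, does not sign them.
\item Your path $\sqrt{1-s}\,\mtx{W}'+\sqrt{s}\,\mtx{X}$ does not repair this. Its mean is $(\sqrt{1-s}+\sqrt{s})\mtx{m}$, so it preserves neither first nor second moments. Jensen alone only yields $\trace\econst^{\mtx{A}-\theta\mtx{m}}$, which loses the $\theta^2$ term.
\end{itemize}
Note also that if the one-summand inequality held, your argument would give $d$ rather than $2d$.

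\textbf{The missing idea is Poissonization followed by continuous interpolation, which is the route the paper indicates.} It is also where the factor $2d$ comes from, not symmetrization or dimension doubling.

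Take $N\sim\textsc{poisson}(n)$ independent of the summands, and set $\mtx{Y}_N\coloneqq\sum_{i\le N}\mtx{W}_i$. Since the summands are psd, $\mtx{Y}_N\psdle\mtx{Y}$ on $\{N\le n\}$, and $\Probe\{N\le n\}\ge 1/2$. Hence $\Expect\trace\econst^{-\theta\mtx{Y}}\le 2\,\Expect\trace\econst^{-\theta\mtx{Y}_N}$. The compound Poisson sum has variance function $n\,\Expect\ip{\mtx{W}}{\mtx{M}}^2$, which explains why the Gaussian model uses raw second moments.

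Now let $\mtx{P}_t$ be the compound Poisson process with rate $n$ and jumps distributed as $\mtx{W}$. Let $\mtx{Z}_s\sim\normal(sn\mtx{m},sn\mathsf{V})$ be independent of it, and set $\Phi(t)=\Expect\trace\exp(-\theta(\mtx{P}_t+\mtx{Z}_{1-t}))$. A short Gaussian increment contributes only drift and diffusion, so both generators act at the same state $\mtx{C}$. With $\mtx{W}'\sim\mtx{W}$ independent,
\[
\Phi'(t)=n\,\Expect\big[F(\mtx{C}-\theta\mtx{W}')-F(\mtx{C})+\theta\,\Diff F(\mtx{C})[\mtx{W}']-\tfrac{\theta^2}{2}\,\Diff^2F(\mtx{C})[\mtx{W}',\mtx{W}']\big]\le 0,
\]
by exactly your Stahl inequality. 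Hence $\Expect\trace\econst^{-\theta\mtx{Y}_N}=\Phi(1)\le\Phi(0)=\Expect\trace\econst^{-\theta\mtx{Z}}$. Your remaining steps then give $2d\,\econst^{-s}$ and $\log(2d)$. The point is that the Gaussian side never has to be expanded beyond second order.
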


The proof of Fact~\ref{fact:mineig-psd} involves continuous interpolation,
exchangeable pairs, Stahl's theorem, and Poissonization techniques.
That argument is distant from the proof of Theorem~\ref{thm:maxeig-main}.

Let us emphasize that the Gaussian comparison model~\eqref{eqn:iid-comp}
differs from the model~\eqref{eqn:gauss-proxy-main} we are using in this paper.
The variability of~\eqref{eqn:iid-comp} is larger because
it reflects the raw second moment of the summand rather than its variance.
In compensation, the bounds~\eqref{eqn:intro-thm-iid-expect} and~\eqref{eqn:intro-thm-iid-tail}
do not suffer extra terms involving the lower bound statistic
$R_-$ of the centered summands.

As tools for studying the minimum eigenvalue,
Fact~\ref{fact:mineig-psd} and Corollary~\ref{cor:mineig-main}
are incomparable with each other.
There are some examples where the two theorems
lead to similar outcomes, such as the minimum
eigenvalue bound~\eqref{eqn:scov-min} for a
Rademacher sample covariance matrix.
The older result (Fact~\ref{fact:mineig-psd}) gives
better results in certain cases, such as
sampling from a complex projective 2-design~\cite[Thm.~4.2]{Tro26:Comparison-Theorems-Minimum}.
The new result (Corollary~\ref{cor:mineig-main}) gives
better results for other problems, such as
the application to the sparse randomized
dimension reduction (Section~\ref{sec:sparsestack-appl}).

\subsection{Notation}
\label{sec:notation}

For the most part, we use standard conventions from linear algebra and probability,
and the notation is similar with the companion paper~\cite{Tro26:Comparison-Theorems-Minimum}.
Nonlinear functions bind before the trace and the expectation.
We may omit brackets enclosing an argument when they are unnecessary.
The symbols $\vee$ and $\wedge$ refer to the infix maximum and minimum.
 
Our results hold in both the real ($\F = \R$) and complex ($\F = \C$) setting.
For $d \in \N$, the set $\M_d(\F)$ is the linear
space of $d \times d$ square matrices with entries in the field $\F$.
The set $\Sym_d(\F)$ denotes the real-linear space of \term{self-adjoint} %
matrices in $\M_d(\F)$.
The function $\trace[\cdot]$ returns the (unnormalized) trace of a square matrix.
We write $\ip{ \cdot }{ \cdot }$ for the standard trace inner product.

The symbol $\Id_d$ refers to the identity matrix with dimension $d$;
we sometimes omit the dimensional subscript.
The symbol $\mathbf{E}_{ij}$
denotes the standard basis matrix with a one in
the $(i, j)$ position and zeros elsewhere.
These matrices' dimensions are determined by context.

The symbol ${}^\transp$ denotes the transpose of a vector or matrix,
while ${}^*$ denotes the conjugate transpose. %
The norm $\norm{\cdot}$ refers the $\ell_2$ norm on vectors
or the associated $\ell_2$ operator norm on matrices,
which we call the \term{spectral norm}.
The Frobenius norm $\fnorm{\cdot}$ also appears regularly.

The functions $\lambda_{\max}$ and $\lambda_{\min}$ compute the largest and
smallest eigenvalues of a self-adjoint matrix.  The function
$\lambda_j$ returns the $j$th largest eigenvalue.
The psd partial order $\mtx{A} \psdge \mtx{H}$ on self-adjoint matrices
means that $\mtx{A} - \mtx{H}$ is psd.
We also use the symbol $\psdge$ for the partial order on variance functions
defined in~\eqref{eqn:varfn-order}.

The operator $\Probe(\cdot)$ returns the probability of an event,
while the operator $\Expect[\cdot]$ computes the expectation of
a random variable.  The symbol $\sim$ means ``has the distribution''.
Small capitals denote named distributions.  In particular,
$\normal_{\R}(m, c)$ is the Gaussian distribution on the real line
with expectation $m$ and variance $c$.
The abbreviation \term{iid} means ``independent
and identically distributed''.

The notation $\mathcal{O}(\cdot)$ refers to a function
growing no faster than its argument, and $o(\cdot)$
refers to a function tending to zero with its argument.
In heuristic discussions, we may employ orders $\lesssim$ and $\gtrsim$
and $\asymp$ that
suppress universal constants, while %
$\ll$ means ``much less than''.

\section{Gaussian random matrices}
\label{sec:gaussian}

This section briefly introduces the theory
of Gaussian random matrices.  The presentation is adapted
from the companion paper~\cite[Sec.~3]{Tro26:Comparison-Theorems-Minimum}.

\subsection{First- and second-order moments}
\label{sec:moments}

We work primarily in the real linear space $\Sym_d(\F)$ of self-adjoint
matrices with dimension $d$, whose entries lie in the field $\F = \R$ or $\F = \C$.
This linear space is equipped with the real-linear trace inner product
\[
\ip{ \mtx{A} }{ \mtx{B} } \coloneqq \trace[ \mtx{AB} ]
\quad\text{for $\mtx{A}, \mtx{B} \in \Sym_d(\F)$.}
\]
This inner product reports real values, even for self-adjoint matrices
with complex entries.

Consider a random matrix $\mtx{W}$ taking values in $\Sym_d(\F)$,
and assume that $\mtx{W}$ is square-integrable: $\Expect \norm{\mtx{W}}^2 < + \infty$.
The most basic statistic is its first moment: $\Expect[ \mtx{W} ] \in \Sym_d(\F)$.
The expectation is computed entrywise, and it produces a deterministic matrix.

To describe the fluctuations of the random matrix $\mtx{W}$, we introduce
the \term{variance function}:
\begin{align*}
\Varo[\mtx{W}] : \Sym_d(\F) \to \R_+
\quad\text{where}\quad
\Varo[\mtx{W}] : \mtx{M} \mapsto \Var[ \ip{\mtx{W}}{\mtx{M}} ].
\end{align*}
Equivalently, for each $\mtx{M} \in \Sym_d(\F)$,
\[
\Varo[\mtx{W}](\mtx{M}) = \Expect{} \ip{\mtx{W} - \Expect \mtx{W}}{\mtx{M}}^2
	= \Expect{} \ip{\mtx{W}}{\mtx{M}}^2
	- \ip{\Expect \mtx{W}}{\mtx{M}}^2.
\]
Recall that the square binds before the expectation.
This function collects the second-order central moments
of the one-dimensional linear marginals of the random matrix.

The variance function $\Varo[\mtx{W}]$ %
is a psd quadratic form on $\Sym_d(\F)$.  %
Variance functions
stand in one-to-one correspondence with psd quadratic forms
(i.e., covariance operators of self-adjoint random matrices).
For two variance functions $\set{V}, \set{V}'$, we define the
partial order
\begin{equation} \label{eqn:varfn-order}
\set{V} \psdle \set{V}'
\quad\text{if and only if}\quad
\set{V}(\mtx{M}) \leq \set{V}'(\mtx{M})
\quad\text{for all $\mtx{M} \in \Sym_d(\F)$.}
\end{equation}
If we regard $\set{V}, \set{V}'$ as psd quadratic forms, %
then $\psdle$ agrees with the semidefinite order.

First- and second-order moments cooperate with independent sums.  Consider an independent
family $(\mtx{W}_1, \dots, \mtx{W}_n)$ of square-integrable random matrices
in $\Sym_d(\F)$.  Then
\begin{equation*}
\Expect\left[ \sum_{i=1}^n \mtx{W}_i \right] = \sum_{i=1}^n \Expect[ \mtx{W}_i ]
\quad\text{and}\quad
\Varo\left[ \sum_{i=1}^n \mtx{W}_i \right] %
	= \sum_{i=1}^n \Varo[ \mtx{W}_i ]. %
\end{equation*}
That is to say, we can express the first- and second-order moments of an independent sum
directly in terms of the moments of the summands.

The second-order moments of a random self-adjoint matrix can
also be extracted from the second-order moment tensor.

\begin{fact}[Covariance tensor] \label{fact:moment-tensor}
Suppose that $\mtx{W}, \mtx{X} \in \Sym_d(\F)$ are random \hilite{self-adjoint} matrices
with the same expectation: $\Expect \mtx{W} = \Expect\mtx{X}$. %
Then
\[
\Varo[\mtx{W}] = \Varo[\mtx{X}]
\quad\text{if and only if}\quad
\Expect[ \mtx{W} \otimes \mtx{W} ] = \Expect[ \mtx{X} \otimes \mtx{X} ].
\]
We employ the Kronecker product as a concrete model for the tensor product operator.
\end{fact}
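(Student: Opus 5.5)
The plan is to reduce Fact~\ref{fact:moment-tensor} to a polarization identity that expresses the Kronecker second moment through the quadratic form $\mtx{M} \mapsto \Expect \ip{\mtx{W}}{\mtx{M}}^2$. Because $\Expect \mtx{W} = \Expect \mtx{X}$, the subtracted term $\ip{\Expect \mtx{W}}{\mtx{M}}^2$ is the same for both matrices. Hence $\Varo[\mtx{W}] = \Varo[\mtx{X}]$ holds if and only if the raw second-moment forms agree:
\[
\Expect \ip{\mtx{W}}{\mtx{M}}^2 = \Expect \ip{\mtx{X}}{\mtx{M}}^2 \quad\text{for all $\mtx{M} \in \Sym_d(\F)$.}
\]
It therefore suffices to show that these forms agree exactly when $\Expect[\mtx{W} \otimes \mtx{W}] = \Expect[\mtx{X} \otimes \mtx{X}]$.

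\emph{Tensor equality implies form equality.} The identity
\[
\ip{\mtx{W}}{\mtx{M}}^2 = \trace[(\mtx{W} \otimes \mtx{W})(\mtx{M} \otimes \mtx{M})]
\]
is linear in $\mtx{W} \otimes \mtx{W}$. Taking expectations, which is permitted by square integrability, shows that equal tensors give equal forms.

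\emph{Form equality implies tensor equality.} I polarize: $\mtx{M} \mapsto \Expect \ip{\mtx{W}}{\mtx{M}}^2$ is a real quadratic form on $\Sym_d(\F)$. Its values therefore determine the symmetric bilinear form
\[
(\mtx{M}, \mtx{N}) \mapsto \Expect[\ip{\mtx{W}}{\mtx{M}}\ip{\mtx{W}}{\mtx{N}}] = \trace\big[\Expect[\mtx{W} \otimes \mtx{W}](\mtx{M} \otimes \mtx{N})\big]
\quad\text{for all $\mtx{M}, \mtx{N} \in \Sym_d(\F)$.}
\]
What remains is to show that the products $\mtx{M} \otimes \mtx{N}$ with $\mtx{M}, \mtx{N}$ self-adjoint span all of $\M_{d^2}(\F)$ over $\F$. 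Then the trace pairing determines the matrix $\Expect[\mtx{W} \otimes \mtx{W}]$.

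\emph{Spanning step.} For $\F = \R$, the symmetric matrices do not span $\M_d(\R)$, so I use a structural observation instead. The matrix $\mtx{W} \otimes \mtx{W}$ has entries $W_{ij} W_{kl}$, and since $\mtx{W}$ is symmetric, $W_{ij} = W_{ji}$ and $W_{kl} = W_{lk}$. The same symmetries hold for $\mtx{X}$. Consequently the entries $\Expect[W_{ij} W_{kl}]$ are determined by pairings against $\mtx{M} \otimes \mtx{N}$ with $\mtx{M} = \mathbf{E}_{ij} + \mathbf{E}_{ji}$ and $\mtx{N} = \mathbf{E}_{kl} + \mathbf{E}_{lk}$, where the diagonal cases carry a factor of $2$ that can be divided out. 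So for $\F = \R$ I compute the entries directly.

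For $\F = \C$, the self-adjoint matrices span $\M_d(\C)$ over $\C$, since $\mtx{A} = \tfrac12(\mtx{A}+\mtx{A}^*) + \iunit \cdot \tfrac{1}{2\iunit}(\mtx{A}-\mtx{A}^*)$. By complex bilinearity of $(\mtx{M}, \mtx{N}) \mapsto \trace[\mtx{T}(\mtx{M} \otimes \mtx{N})]$, pairings against self-adjoint $\mtx{M}, \mtx{N}$ then determine the pairing against every $\mathbf{E}_{ji} \otimes \mathbf{E}_{lk}$. That pairing is the entry $T_{(ik),(jl)}$.

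The main obstacle is this spanning step, specifically handling the real and complex cases correctly. Everything else is routine bookkeeping.
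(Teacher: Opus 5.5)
Your proof is correct and complete. The paper gives no proof of Fact~\ref{fact:moment-tensor}. It states the result as a standard fact and adds only the remark that self-adjointness is essential in the complex case, so there is no competing argument to compare against.

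Your proof uses self-adjointness exactly where that remark says it is needed. In both cases, self-adjointness makes $\ip{\mtx{W}}{\mtx{M}} = \trace[\mtx{W}\mtx{M}]$ real. The variance is then the expectation of an honest square, and the identity $\ip{\mtx{W}}{\mtx{M}}^2 = \trace[(\mtx{W}\otimes\mtx{W})(\mtx{M}\otimes\mtx{M})]$ ties it to $\Expect[\mtx{W}\otimes\mtx{W}]$. In the real case, symmetry is also what lets pairings against $\mtx{M}\otimes\mtx{N}$ with $\mtx{M},\mtx{N}\in\Sym_d(\R)$ recover every entry $\Expect[W_{ij}W_{kl}]$.

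Two cosmetic points. First, the factor of $2$ in the real case is not special to the diagonal. Since $\trace[\mtx{W}(\mathbf{E}_{ij}+\mathbf{E}_{ji})] = 2W_{ij}$ for all $i,j$, every such pairing equals $4\Expect[W_{ij}W_{kl}]$. Second, your sentence saying that products of self-adjoint matrices span $\M_{d^2}(\F)$ over $\F$ is true only for $\F=\C$. Your following paragraph already handles this, but the sentence should be restricted accordingly.
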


In the complex setting, Fact~\ref{fact:moment-tensor} depends on the assumption
that the random matrices are self-adjoint.  For the most part, we find it
more transparent to work with variance functions rather than tensor products.

\subsection{Gaussian random matrices}

Consider a random self-adjoint matrix $\mtx{X}$ taking values in $\Sym_d(\F)$.
We say that $\mtx{X}$ is \term{Gaussian} when the one-dimensional linear marginal
$\ip{\mtx{X}}{\mtx{M}}$ follows a real-valued Gaussian distribution
\hilite{for every} matrix $\mtx{M} \in \Sym_d(\F)$.
A Gaussian random matrix is fully characterized by its expectation and its variance function.
Suppose that $\mtx{\Delta} \in \Sym_d(\F)$ is an arbitrary deterministic matrix, and
let $\mathsf{V} : \Sym_d(\F) \to \R_+$ be any psd quadratic form.
We write $\normal( \mtx{\Delta}, \mathsf{V} )$ for the (unique) Gaussian distribution
on $\Sym_d(\F)$ with expectation $\mtx{\Delta}$ and variance function $\mathsf{V}$.

Two random matrices $\mtx{X}, \mtx{Z} \in \Sym_d(\F)$ are \term{jointly Gaussian}
when their direct sum $\mtx{X} \oplus \mtx{Z} \in \Sym_{2d}(\F)$ is also a Gaussian matrix. %
Gaussian matrices enjoy a fundamental \term{stability property}.
If $\mtx{X}$ and $\mtx{Z}$ are jointly Gaussian and $\alpha, \beta \in \R$,
then the linear combination $\alpha \mtx{X} + \beta \mtx{Z}$ remains a Gaussian random matrix.
Jointly Gaussian matrices $\mtx{X}$ and $\mtx{Z}$ also have the special feature that uncorrelation
is equivalent to statistical independence:
\[
\Varo[ \mtx{X} + \mtx{Z} ] = \Varo[\mtx{X}] + \Varo[\mtx{Z}]
	\quad\text{if and only if}\quad
	\mtx{X} \indep \mtx{Z}.
\]
The symbol $\indep$ means \term{statistically independent}.
To be clear, the entries \hilite{within} each matrix $\mtx{X}$ or $\mtx{Z}$ can be statistically dependent,
but the two matrices are statistically independent from each other.
These statements extend from two Gaussian matrices to several Gaussian matrices
in the obvious way.

\subsection{Monotonicity}

Gaussian matrices exhibit a beautiful monotonicity property.
We will employ this result to argue that many statistics of
a Gaussian matrix increase with the variance function.
See~\cite[Prop.~3.2]{Tro26:Comparison-Theorems-Minimum} for a short proof.

\begin{fact}[Gaussian monotonicity] \label{fact:gauss-mono}
Consider two \hilite{Gaussian} self-adjoint matrices $\mtx{X} \sim \normal(\mtx{\Delta}, \mathsf{V})$
and $\mtx{X}' \sim \normal(\mtx{\Delta}, \mathsf{V}')$ that share the same expectation $\mtx{\Delta}$.
Assume that the variance functions admit the
comparison $\set{V} \psdle \set{V}'$.
Then, for each convex function $f : \Sym_d(\F) \to \R$,
\[
\Expect f(\mtx{X}) \leq \Expect f(\mtx{X}').
\]
\end{fact}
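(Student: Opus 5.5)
The statement to prove is Fact~\ref{fact:gauss-mono}: if two Gaussian matrices share a mean and their variance functions are ordered, then convex functions have ordered expectations. The plan is to realize the larger Gaussian matrix as the smaller one plus an independent centered Gaussian perturbation, and then apply Jensen's inequality conditionally.

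\textbf{Step 1: build the perturbation.} Since $\set{V} \psdle \set{V}'$, the difference $\set{V}' - \set{V}$ is nonnegative on every $\mtx{M} \in \Sym_d(\F)$. It is also a quadratic form on the real-linear space $\Sym_d(\F)$, being a difference of quadratic forms, so it is a psd quadratic form. The correspondence between psd quadratic forms and Gaussian distributions then lets us draw a centered Gaussian matrix $\mtx{G} \sim \normal(\mtx{0}, \set{V}' - \set{V})$ independent of $\mtx{X}$.

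\textbf{Step 2: identify the law of the sum.} Set $\mtx{X}'' \coloneqq \mtx{X} + \mtx{G}$. By independence, $\mtx{X}$ and $\mtx{G}$ are jointly Gaussian, so the stability property makes $\mtx{X}''$ Gaussian. Its expectation is $\mtx{\Delta}$. Since variance functions add over independent summands, its variance function is $\set{V} + (\set{V}' - \set{V}) = \set{V}'$. A Gaussian matrix is determined by its mean and variance function, so $\mtx{X}'' \sim \mtx{X}'$ in distribution, and hence $\Expect f(\mtx{X}') = \Expect f(\mtx{X} + \mtx{G})$.

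\textbf{Step 3: apply conditional Jensen.} Condition on $\mtx{X}$ and average over $\mtx{G}$, which has mean zero. Convexity of $f$ gives
\[
\Expect\big[ f(\mtx{X} + \mtx{G}) \condbar \mtx{X} \big] \geq f\big( \mtx{X} + \Expect \mtx{G} \big) = f(\mtx{X}).
\]
Taking the outer expectation yields $\Expect f(\mtx{X}') \geq \Expect f(\mtx{X})$, which is the claim.

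\textbf{Main obstacle: integrability.} A convex $f$ on a finite-dimensional space is continuous, and it is bounded below by an affine function $\ell$. Write $f = \ell + (f - \ell)$ with $f - \ell \geq 0$. The affine part $\ell$ is integrable under any Gaussian law, and its expectation is the same under $\mtx{X}$ and $\mtx{X}'$ because the two matrices share the mean $\mtx{\Delta}$. The remaining part $f - \ell$ is nonnegative and convex, so conditional Jensen and Tonelli apply to it without integrability assumptions. The inequality therefore holds in $(-\infty, +\infty]$, with the convention that it is trivial when $\Expect f(\mtx{X}') = +\infty$.
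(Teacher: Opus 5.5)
Your proposal is correct and follows essentially the same route as the short proof the paper defers to in the companion paper: realize $\mtx{X}'$ in distribution as $\mtx{X} + \mtx{G}$ with $\mtx{G} \sim \normal(\mtx{0}, \set{V}' - \set{V})$ independent of $\mtx{X}$, then apply Jensen's inequality conditionally on $\mtx{X}$. Your affine-minorant decomposition, which handles possibly infinite expectations, is a sound extra layer of care that the short proof leaves implicit.
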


In particular, the matrix fluctuation $\phi(\mtx{X}) \coloneqq \Expect \lambda_{\max}(\mtx{X} - \Expect \mtx{X})$
and norm fluctuation $\phi_{\pm}(\mtx{X}) \coloneqq \Expect \norm{\mtx{X} - \Expect \mtx{X}}$
increase with the variance function of the Gaussian matrix $\mtx{X}$.

\subsection{Variance statistics}

We can capture information about the spectral features of a Gaussian matrix
by means of summary statistics that are derived from the variance function.
In this section, both $\mtx{X}, \mtx{X}'$ are Gaussian matrices
taking values in $\Sym_d(\F)$.

\subsubsection{The matrix variance}

Define the \term{matrix variance}:
\[
\sigma^2(\mtx{X}) \coloneqq \lnorm{ \Expect\big[ (\mtx{X} - \Expect \mtx{X})^2 \big] }.
\]
The statistic $\sigma^2(\mtx{X})$ increases with the variance function $\Varo[\mtx{X}]$. %
That is,
\[
\Varo[\mtx{X}] \psdle \Varo[\mtx{X}']
\quad\text{implies}\quad
\sigma^2(\mtx{X}) \leq \sigma^2(\mtx{X}').
\]
For a Gaussian self-adjoint matrix $\mtx{X}$,
the matrix variance provides some control on the matrix fluctuation
$\phi(\mtx{X})$:
\begin{equation} \label{eqn:mki-eig}
0 \leq \phi(\mtx{X}) \coloneqq \Expect \lambda_{\max}(\mtx{X} - \Expect \mtx{X})
	= - \Expect \lambda_{\min}(\mtx{X} - \Expect \mtx{X})
	\leq \sqrt{2 \sigma^2(\mtx{X}) \log d}.
\end{equation}
The lower bound is Jensen's inequality, while the upper bound appears in~\cite[Thm.~4.6.1]{Tro15:Introduction-Matrix}.
For the norm fluctuation $\phi_{\pm}(\mtx{X})$, we can achieve a two-sided comparison:
\begin{equation}  \label{eqn:mki-norm}
\sqrt{(2/\pi) \sigma^2(\mtx{X})} \leq \phi_{\pm}(\mtx{X}) \coloneqq \Expect \norm{\mtx{X} - \Expect \mtx{X}} \leq \sqrt{2 \sigma^2(\mtx{X}) \log(2d)}.
\end{equation}
The lower bound follows from~\cite[Cor.~3]{LO99:Gaussian-Measures},
and the upper bound is drawn from~\cite[Thm.~4.1.1]{Tro15:Introduction-Matrix}.
These results are closely related to the \term{matrix Khinchin inequality}.
Each of the inequalities in~\eqref{eqn:mki-eig} and~\eqref{eqn:mki-norm}
is saturated by particular Gaussian matrices as $d \to \infty$.

\subsubsection{The weak variance}

Next, define the \term{weak variance}:
\begin{equation} \label{eqn:weak-var}
\sigma_*^2(\mtx{X}) \coloneqq \sup\nolimits_{\norm{\vct{u}} = 1} \Var[ \vct{u}^* \mtx{X} \vct{u} ]
	= \sup\nolimits_{\norm{\mtx{M}}_* = 1} \Var[ \mtx{X} ] (\mtx{M}).
\end{equation}
We have written $\norm{\cdot}_*$ for the Schatten 1-norm.  From the latter representation,
it is obvious that the relation
\[
\Varo[\mtx{X}] \psdle \Varo[\mtx{X}']
\quad\text{implies}\quad
\sigma_*^2(\mtx{X}) \leq \sigma_*^2(\mtx{X}').
\]
The weak variance and the matrix variance are comparable
up to a dimensional factor: %
\begin{equation} \label{eqn:weak-var-mtx-var}
\sigma_*^2(\mtx{X}) \leq \sigma^2(\mtx{X}) \leq d \cdot \sigma_*^2(\mtx{X}).
\end{equation}
Both inequalities in~\eqref{eqn:weak-var-mtx-var} are attained by particular examples.
The weak variance controls the concentration of the eigenvalues
of a Gaussian random matrix; see~\cite[Fact 3.4]{Tro26:Comparison-Theorems-Minimum}
for a proof. %

\begin{fact}[Gaussian concentration] \label{fact:gauss-conc}
Let $f : \Sym_d(\F) \to \R$ be a function that is 1-Lipschitz
with respect to the spectral norm:
\[
\abs{ f(\mtx{A}) - f(\mtx{B}) } \leq \norm{ \mtx{A} - \mtx{B} }
\quad\text{for all $\mtx{A}, \mtx{B} \in \Sym_d(\F)$.}
\]
For a Gaussian self-adjoint matrix $\mtx{X}$ taking values in $\Sym_d(\F)$,
\[
\Expect \econst^{ f(\mtx{X}) - \Expect f(\mtx{X}) }
	\leq \econst^{ \sigma_*^2(\mtx{X}) / 2}.
\]
In particular, this result applies to $f = \lambda_{\max}$
and $f = \lambda_{\min}$ and $f = \norm{\cdot}$.
\end{fact}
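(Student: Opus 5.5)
The plan is to reduce this to the classical Gaussian concentration inequality for Lipschitz functions of a standard Gaussian vector. The only real work is computing the Euclidean Lipschitz constant of the induced function, and we will show it is at most $\sigma_*(\mtx{X})$.

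First, parametrize the Gaussian matrix. The space $\Sym_d(\F)$ is a finite-dimensional real inner product space, and $\Varo[\mtx{X}]$ is a psd quadratic form on it. So we can write
\[
\mtx{X} = \mtx{\Delta} + \sum_{k=1}^N g_k \mtx{A}_k,
\]
where $\mtx{\Delta} = \Expect \mtx{X}$, the $g_k$ are iid $\normal_{\R}(0,1)$ variables, and $\mtx{A}_k \in \Sym_d(\F)$ are deterministic. To obtain this, take a square root of the covariance operator and expand it in an orthonormal basis. With this representation the variance function reads
\[
\Varo[\mtx{X}](\mtx{M}) = \sum_k \ip{\mtx{A}_k}{\mtx{M}}^2 .
\]
Define $F : \R^N \to \R$ by $F(\vct{g}) \coloneqq f(\mtx{\Delta} + \sum_k g_k \mtx{A}_k)$, so that $f(\mtx{X}) = F(\vct{g})$ in distribution.

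Second, bound the Lipschitz constant of $F$ with respect to the Euclidean norm. For $\vct{g}, \vct{h} \in \R^N$, the hypothesis on $f$ gives
\[
\abs{F(\vct{g}) - F(\vct{h})} \leq \lnorm{ \sum_k (g_k - h_k) \mtx{A}_k }.
\]
Set $\vct{a} = \vct{g} - \vct{h}$. Since $\sum_k a_k \mtx{A}_k$ is self-adjoint, its spectral norm equals $\sup_{\norm{\vct{u}}=1} \abs{\vct{u}^* (\sum_k a_k \mtx{A}_k) \vct{u}}$. For each unit vector $\vct{u}$, Cauchy--Schwarz gives
\[
\labs{ \sum_k a_k \ip{\mtx{A}_k}{\vct{u}\vct{u}^*} } \leq \norm{\vct{a}} \Big( \sum_k \ip{\mtx{A}_k}{\vct{u}\vct{u}^*}^2 \Big)^{1/2} = \norm{\vct{a}} \sqrt{\Var[\vct{u}^*\mtx{X}\vct{u}]} \leq \norm{\vct{a}}\, \sigma_*(\mtx{X}).
\]
Hence $F$ is $\sigma_*(\mtx{X})$-Lipschitz. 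This argument works verbatim in the complex case because the inner product is real-linear and $\vct{u}^*\mtx{B}\vct{u}$ is real for self-adjoint $\mtx{B}$.

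Third, invoke the sharp Gaussian concentration inequality for an $L$-Lipschitz function $F$ of a standard Gaussian vector:
\[
\Expect \econst^{\theta (F - \Expect F)} \leq \econst^{\theta^2 L^2/2}.
\]
This follows from the Gaussian logarithmic Sobolev inequality via Herbst's argument. One first proves it for smooth $F$ with $\norm{\nabla F} \leq L$, then extends to Lipschitz $F$ by convolution with a small Gaussian mollifier and dominated convergence. Taking $\theta = 1$ and $L = \sigma_*(\mtx{X})$ yields the claim.

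Finally, $\lambda_{\max}$, $\lambda_{\min}$, and $\norm{\cdot}$ are 1-Lipschitz in the spectral norm by Weyl's inequality, so they are covered. The step I expect to need the most care is the constant. A Maurey--Pisier interpolation argument would only give $\pi^2/8$ in place of $1/2$, so the log-Sobolev/Herbst route is the right choice. Provided that route is used, the Lipschitz computation above is the only genuinely matrix-specific ingredient.
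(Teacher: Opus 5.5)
Your proof is correct. The paper gives no in-line argument for this fact and defers to the companion paper's Fact~3.4; your route is the standard argument for it. You realize $\mtx{X}$ as an affine image of a standard Gaussian vector, show the induced function is $\sigma_*(\mtx{X})$-Lipschitz via $\norm{\mtx{B}} = \sup_{\norm{\vct{u}}=1} \abs{\vct{u}^* \mtx{B} \vct{u}}$ and Cauchy--Schwarz, and apply the sharp Gaussian concentration inequality (log-Sobolev plus Herbst), which yields exactly the constant $1/2$ in $\econst^{\sigma_*^2(\mtx{X})/2}$.
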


\subsubsection{The interaction energy}

Last, we define the \term{interaction energy}:
\begin{equation} \label{eqn:interaction-energy}
w(\mtx{X}) \coloneqq \sup\nolimits_{\fnorm{\mtx{M}} = 1} \Varo[\mtx{X}](\mtx{M}).
\end{equation}
Equivalently, the interaction energy is the maximum eigenvalue
of the variance function, interpreted as a psd quadratic form (i.e., covariance operator).
Roughly speaking, the interaction energy $w(\mtx{X})$ is \hilite{small} when
two independent copies of the Gaussian matrix $\mtx{X}$
are ``highly noncommutative'' in an appropriate sense~\cite[Sec.~1.4.1]{BBvH23:Matrix-Concentration}.

From the formulation~\eqref{eqn:interaction-energy},
we recognize that the interaction energy is monotone
with respect to the variance function:
\[
\Varo[\mtx{X}] \psdle \Varo[\mtx{X}']
	\quad\text{implies}\quad w(\mtx{X}) \leq w(\mtx{X}').
\]
It is also clear that the interaction energy compares with
the weak variance, up to a dimensional factor:
\[
\sigma_*^2(\mtx{X}) \leq w(\mtx{X}) \leq d \cdot \sigma_*^2(\mtx{X}).
\]
While the interaction energy $w(\mtx{X})$
is often smaller than the matrix variance $\sigma^2(\mtx{X})$,
these statistics are not comparable.

The interaction energy arises in the theory of intrinsic freeness,
and it sometimes allows for improvements over the matrix Khinchin
inequality~\eqref{eqn:mki-norm}.  Indeed, for a Gaussian matrix,
\begin{equation} \label{eqn:mki-2nd}
\phi_{\pm}(\mtx{X}) \coloneqq \Expect \norm{ \mtx{X} - \Expect \mtx{X} }
	\leq 2 \sqrt{\sigma^2(\mtx{X})} + \mathrm{Const} \cdot \left[ \sigma^2(\mtx{X}) w(\mtx{X}) \log^3 d \right]^{1/4}.
\end{equation}
This result is extracted from~\cite[Cor.~2.2 and Lem.~2.5]{BBvH23:Matrix-Concentration}.
The same bound~\eqref{eqn:mki-2nd} evidently holds for the matrix fluctuation $\phi(\mtx{X})$.
The result~\eqref{eqn:mki-2nd} is one of the primary tools that
Brailovskaya \& van Handel~\cite{BvH24:Universality-Sharp} exploit to
pass from universality theorems to matrix concentration inequalities.
This paper only treats simple examples that obviate
the need for~\eqref{eqn:mki-2nd}.

\subsection{Examples: GOE and GUE}

In this section, we introduce the two most fundamental Gaussian random matrices,
and we provide bounds for their statistics.

\subsubsection{Gaussian orthogonal ensemble}
\label{sec:goe}

For each dimension $d$, we can construct a member of 
the Gaussian orthogonal ensemble (GOE):
\[
\mtx{X}_{\goe} \coloneqq \frac{1}{\sqrt{2}} \sum\nolimits_{j, k = 1}^d \gamma_{jk} \cdot (\mathbf{E}_{jk} + \mathbf{E}_{kj})
\in \Sym_d(\R)
\quad\text{where $\gamma_{jk} \sim \normal_{\R}(0,1)$ iid.}
\]
This is a real symmetric Gaussian matrix.  It has iid $\normal_{\R}(0,1)$ entries
above the diagonal, while its diagonal entries are iid $\normal_{\R}(0,2)$.
Its expectation and variance function are
\[
\Expect \mtx{X}_{\goe} = \mtx{0}
\quad\text{and}\quad
\Varo[\mtx{X}_{\goe}](\mtx{M}) = 2 \fnormsq{\mtx{M}}
\quad\text{for $\mtx{M} \in \Sym_d(\R)$.}
\]
The GOE distribution is invariant under orthogonal conjugation:
\[
\mtx{X}_{\goe} \sim \mtx{Q}^\transp \mtx{X}_{\goe} \mtx{Q}
\quad\text{for all orthogonal $\mtx{Q} \in \M_d(\R)$.}
\]
As a consequence, we can define a GOE matrix acting on any subspace of $\R^d$.
It follows from~\cite[Cor.~6.38]{AS17:Alice-Bob} that
the matrix fluctuation statistic of the GOE matrix satisfies the bounds
\[
\phi(\mtx{X}_{\goe}) \leq \phi_{\pm}(\mtx{X}_{\goe}) \leq 2 \sqrt{d}.
\]
The matrix variance, weak variance, and interaction energy admit the formulas
\[
\sigma^2(\mtx{X}_{\goe}) = d + 1
\quad\text{and}\quad
\sigma_*^2(\mtx{X}_{\goe}) = 2
\quad\text{and}\quad
w(\mtx{X}_{\goe}) = 2.
\]
These expressions follow from direct calculation.

\subsubsection{Gaussian unitary ensemble}
\label{sec:gue}

For each dimension $d$, we can also construct a member of
the Gaussian unitary ensemble (GUE):
\begin{equation*}
\mtx{X}_{\gue} \coloneqq \frac{1}{2} \sum\nolimits_{j,k = 1}^d \big[\gamma_{jk} \cdot (\mathbf{E}_{jk} + \mathbf{E}_{kj})
	+ \iunit \gamma_{jk}' \cdot (\mathbf{E}_{jk} - \mathbf{E}_{kj}) \big]
	\in \Sym_d(\C),
\end{equation*}
where $\gamma_{jk}, \gamma_{jk}' \sim \normal_{\R}(0,1)$ iid
and $\iunit$ denotes the imaginary unit.
This is a complex Hermitian Gaussian matrix.  It has iid complex standard normal entries
above its diagonal and iid real standard normal entries on the diagonal.
The expectation and variance function are
\[
\Expect \mtx{X}_{\gue}  = \mtx{0}
\quad\text{and}\quad
\Varo[\mtx{X}_{\gue}](\mtx{M}) = \fnormsq{\mtx{M}}
\quad\text{for $\mtx{M} \in \Sym_d(\C)$.}
\]
The GUE distribution is invariant under unitary conjugation:
\[
\mtx{X}_{\gue} \sim \mtx{Q}^* \mtx{X}_{\gue} \mtx{Q}
\quad\text{for all unitary $\mtx{Q} \in \M_d(\C)$.}
\]
According to~\cite[Prop.~6.24]{AS17:Alice-Bob},
the matrix fluctuation statistic of the GUE matrix satisfies %
\[
\phi(\mtx{X}_{\gue}) \leq \phi_{\pm}(\mtx{X}_{\gue}) \leq 2 \sqrt{d}.
\]
The matrix variance, weak variance, and interaction energy admit the formulas
\[
\sigma^2(\mtx{X}_{\gue}) = d
\quad\text{and}\quad
\sigma_*^2(\mtx{X}_{\gue}) = 1
\quad\text{and}\quad
w(\mtx{X}_{\gue}) = 1.
\]
These expressions follow from direct calculation.

\subsection{Rectangular Gaussian matrices}
\label{sec:gauss-rect}

Corollary~\ref{cor:norm-main} invokes the self-adjoint dilation operator~\eqref{eqn:sa-dilation}
as a formal device to extend the comparison theorem (Theorem~\ref{thm:maxeig-main}) %
to rectangular matrices.
This section explains how to use the self-adjoint dilation to
adapt the definitions of a Gaussian matrix and its statistics to the rectangular case.

\subsubsection{First- and second-order moments}

In this section, we work in the matrix space $\F^{d_1 \times d_2}$,
treated as a \hilite{real} linear space, equipped with the real trace inner product
\[
\ip{\mtx{A}}{\mtx{B}}_{\Re} \coloneqq \Re \trace[\mtx{A}^* \mtx{B}]
\quad\text{for all $\mtx{A},\mtx{B} \in \F^{d_1 \times d_2}$.}
\]
For a rectangular random matrix $\mtx{W} \in \F^{d_1 \times d_2}$, we compute
its expectation entrywise: $\Expect[\mtx{W}] \in \F^{d_1 \times d_2}$.
The variance function of a rectangular matrix is defined as
\[
\Varo[\mtx{W}](\mtx{M})
	 \coloneqq \Var[ \ip{\mtx{W}}{\mtx{M}}_{\Re} ]
	= \tfrac{1}{4} \cdot \Varo[ \coll{H}(\mtx{W})](\coll{H}(\mtx{M}))
\quad\text{for all $\mtx{M} \in \F^{d_1 \times d_2}$.}
\]
Up to a constant, this is simply the variance function of the self-adjoint dilation~\eqref{eqn:sa-dilation}.
We have chosen the scaling so that the definitions of the two trace inner products are consistent.

A rectangular matrix $\mtx{X} \in \F^{d_1 \times d_2}$ is \term{Gaussian}
if and only if the one-dimensional linear marginal $\ip{\mtx{X}}{\mtx{M}}_{\Re}$ follows
a real-valued Gaussian distribution for every $\mtx{M} \in \F^{d_1 \times d_2}$.
A rectangular Gaussian matrix is also characterized by its expectation
and variance function.  For a fixed matrix $\mtx{\Delta} \in \F^{d_1 \times d_2}$
and a psd quadratic form $\mathsf{V} : \F^{d_1 \times d_2} \to \R_+$,
we write $\normal(\mtx{\Delta}, \mathsf{V})$ for the (unique) Gaussian distribution
on $\F^{d_1 \times d_2}$ with these statistics.

\subsubsection{Variance statistics}

Likewise, we define the other statistics of a rectangular matrix using the self-adjoint dilation.
Each of these statistics can also be expressed directly in terms of the original matrix.
Let $\mtx{X} \in \F^{d_1 \times d_2}$ be a rectangular Gaussian matrix.
The \term{rectangular matrix norm fluctuation} statistic is
\[
\phi_{\pm}(\mtx{X}) \coloneqq \phi(\coll{H}(\mtx{X}))
	= \Expect \norm{ \mtx{X} - \Expect \mtx{X} }.
\]
The \term{rectangular matrix variance} is
\[
\sigma^2(\mtx{X}) \coloneqq \sigma^2(\coll{H}(\mtx{X}))
	= \lnorm{ \Expect \big[ ( \mtx{X} - \Expect \mtx{X})( \mtx{X} - \Expect \mtx{X})^* \big] }
	\vee \lnorm{ \Expect\big[ ( \mtx{X} - \Expect \mtx{X})^*( \mtx{X} - \Expect \mtx{X}) \big] }.
\]
The \term{rectangular weak variance} is
\[
\sigma_*^2(\mtx{X}) \coloneqq \sigma_*^2(\coll{H}(\mtx{X}))
	= \sup\nolimits_{\norm{\vct{u}_1} = \norm{\vct{u}_2} = 1} \Var[ \Re(\vct{u}_1^* \mtx{X} \vct{u}_2) ]
	= \sup\nolimits_{\norm{\mtx{M}}_* = 1} \Varo[\mtx{X}](\mtx{M}).
\]
The \term{rectangular interaction energy} is
\[
w(\mtx{X}) \coloneqq w(\coll{H}(\mtx{X}))
	= 2 \sup\nolimits_{\fnorm{\mtx{M}} = 1} \Varo[ \mtx{X} ](\mtx{M}).
\]
All in all, the rectangular statistics are similar with the self-adjoint statistics,
but we must pay attention to constant factors.

\subsubsection{Example: A Gaussian matrix with iid entries}

As a short example, let us consider a real-valued $d_1 \times d_2$ Gaussian matrix with iid
standardized entries:
\[
\mtx{X}_{\rect} \coloneqq \sum_{j=1}^{d_1} \sum_{k=1}^{d_2} \gamma_{jk} \cdot \mathbf{E}_{jk}
	\in \R^{d_1 \times d_2}
	\quad\text{where $\gamma_{jk} \sim \normal_{\R}(0,1)$ iid.}
\]
The expectation and variance function are
\[
\Expect \mtx{X}_{\rect} = \mtx{0}
\quad\text{and}\quad
\Varo[\mtx{X}_{\rect}](\mtx{M}) = \fnormsq{\mtx{M}} 
\quad\text{for all $\mtx{M} \in \R^{d_1 \times d_2}$.}
\]
A classic bound due to Chevet~\cite[Thm.~7.3.1]{Ver25:High-Dimensional-Probability} %
controls the matrix norm fluctuation:
\[
\phi_{\pm}(\mtx{X}_{\rect}) \coloneqq \Expect \norm{ \mtx{X}_{\rect} - \Expect \mtx{X}_{\rect} }
	\leq \sqrt{d_1} + \sqrt{d_2}.
\]
Meanwhile, the variance statistics satisfy
\[
\sigma^2(\mtx{X}_{\rect}) = d_1 \vee d_2
\quad\text{and}\quad
\sigma_*^2(\mtx{X}_{\rect}) = 1
\quad\text{and}\quad
w(\mtx{X}_{\rect}) = 2.
\]
These expression follow from direct calculations.

\section{Applications}
\label{sec:applications}

The comparison theorems from the introduction provide tools
for analyzing various random matrices that arise in applications.
To showcase the maximum eigenvalue comparison (Theorem~\ref{thm:maxeig-main}),
we develop a simple example in spectral graph theory (Section~\ref{sec:rdm-graph-appl}).
Afterward, we use the spectral norm comparison (Corollary~\ref{cor:norm-main})
to obtain new guarantees for a problem in quantum information theory
(Section~\ref{sec:quantum-appl}) that involves exponentially large random matrices.
Last, we employ the minimum eigenvalue comparison
(Corollary~\ref{cor:mineig-main}) to treat problems in statistics (Section~\ref{sec:scov-appl})
and numerical linear algebra (Section~\ref{sec:sparsestack-appl});
these results rely %
on the fact that the comparison
only requires one-sided control on the summands.

\subsection{Random regular graph: Second eigenvalue bound}
\label{sec:rdm-graph-appl}

As a warmup, we treat a basic problem from spectral graph
theory involving the expansion properties of random regular graphs.
In this setting, the benefits of the comparison theorem (Theorem~\ref{thm:maxeig-main})
are limited, but similar arguments apply to more challenging problems
involving random Cayley graphs; see~\cite[Sec.~3.2.3]{BvH24:Universality-Sharp}.

Heuristically, an \term{expander} is a graph where each
set of vertices is adjacent to a much larger set of vertices.
Beginning from any vertex, we can reach
any other vertex after a modest number of steps.
Expanders are the subject
of a rich literature in group theory, combinatorics,
theoretical computer science, and other fields.
See~\cite{HLW06:Expander-Graphs,Kow19:Introduction-Expander}
for an overview.

One of the key examples of an expander is a random regular graph.
To be concrete, we introduce the \term{permutation model}
for a random regular graph on $n$ vertices with \hilite{even} degree $d = 2k$,
where $k \in \N$.
Let $\mtx{\Pi} \in \M_n(\R)$ be a uniformly random permutation matrix
acting on $\R^n$.  Construct the random self-adjoint matrix
\begin{equation} \label{eqn:rdm-reg-graph}
\mtx{Y} \coloneqq \sum_{i=1}^k \mtx{W}_i \in \Sym_n(\R)
\quad\text{where $\mtx{W}_i \sim \mtx{\Pi} + \mtx{\Pi}^\transp$ iid.}
\end{equation}
We interpret $\mtx{Y}$ %
as the adjacency matrix of the random graph.
Note that this random graph model %
is likely to have both
self-loops and duplicated edges.
Regardless, the graph is $d$-regular, and the vector of ones $\vct{1} \in \R^n$
is always an eigenvector of $\mtx{Y}$ with eigenvalue $d$.

We are interested in the \hilite{second-largest} eigenvalue
of the random matrix $\mtx{Y}$,
which describes the expansion properties of the graph~\cite[Sec.~2.3]{HLW06:Expander-Graphs}.
When the degree $d$ is fixed and the number $n$
of vertices increases, classic bounds for the
second eigenvalue $\lambda_2(\mtx{Y})$ assert that
\[
2\sqrt{d - 1} - o_n(1) \leq \lambda_2(\mtx{Y}) \leq 
2\sqrt{d - 1} + o_n(1)
\quad\text{with probability $1 - o_n(1)$.}
\]
The left-hand inequality is a relatively easy result due to
Alon \& Boppana~\cite[Sec.~5.2]{HLW06:Expander-Graphs},
while the right-hand inequality is a celebrated
and difficult theorem due to Friedman~\cite{Fri08:Proof-Alons}.

There has been ongoing interest in establishing quantitative
variants of Friedman's upper bound that address the case where both $d,n \to \infty$.
For recent advances, see the papers~\cite{BvH24:Universality-Sharp,CGV+26:New-Approach}.
The comparison theorem for the maximum eigenvalue (Theorem~\ref{thm:maxeig-main}) supports
a short, direct argument that yields sharper bounds in parts of the parameter regime.

\begin{theorem}[Random regular graph: Second eigenvalue] \label{thm:rdm-graph-2nd}
Consider the random regular graph model~\eqref{eqn:rdm-reg-graph}
with \hilite{even} degree $d$ and with $n \geq 2$ vertices.
In the parameter regime $\log^2 n \leq d \leq n$,
the random adjacency matrix $\mtx{Y}$ satisfies
\begin{equation} \label{eqn:rdm-graph-expect}
\Expect \lambda_2(\mtx{Y}) \leq 2 \sqrt{d-1} \cdot \left[ 1 + 
4 \cdot \left( \log^2( n) / d \right)^{1/4} \right].
\end{equation}
Furthermore, for $\log n \leq s \leq \sqrt{d}$,
\[
\Prob{ \lambda_2(\mtx{Y}) \geq 2\sqrt{d-1} \cdot \left[ 1 + 4 \cdot \left( s^2/d \right)^{1/4} \right] }
	\leq n \cdot \econst^{-s}.
\]
\end{theorem}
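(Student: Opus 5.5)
The plan is to reduce $\lambda_2(\mtx{Y})$ to a maximum eigenvalue on the invariant subspace $\vct{1}^\perp$, apply Theorem~\ref{thm:maxeig-main} there, and bound the Gaussian proxy by a scaled GOE.

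\textbf{Reduction.} Let $\mtx{P} \coloneqq \Id_n - n^{-1}\vct{1}\vct{1}^\transp$ be the orthogonal projector onto $\vct{1}^\perp$. Every $\mtx{W}_i = \mtx{\Pi}+\mtx{\Pi}^\transp$ fixes $\vct{1}$ and preserves $\vct{1}^\perp$. So $\lambda_2(\mtx{Y}) \le \lambda_{\max}(\mtx{P}\mtx{Y}\mtx{P}|_{\vct{1}^\perp})$, and the right-hand side is the maximum eigenvalue of the independent sum $\sum_{i=1}^k \mtx{P}\mtx{W}_i\mtx{P}$ acting on the $(n-1)$-dimensional space $\vct{1}^\perp$.

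\textbf{Hypotheses of Theorem~\ref{thm:maxeig-main}.} Since $\Expect \mtx{\Pi} = n^{-1}\vct{1}\vct{1}^\transp$, the compressed summands are centered: $\mtx{P}(\Expect\mtx{W}_i)\mtx{P} = \mtx{0}$. They also satisfy $\lambda_{\max}(\mtx{P}\mtx{W}_i\mtx{P}) \le \norm{\mtx{\Pi}+\mtx{\Pi}^\transp} \le 2$. Hence $R_+ = 2$, and the dimensional factor is $\log(n-1) \le \log n$.

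\textbf{Variance computation (main obstacle).} The main work is to compute, or at least dominate, the variance function of one summand. For $\mtx{M}$ self-adjoint and supported on $\vct{1}^\perp$, I expand
\[
\Expect\ip{\mtx{\Pi}+\mtx{\Pi}^\transp}{\mtx{M}}^2 = 4\,\Expect\Big(\sum\nolimits_j M_{j\pi(j)}\Big)^2 .
\]
I evaluate this with the joint moments $\Expect[\Pi_{jk}\Pi_{j'k'}]$ of a uniform permutation. These moments are $1/n$ when $(j,k)=(j',k')$, equal $1/(n(n-1))$ when $j\ne j'$ and $k\ne k'$, and vanish otherwise. The zero row and column sums of $\mtx{M}$ then cancel most cross terms. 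The target is a domination
\[
\Varo[\mtx{P}\mtx{W}_i\mtx{P}](\mtx{M}) \le \tfrac{4}{n-1}(1+o(1))\fnormsq{\mtx{M}},
\]
possibly with a slightly worse constant that must be tracked carefully. Summing over the $k=d/2$ summands gives $\Varo[\mtx{Z}] \psdle \tfrac{2d}{n-1}(1+o(1))\fnormsq{\cdot}$. This is the variance function of $\sqrt{d/(n-1)}\,(1+o(1))\,\mtx{X}_{\goe}$ on $\vct{1}^\perp$. By Fact~\ref{fact:gauss-mono} and the GOE bounds of Section~\ref{sec:goe}, I then get $\Expect\lambda_{\max}(\mtx{Z}) = \phi(\mtx{Z}) \lesssim 2\sqrt{d}$ and $\sigma_*^2(\mtx{Z}) \le 4d/(n-1) \lesssim 1$, using $d \le n$ for the second. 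If the constants do not come out cleanly, I fall back on the bound $\Varo \le 4\,\Expect\ip{\mtx{\Pi}}{\mtx{M}}^2$. In either case, $2\sqrt{d} \le 2\sqrt{d-1}\,(1+1/d)$, and the discrepancy is absorbed into the error term.

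\textbf{Plugging in.} Inserting these statistics into \eqref{eqn:maxeig-expect} gives
\[
\Expect\lambda_2(\mtx{Y}) \le 2\sqrt{d} + \sqrt{\big(\tfrac{4}{3}\sqrt{d} + O(1)\big)\cdot 2\log n} + \tfrac{2}{3}\log n .
\]
In the regime $\log^2 n \le d$, the last term is at most $\tfrac{2}{3}d^{1/4}\sqrt{\log n}$. The middle term is about $(8/3)^{1/2}d^{1/4}\sqrt{\log n}$. Dividing by $2\sqrt{d-1} \ge \sqrt{2d}$ bounds the relative error by $c\,(\log^2 n/d)^{1/4}$ with $c < 4$, which yields \eqref{eqn:rdm-graph-expect}.

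\textbf{Tail bound.} The tail estimate follows identically from \eqref{eqn:maxeig-tail} with $s$ in place of $\log n$ and prefactor $(n-1)\econst^{-s} \le n\econst^{-s}$. The condition $s \le \sqrt{d}$ ensures that the linear term $\tfrac{2}{3}s$ is at most $\tfrac{2}{3}d^{1/4}\sqrt{s}$. The condition $s \ge \log n$ lets the $O(1/d)$ slack and the $\sigma_*^2$ contribution be absorbed into the constant $4$.
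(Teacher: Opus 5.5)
Your proposal is correct and follows essentially the same route as the paper: compress to $\vct{1}^\perp$, take $R_+ = 2$, compute the variance function of one compressed permutation using the zero row and column sums, identify the Gaussian proxy as $\sqrt{d/(n-1)}\,\mtx{X}_{\goe}$ on $\vct{1}^\perp$, and feed the GOE statistics into Theorem~\ref{thm:maxeig-main}. The joint-moment computation you outline closes exactly, giving $\Varo[\mtx{W}_i^\perp](\mtx{M}) = \tfrac{4}{n-1}\fnormsq{\mtx{M}}$ with no $(1+o(1))$ factor. This matters, because an unquantified $o(1)$ on the leading term $2\sqrt{d}$ could not be absorbed into the error term, and it yields $\phi(\mtx{Z}) \leq 2\sqrt{d}$ and $\sigma_*^2(\mtx{Z}) = 2d/(n-1)$, exactly as in the paper.
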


\noindent
The proof of Theorem~\ref{thm:rdm-graph-2nd} appears in Section~\ref{sec:rdm-graph-pf}.

For contrast, the maximum eigenvalue bound~\eqref{eqn:maxeig-bvh}
of Brailovskaya \& van Handel~\cite[Thm.~3.8]{BvH24:Universality-Sharp}
produces estimates of the form
\[
\Expect \lambda_2(\mtx{Y}) \leq 2 \sqrt{d-1} \cdot \left[ 1 + \mathrm{Const} \cdot \left(\log^4(n) / d \right)^{1/6} \right]
\quad\text{for $d \gg \log^4(n)$.}
\]
Theorem~\ref{thm:rdm-graph-2nd} improves uniformly on their bound while extending the range of legal parameters.  The paper~\cite{CGV+26:New-Approach} contains stronger bounds
in the regime $d \ll n^{1/6}$, but that approach requires more
detailed combinatorial information about the random permutation model.

\subsubsection{Proof of Theorem~\ref{thm:rdm-graph-2nd}}
\label{sec:rdm-graph-pf}

To access the second eigenvalue of the random adjacency matrix $\mtx{Y}$, %
we compress %
to the orthogonal complement $\vct{1}^\perp$ %
of the leading eigenvector.  Define
\[
\mtx{Y}^\perp \coloneqq \sum_{i=1}^k \mtx{W}_i^\perp
\quad\text{where}\quad
\mtx{W}_i^\perp \coloneqq \mtx{W}_i \, \vert_{\vct{1}^\perp}.
\]
The maximum eigenvalue of $\mtx{Y}^\perp$ agrees with the
second-largest eigenvalue of $\mtx{Y}$.
Each summand $\mtx{W}^\perp$ satisfies
$\lambda_{\max}(\mtx{W}^\perp) \leq \lambda_{\max}(\mtx{\Pi} + \mtx{\Pi}^\transp) \leq 2$,
so we may take
the upper bound statistic $R_+ \coloneqq 2$.

To activate the comparison theorem, we must calculate
the Gaussian proxy for the adjacency matrix.
It is easiest to start with the expectation and variance
function of the compression $\mtx{\Pi}^\perp$ of a single
random permutation $\mtx{\Pi} \in \M_n(\R)$.
As for the expectation, note that
\[
\Expect \mtx{\Pi}^\perp
	= (\Expect \mtx{\Pi})^{\perp}
	= ( n^{-1} \vct{11}^\transp )^{\perp} = \mtx{0}.
\]
To develop the variance function, fix a self-adjoint matrix
$\mtx{M}^\perp \coloneqq [m_{ij}]$ with centered rows and columns.
Writing $\pi(i)$ for the nonzero index in row $i$ of the permutation $\mtx{\Pi}$,
we find that
\begin{align*}
\Var[ \ip{ \mtx{\Pi}^\perp }{ \mtx{M}^\perp } ]
	&= \Expect[ \ip{ \mtx{\Pi} }{ \mtx{M}^\perp }^2 ]
	= \Expect \left( \sum_{i=1}^n m_{i \pi(i)} \right)^2
	= \sum_{i,j = 1}^n \Expect[ m_{i\pi(i)} m_{j \pi(j)} ] \\
	&= \sum_{i=1}^n \Expect[ m_{i\pi(i)}^2 ]
	+ \sum_{i \neq j} \Expect[ m_{i\pi(i)} \Expect[ m_{j\pi(j)} \condbar \pi(i) ] ] \\
	&= \frac{1}{n} \sum_{i,j=1}^n m_{ij}^2
	+ \frac{1}{n(n-1)} \sum_{i,j=1}^n m_{ij}^2
	= \frac{1}{n-1} \sum_{i,j = 1}^n m_{ij}^2.
\end{align*}
The first relation depends on the facts that $\Expect \mtx{\Pi}^\perp = \mtx{0}$
and that the compression operator is an orthogonal projector on $\M_n(\R)$.
For the conditional expectation, we exploited the fact
\[
\sum_{j \neq i, \ell \neq \pi(i)} m_{j\ell}
	= \sum_{j \neq i} - m_{j \pi(i)} = m_{i \pi(i)}.
\]
This point follows because each row and column of $\mtx{M}^\perp$ sums to zero.

By elaborating on the latter calculation, we quickly determine the
expectation and variance function of each self-adjoint summand
$\mtx{W}^\perp = (\mtx{\Pi} + \mtx{\Pi}^\transp)^\perp$.
Indeed,
\[
\Expect \mtx{W}^\perp = \mtx{0}
\quad\text{and}\quad
\Var[ \ip{ \mtx{W}^\perp }{ \mtx{M}^\perp } ]
	= \frac{2}{n-1} \cdot 2 \fnorm{ \mtx{M}^\perp }^2.
\]
By inspection, we recognize that the Gaussian proxy for each summand $\mtx{W}$ is the scaled
GOE matrix $\sqrt{2/(n-1)} \mtx{X}_{\goe}$ acting on the $(n-1)$-dimensional subspace $\vct{1}^\perp$.

Since the random matrix $\mtx{Y}^\perp$ is the sum of $k = d/2$ iid copies of $\mtx{W}^\perp$,
these calculations imply that its Gaussian proxy is
\[
\mtx{Z}^{\perp} \coloneqq %
	\sqrt{\frac{d}{n-1}} \cdot \mtx{X}_{\goe}.
\]
The statistics of the Gaussian proxy are a matter of record (Section~\ref{sec:goe}):
\[
\phi(\mtx{Z}^{\perp}) = \Expect \lambda_{\max}(\mtx{Z}^{\perp}) \leq 2 \sqrt{d}
\quad\text{and}\quad
\sigma_*^2(\mtx{Z}^{\perp}) \leq \frac{2d}{n-1}.
\]
With this information at hand, we can invoke Theorem~\ref{thm:maxeig-main} with $R_+ = 2$
to
arrive at the bound
\[
\Expect \lambda_{\max}(\mtx{Y}^\perp) \leq 2\sqrt{d} + \sqrt{\left(\tfrac{4}{3} \sqrt{d} + 2d/(n-1)\right) \cdot 2\log(n-1)}
	+ \tfrac{2}{3} \log(n-1).
\]
Furthermore, for $s \geq \log(n-1)$,
\[
\Prob{ \lambda_{\max}(\mtx{Y}^\perp) \geq 2 \sqrt{d} + \sqrt{\left(\tfrac{4}{3} \sqrt{d} + 2d/(n-1)\right) \cdot 2s} + \tfrac{2}{3} s }
	\leq (n-1) \cdot \econst^{-s}.
\]
Theorem~\ref{thm:rdm-graph-2nd} follows after some simplifications.
\hfill\qed

\subsection{The random Pauli model: Spectral edges}
\label{sec:quantum-appl}

This section employs the spectral-norm comparison theorem (Corollary~\ref{cor:norm-main})
to treat a problem in quantum information theory that involves
an exponentially large random matrix.  Since our results have
a weaker dependence on the dimension of the random matrix
than previous work, we reap significant benefits. %

Can we design random matrices whose spectral statistics mimic
the GUE model, yet have more structure and less randomness?
This question arises %
on the quest for a possible example of quantum supremacy~\cite{CDB+24:Sparse-Random}.
We study the random Pauli model, proposed by Chen et al.~\cite{CDB+24:Sparse-Random},
and we %
prove that
the edges of its spectral distribution compare with those of the GUE.

First, recall the definition of the \term{Pauli matrices}:
\begin{equation} \label{eqn:pauli}
\mtx{H}_0 \coloneqq \begin{bmatrix} 1 & \phantom{+}0 \\ 0 & \phantom{+}1 \end{bmatrix}; \quad
\mtx{H}_1 \coloneqq \begin{bmatrix} 0 & \phantom{+}1 \\ 1 & \phantom{+}0 \end{bmatrix}; \quad
\mtx{H}_2 \coloneqq \begin{bmatrix} 0 & -\iunit \\ \iunit & \phantom{+}0 \end{bmatrix}; \quad
\mtx{H}_3 \coloneqq \begin{bmatrix} 1 & \phantom{+}0 \\ 0 & -1 \end{bmatrix}.
\end{equation}
Each of these matrices is self-adjoint and unitary, and the collection forms an
orthogonal basis for the real-linear space $\Sym_2(\C)$ equipped with the trace
inner product.
For a parameter $n \in \N$, we can construct a self-adjoint, unitary matrix with
dimension $N \coloneqq 2^n$ by taking a Kronecker product of $n$ Pauli matrices:
\[
\mtx{H}_{I} \coloneqq \mtx{H}_{i_1} \otimes \cdots \otimes \mtx{H}_{i_n} \in \Sym_N(\C)
\quad\text{where}\quad I \coloneqq (i_1, \dots, i_n) \in \{0,1,2,3\}^n.
\]
Each matrix $\mtx{H}_I$ is self-adjoint and unitary.
Its spectral norm $\norm{\mtx{H}_I} = 1$, while its Frobenius norm $\fnorm{\mtx{H}_I} = \sqrt{N}$.
Together, these $4^n$ matrices compose an orthogonal basis for $\Sym_N(\C)$.

For a parameter $k \in \N$, the \term{random Pauli model} is a random matrix of the form
\begin{equation} \label{eqn:rdm-pauli}
\mtx{Y} \coloneqq \frac{1}{\sqrt{k}} \sum_{i=1}^k \eps_i \mtx{H}_{J_i} \in \Sym_N(\C)
\quad\text{where}\quad
\begin{aligned}
&\text{$\eps_i \sim \uniform\{\pm 1\}$ iid};\\
&\text{$J_i \sim \uniform\{0,1,2,3\}^n$ iid}.
\end{aligned}
\end{equation}
The Rademacher variables are independent from the random indices,
and each summand has spectral norm equal to one.
Observe that we can construct this random matrix using
only $(2n + 1) k$ random bits, even though its dimension
$N = 2^n$ is exponential in $n$.

We can easily compute the expectation and variance function of the random
Pauli model.  It is straightforward to check (see below) that
\[
\Expect \mtx{Y} = \mtx{0}
\quad\text{and}\quad
\Varo[\mtx{Y}](\mtx{M}) = N^{-1}\fnormsq{\mtx{M}}
\quad\text{for each $\mtx{M} \in \Sym_N(\C)$.}
\]
Therefore, we anticipate that the spectral statistics of $\mtx{Y}$
should be comparable with those of the normalized GUE matrix
$N^{-1/2} \mtx{X}_{\gue} \in \Sym_N(\C)$.
The question is how many summands $k$ are sufficient to
guarantee this affinity.

\begin{theorem}[Random Pauli model] \label{thm:rdm-pauli}
Choose a parameter $n \in \N$, and set $N \coloneqq 2^n$.
Draw a random matrix $\mtx{Y} \in \Sym_N(\C)$ from
the random Pauli model~\eqref{eqn:rdm-pauli} with $k$ summands,
and let $\mtx{Z} \coloneqq N^{-1/2} \mtx{X}_{\gue} \in \Sym_N(\C)$
be a normalized GUE matrix. %
For parameters $\alpha, p \in (0, 1)$, suppose that the number $k$ of summands
satisfies
\[
\left[ \frac{ (n+1) + \log(1/p) }{\alpha^2} \right]^2 \leq k \leq \tfrac{1}{9} N^2.
\]
Then the spectral edges of the two random matrices compare in the sense that
\[
\Prob{ \norm{ \mtx{Y} } \geq (1 + \alpha) \cdot \Expect \norm{\mtx{Z}} }
	\leq p \quad\text{and}\quad %
\Expect \norm{ \mtx{Y} } \leq (1 + \alpha) \cdot \Expect \norm{\mtx{Z}}.
\]
Since $\mtx{Y}$ and $\mtx{Z}$ have symmetric distributions,
the same probability inequalities hold
for both the minimum and maximum eigenvalue.
\end{theorem}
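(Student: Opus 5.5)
We apply Corollary~\ref{cor:norm-main} directly to the Hermitian random matrix $\mtx{Y}$, treating it as a $N\times N$ matrix; equivalently, we can apply Theorem~\ref{thm:maxeig-main} to $\pm\mtx{Y}$ and take a union bound, which gives dimension factor $2N$. The summands are $\mtx{W}_i = k^{-1/2}\eps_i \mtx{H}_{J_i}$. They are centered because of the Rademacher signs, and $\norm{\mtx{W}_i} = k^{-1/2}$, so $R_\pm = k^{-1/2}$. Next we verify the variance function. For fixed $\mtx{M}$, we have $\Expect \ip{\eps \mtx{H}_J}{\mtx{M}}^2 = 4^{-n}\sum_I \ip{\mtx{H}_I}{\mtx{M}}^2 = 4^{-n} N \fnormsq{\mtx{M}}$, since $\{N^{-1/2}\mtx{H}_I\}$ is an orthonormal basis of $\Sym_N(\C)$. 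Summing the $k$ summands with the $1/k$ normalization gives $\Varo[\mtx{Y}](\mtx{M}) = N^{-1}\fnormsq{\mtx{M}}$. Hence the Gaussian proxy is exactly $\mtx{Z} = N^{-1/2}\mtx{X}_{\gue}$.

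The statistics of the proxy come from Section~\ref{sec:gue}. We have $\sigma_*^2(\mtx{Z}) = 1/N$ and $\phi(\mtx{Z}) \le \phi_\pm(\mtx{Z}) = \Expect\norm{\mtx{Z}} \le 2$. We also need a lower bound $\Expect\norm{\mtx{Z}} \ge c$ in order to convert additive errors into the multiplicative factor $(1+\alpha)$. By \eqref{eqn:mki-norm}, $\Expect\norm{\mtx{Z}} \ge \sqrt{(2/\pi)\sigma^2(\mtx{Z})} = \sqrt{2/\pi}$, since $\sigma^2(\mtx{Z}) = N^{-1}\cdot N = 1$. (Using the sharper value $\Expect\norm{\mtx Z}\ge 1$ or so would also do.) The dimension factor in the tail bound is $2N = 2^{n+1}$. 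To get failure probability $p$ we take $s = (n+1)\log 2 + \log(1/p) \le (n+1) + \log(1/p)$.

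Inserting everything, the error term in the tail bound is
\[
\sqrt{\left(\tfrac{1}{3}k^{-1/2}\cdot 2 + N^{-1}\right)2s} + \tfrac13 k^{-1/2} s .
\]
The condition $k \le N^2/9$ gives $N^{-1} \le \tfrac13 k^{-1/2}$, so the variance factor is at most $k^{-1/2}$. Write $A := (n+1) + \log(1/p)$, so the hypothesis reads $k^{1/2} \ge A/\alpha^2$, which gives $s k^{-1/2} \le \alpha^2$. The error is then at most $\sqrt{2 s k^{-1/2}} + \tfrac13 s k^{-1/2} \le \sqrt2\,\alpha + \tfrac13\alpha^2 \le (\sqrt2+\tfrac13)\alpha$. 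We compare this with $\alpha\,\Expect\norm{\mtx{Z}} \ge \alpha\sqrt{2/\pi}\approx 0.8\alpha$.

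\textbf{This constant bookkeeping is the main obstacle.} The crude lower bound $\sqrt{2/\pi}$ does not beat $\sqrt2+\tfrac13$. So we need a sharper lower bound, roughly $\Expect\norm{\mtx{Z}} \ge \lambda_{\max}$-scale $\approx 2 - o(1)$ for the normalized GUE, together with a careful use of the slack in $s$ (note $\log 2 < 1$) and possibly in $k\le N^2/9$. First I would try a lower bound of the form $\Expect\norm{\mtx Z}\ge \sqrt{2}$-ish. One source is $\Expect\norm{\mtx Z}^2 \ge \Expect \norm{\mtx Z \vct e_1}^2 = 1$ combined with concentration from Fact~\ref{fact:gauss-conc}. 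Another is a direct semicircle-edge estimate for GUE; this would need an extra small case check for small $n$.

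Finally, the expectation bound follows the same way from the expectation form of Corollary~\ref{cor:norm-main}, using $\log(2N)$ in place of $s$. Here $\log(2N) \le A$ for any $p<1$, so the same estimate applies. The statement about minimum and maximum eigenvalues follows from the symmetry $\mtx{Y}\sim-\mtx{Y}$ and $\mtx Z\sim -\mtx Z$, because then $\norm{\cdot}$ dominates $\lambda_{\max}$ and $-\lambda_{\min}$.
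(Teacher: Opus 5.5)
Your route is the paper's route: Corollary~\ref{cor:norm-main} with $R_{\pm} = k^{-1/2}$, $\phi_{\pm}(\mtx{Z}) \leq 2$, $\sigma_*^2(\mtx{Z}) = N^{-1}$, dimension factor $2N$, $s = \log(2N/p)$, and $3\sqrt{k} \leq N$ to absorb the $N^{-1}$ term. Your bookkeeping is correct: the additive error is at most $\sqrt{2}\,\alpha + \alpha^2/3$. The step you flag, however, is a genuine gap. It is also exactly the step the paper's write-up passes over, by writing $\alpha \cdot \Expect\norm{\mtx{Z}} \approx 2\alpha$. To turn the additive error into the factor $(1+\alpha)$, you need a \emph{proved} lower bound of roughly $\Expect\norm{\mtx{Z}} \geq \sqrt{2} + \alpha/3 \approx 1.75$, valid for every admissible $N$.

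The fixes you suggest cannot supply this bound:
\begin{itemize}
\item The estimate $\Expect\norm{\mtx{Z}}^2 \geq \Expect\norm{\mtx{Z}\vct{e}_1}^2 = 1$, combined with Gaussian concentration ($\Var\norm{\mtx{Z}} \leq \sigma_*^2(\mtx{Z}) = N^{-1}$), gives only $\Expect\norm{\mtx{Z}} \geq \sqrt{1 - N^{-1}}$.
\item The bound \eqref{eqn:mki-norm} gives only $\sqrt{2/\pi}$.
\item Even a lower bound of $\sqrt{2}$ would fall short unless you exploit extra slack.
\end{itemize}

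What you need is a genuine edge estimate for the normalized GUE. One route is the moment method. You have $\Expect\norm{\mtx{Z}}^{2m} \geq N^{-1}\Expect\trace\mtx{Z}^{2m} \geq C_m$, the Catalan number, because the genus expansion of GUE moments has nonnegative corrections. Combine this via Minkowski with the subgaussian fluctuations of $\norm{\mtx{Z}}$ at scale $N^{-1/2}$ from Fact~\ref{fact:gauss-conc}. That gives $\Expect\norm{\mtx{Z}} \geq C_m^{1/(2m)} - \mathcal{O}(\sqrt{m/N})$, which is $2 - o(1)$ with an explicit rate once $m$ is chosen as a small power of $N$.

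Small $N$ must be checked separately, but the hypotheses help. The constraint $A^2/\alpha^4 \leq k \leq N^2/9$ (with your $A$) forces $9(n+1)^2 < 4^n$, so $n \geq 4$. It also forces $A < N/3$. So for small $N$ the quantity $\log(1/p)$ is small and $s/A$ is close to $\log 2$. The required lower bound then drops, to about $1.4$ at $N = 16$, which an explicit GUE computation can confirm.

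Without some input of this kind, your argument (like the paper's as written) proves only the additive statement $\Prob{\norm{\mtx{Y}} \geq \Expect\norm{\mtx{Z}} + \sqrt{2}\,\alpha + \alpha^2/3} \leq p$ and its expectation analogue, not the multiplicative one.
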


\noindent
The proof of Theorem~\ref{thm:rdm-pauli} appears in the upcoming subsection.

Our result %
improves
over the bounds that follow from existing techniques,
but it is not clear whether it reaches the optimal scaling. %
Roughly, we have demonstrated that $k \gtrsim n^2 \alpha^{-4}$ summands are enough
to obtain a one-sided relative error less than $\alpha$.
The original result for the random Pauli model,
due to Chen et al.~\cite[Thm.~III.1]{CDB+24:Sparse-Random},
requires $k \gtrsim n^3 \alpha^{-4}$ to achieve the same
level of control.
Meanwhile, if we invoke the maximum eigenvalue estimate~\eqref{eqn:maxeig-bvh}
of Brailovskaya \& van Handel~\cite[Cor.~2.7]{BvH24:Universality-Sharp},
we arrive at the bound $k \gtrsim n^4 \alpha^{-6}$.

Under similar assumptions, the mean spectral distributions of
the two random matrices $\mtx{Y}$ and $\mtx{Z}$ are also comparable;
for example, one may apply the universality result~\cite[Thm.~2.10]{BvH24:Universality-Sharp}.
Thus, the spectral norm of $\mtx{Y}$ is unlikely
to be much smaller than the spectral norm of $\mtx{Z}$.
These results are outside the %
scope of this paper,
so we terminate the discussion here.

\subsubsection{Proof of Theorem~\ref{thm:rdm-pauli}} %

This result is a consequence of Corollary~\ref{cor:norm-main}.
To begin, let us confirm the calculation of the second-order statistics of the
random Pauli matrix $\mtx{Y}$, defined in~\eqref{eqn:rdm-pauli}.
Each summand takes the form
\[
\mtx{W} \coloneqq \eps \cdot \mtx{H}_{J}
\quad\text{where}\quad
\begin{aligned}
&\text{$\eps \sim \uniform\{\pm 1\}$;} \\
&\text{$J \sim \uniform\{0,1,2,3\}^n$.}
\end{aligned}
\]
The Rademacher random variables are symmetric, so $\Expect \mtx{W} = \mtx{0}$.
For a fixed $\mtx{M} \in \Sym_N(\C)$,
\begin{align*}
\Varo[\mtx{W}](\mtx{M}) = \Expect[ \ip{ \mtx{W} }{ \mtx{M} }^2 ]
	= \sum_{I \in \{0,1,2,3\}^n} \ip{ \mtx{H}_{I} }{ \mtx{M} }^2 \cdot N^{-2}
	= N^{-1} \fnormsq{\mtx{M}}.
\end{align*}
Indeed, there are $4^{n} = N^2$ equally likely choices for the random index $J$, and the family
$( N^{-1/2} \mtx{H}_I : I \in \{0,1,2,3\}^n )$ composes an orthonormal basis for $\Sym_N(\C)$.

Since the random Pauli model $\mtx{Y}$ is defined by~\eqref{eqn:rdm-pauli}, its statistics satisfy
\[
\Expect \mtx{Y} = \mtx{0}
\quad\text{and}\quad
\Varo[\mtx{Y}](\mtx{M}) = N^{-1} \fnormsq{\mtx{M}}.
\]
The Gaussian matrix $\mtx{Z} \coloneqq N^{-1/2} \mtx{X}_{\gue} \in \Sym_N(\C)$ has
the same second-order statistics as $\mtx{Y}$.  From the discussion in Section~\ref{sec:gue},
we find that
\[
\phi_{\pm}(\mtx{Z}) = \Expect \norm{\mtx{Z}}
	\leq 2
\quad\text{and}\quad
\sigma_*^2(\mtx{Z}) = N^{-1}.
\]
Moreover, each of the $k$ summands in $\mtx{Y}$ has spectral norm bounded
above by $R_{\pm} = k^{-1/2}$.
The comparison theorem (Corollary~\ref{cor:norm-main}) yields the bound
\begin{gather*}
\Prob{ \norm{\mtx{Y}} \geq \Expect \norm{\mtx{Z}} + \sqrt{ \left(\tfrac{2}{3}k^{-1/2} + N^{-1}\right) \cdot 2s } + \tfrac{1}{3} k^{-1/2} s  } \leq 2N \cdot \econst^{-s}.
\end{gather*}
To achieve failure probability $p \in (0,1)$, set the tail level $s = \log(2N/p)$.
For an error tolerance $\alpha \in (0,1)$,
we can submerge the total of the two error terms
below the level $\alpha \cdot \Expect \norm{\mtx{Z}} \approx 2 \alpha$
by taking $k \geq \alpha^{-4} \log^2(2N/p)$ and requiring that $3\sqrt{k} \leq N$.
The expectation bound holds with the same choice of the number $k$.
\hfill\qed

\subsection{Sample covariance matrices: Fourth-moment theorem}
\label{sec:scov-appl}

In this section, we use the comparison theorem for the minimum
eigenvalue (Corollary~\ref{cor:mineig-main}) to analyze the sample
covariance matrix of a random vector whose marginals have uniformly
bounded fourth moments.  This result depends on %
the fact that
we only need one-sided control on the summands.

Consider a centered random vector $\vct{w} \in \R^d$. %
For simplicity, assume that its population covariance $\mtx{K} \coloneqq \Expect[\vct{ww}^\transp]$ has full rank.  Given $n$ iid samples of the random vector, the sample covariance matrix
is defined as
\begin{equation} \label{eqn:scov-vec}
\widehat{\mtx{K}}_n \coloneqq \frac{1}{n} \sum_{i=1}^n \vct{w}_i \vct{w}_i^\transp
\quad\text{where $\vct{w}_i \sim \vct{w}$ iid.}
\end{equation}
Our goal is to find conditions under which the sample covariance
reliably detects a constant proportion of the variance in each
direction.  That is, for a parameter $\eps \in (0, 1)$,
\[
\vct{a}^\transp \widehat{\mtx{K}}_n \vct{a}
	\geq (1 - \eps) \cdot \vct{a}^\transp \mtx{K} \vct{a}
	\quad\text{for all $\vct{a} \in \R^d$.}
\]
We will establish the following theorem.

\begin{theorem}[Sample covariance: Fourth-moment theorem] \label{thm:scov-var}
Assume that $d \geq 2$.  Let $\vct{w} \in \R^d$ be a centered random vector with
covariance matrix $\mtx{K}$ and with four finite moments.
For a constant $\beta \geq 1$, assume that
\begin{equation} \label{eqn:scov-mom}
\Var[ \ip{\vct{w}}{\vct{a}}^2 ]
	\leq \beta^2 \cdot ( \Var[ \ip{\vct{w}}{\vct{a}} ] )^2
	\quad\text{for all $\vct{a} \in \R^d$.}
\end{equation}
For parameters $\eps, p \in (0,1)$, suppose that the
number $n$ of samples satisfies
\begin{equation} \label{eqn:scov-samples}
n \geq \frac{26 \beta^2 (d \vee \log(d/p))}{\eps^2}.
\end{equation}
Then, with probability at least $1 - p$, the sample
covariance matrix $\widehat{\mtx{K}}_n$ defined in~\eqref{eqn:scov-vec}
satisfies
\begin{equation} \label{eqn:scov-bd}
\vct{a}^\transp \widehat{\mtx{K}}_n \vct{a}
	\geq (1 - \eps) \cdot \vct{a}^\transp \mtx{K} \vct{a}
	\quad\text{for all $\vct{a} \in \R^d$.}
\end{equation}
\end{theorem}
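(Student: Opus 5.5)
Because the conclusion~\eqref{eqn:scov-bd} is invariant under replacing $\vct{w}$ by $\mtx{K}^{-1/2}\vct{w}$ (and the moment hypothesis~\eqref{eqn:scov-mom} is invariant too), I will first reduce to the isotropic case $\mtx{K} = \Id_d$. The goal is then to show $\lambda_{\min}(\widehat{\mtx{K}}_n) \geq 1 - \eps$ with probability at least $1-p$. I will apply Corollary~\ref{cor:mineig-main} to the summands $\mtx{W}_i \coloneqq n^{-1} \vct{w}_i\vct{w}_i^\transp$. Since $\mtx{W}_i \psdge \mtx{0}$ and $\Expect \mtx{W}_i = n^{-1}\Id$, the centered summands satisfy $\lambda_{\min}(\mtx{W}_i - \Expect\mtx{W}_i) \geq -1/n$, so $R_- = 1/n$. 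This is the one-sided control that makes the corollary applicable even though $\vct{w}$ may be unbounded.

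Next I will dominate the Gaussian proxy. The variance function is $\Varo[\widehat{\mtx{K}}_n](\mtx{M}) = n^{-1}\Var[\vct{w}^\transp\mtx{M}\vct{w}] \leq n^{-1}\Expect(\vct{w}^\transp \mtx{M}\vct{w})^2$. Diagonalize $\mtx{M} = \sum_j \mu_j \vct{a}_j\vct{a}_j^\transp$ with orthonormal $\vct{a}_j$. Then
\[
\Expect(\vct{w}^\transp\mtx{M}\vct{w})^2 = \sum_{j,k}\mu_j\mu_k \Expect[\ip{\vct{w}}{\vct{a}_j}^2\ip{\vct{w}}{\vct{a}_k}^2].
\]
Each fourth moment satisfies $\Expect\ip{\vct{w}}{\vct{a}}^4 \leq (1+\beta^2)$ by~\eqref{eqn:scov-mom}, and Cauchy--Schwarz bounds the cross terms. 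The resulting bound, $(1+\beta^2)(\sum_j\abs{\mu_j})^2$, only controls the weak variance: $\sigma_*^2(\mtx{Z}) \leq (1+\beta^2)/n \leq 2\beta^2/n$. For the fluctuation, the better route is the matrix Khinchin bound $\phi(\mtx{Z}) \leq \sqrt{2\sigma^2(\mtx{Z})\log d}$, but that loses a $\log d$ factor, whereas the sample bound~\eqref{eqn:scov-samples} demands $n \gtrsim d$ with no log. So I instead want to dominate $\Varo[\widehat{\mtx{K}}_n]$ by a multiple of a GOE variance, $c\beta^2 n^{-1}\cdot 2\fnormsq{\mtx{M}}$, and then use Fact~\ref{fact:gauss-mono} together with $\phi(\mtx{X}_{\goe}) \leq 2\sqrt{d}$.

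This domination is the main obstacle, because I have to bound the cross terms $\sum_{j\neq k}\mu_j\mu_k\Expect[\ip{\vct{w}}{\vct{a}_j}^2\ip{\vct{w}}{\vct{a}_k}^2]$ by $\fnormsq{\mtx{M}}$ using only the hypothesis on squared marginals. The trick I would try is to write $\Var[\vct{w}^\transp\mtx{M}\vct{w}] = \Var[\sum_j \mu_j \ip{\vct{w}}{\vct{a}_j}^2]$ and estimate it using the variances of the individual squared marginals, which are at most $\beta^2$ each. A cleaner fallback is to split $\mtx{M} = \mtx{M}_+ - \mtx{M}_-$ into positive and negative parts. For a psd part, $\sqrt{\mtx{M}_+}\vct{w}$ is a vector whose norm squared is $\vct{w}^\transp\mtx{M}_+\vct{w}$, and I would try to compare its variance through the random vector $\vct{g}\sim\normal(\vct{0},\mtx{M}_+)$ averaging: $\vct{w}^\transp\mtx{M}_+\vct{w} = \Expect_{\vct{g}}\ip{\vct{w}}{\vct{g}}^2$. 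Convexity of variance under this mixture (Jensen on $\Var$ is not convex in general, so I would apply Cauchy--Schwarz in $L_2(\vct{g})$ to the covariance instead) gives $\Var \leq \beta^2 (\Expect_{\vct{g}}\norm{\vct{g}}^2)^2$-type bounds. These are too lossy, since they produce $\trace(\mtx{M})^2$ rather than a Frobenius norm, so I would refine by decoupling: compute $\Var[\ip{\vct{w}}{\vct{g}}\ip{\vct{w}}{\vct{g}'}]$ with independent copies $\vct{g}, \vct{g}'$, which yields something of order $\beta^2\fnormsq{\mtx{M}_+}$ plus a constant multiple of $\fnormsq{\mtx{M}_+}$. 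I expect the outcome to be $\Varo[\widehat{\mtx{K}}_n] \psdle C\beta^2 n^{-1}\fnormsq{\cdot}$ with a small absolute constant $C$, so that the proxy is dominated by $\mtx{Z}' = \Id + \sqrt{C\beta^2/(2n)}\,\mtx{X}_{\goe}$. This dominating proxy satisfies $\Expect\lambda_{\min}(\mtx{Z}') \geq 1 - \sqrt{2C}\,\beta\sqrt{d/n}$, $\phi(\mtx{Z}') \leq \sqrt{2C}\,\beta\sqrt{d/n}$, and $\sigma_*^2(\mtx{Z}') \leq C\beta^2/n$.

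Finally, I plug these into the tail bound of Corollary~\ref{cor:mineig-main} with $s = \log(d/p)$, so that the failure probability is $d\econst^{-s} = p$. The deficit becomes
\[
\sqrt{2C}\,\beta\sqrt{d/n} + \sqrt{\left(\tfrac{1}{3}n^{-1}\phi(\mtx{Z}') + C\beta^2/n\right) 2s} + \tfrac{1}{3}s/n.
\]
Under~\eqref{eqn:scov-samples}, each term is a small multiple of $\eps$: the first is at most about $\sqrt{2C/26}\,\eps$, the second about $\sqrt{2C/26}\,\eps$, and the third is at most $\eps^2/78$. Choosing the constant $26$ to absorb these sums gives total deficit at most $\eps$, which proves~\eqref{eqn:scov-bd}.
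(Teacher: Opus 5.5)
\textbf{Gap: the GOE domination step is false.} Your reduction to the isotropic case, the choice $R_- = 1/n$, and the final substitution $s = \log(d/p)$ into Corollary~\ref{cor:mineig-main} all match the paper. The gap is the step you flag as the main obstacle. The domination $\Varo[\widehat{\mtx{K}}_n] \psdle C\beta^2 n^{-1}\fnormsq{\cdot}$, i.e.\ an interaction-energy bound $w(\mtx{Z}) \le C\beta^2/n$, is \emph{false} under~\eqref{eqn:scov-mom}. So no decoupling or $\mtx{M}_+/\mtx{M}_-$ splitting can establish it. The hypothesis controls $\Varo[\vct{w}\vct{w}^\transp]$ only along rank-one directions $\vct{u}\vct{u}^\transp$.

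For a counterexample, take $\vct{w} = \xi\vct{g}$ with $\vct{g}\sim\normal(\vct{0},\Id_d)$ and an independent scalar $\xi$ with $\xi^2\in\{1/2,3/2\}$ equally likely. Then $\vct{w}$ is isotropic and $\Var[\ip{\vct{w}}{\vct{a}}^2] = 3\Expect\xi^4 - 1 = 11/4$ for every unit $\vct{a}$, so $\beta^2 = 11/4$. Yet $\Varo[\vct{w}\vct{w}^\transp](\Id) = \Var[\norm{\vct{w}}^2] = d^2/4 + 5d/2$, while $\fnormsq{\Id} = d$. The ratio grows linearly in $d$, so no absolute $C$ exists. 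In this example $\phi(\mtx{Z})$ is still $\lesssim \sqrt{d/n}$: the excess variance sits on the direction $\Id$ and only adds a centered scalar shift. GOE domination is simply the wrong intermediate target.

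\textbf{Missing idea: the weak variance alone controls the fluctuation.} You can bound $\phi(\mtx{Z})$ without a logarithm using only $\sigma_*^2$. For unit $\vct{u}$, $\Varo[\mtx{Z}](\vct{u}\vct{u}^\transp) = n^{-1}\Var[\ip{\vct{w}}{\vct{u}}^2] \le \beta^2/n$. Hence every marginal of the Gaussian process $\vct{u}\mapsto\vct{u}^\transp(\mtx{Z}-\Expect\mtx{Z})\vct{u}$ has variance at most $\beta^2/n$. A net argument then gives $\phi(\mtx{Z}) \le \Expect\norm{\mtx{Z}-\Expect\mtx{Z}} \lesssim \sqrt{\beta^2 d/n}$; for instance, use $\norm{\mtx{G}} \le 2\max_{\vct{u}\in\mathcal{N}}\abs{\vct{u}^\transp\mtx{G}\vct{u}}$ for a $\tfrac14$-net $\mathcal{N}$ of the sphere with $\abs{\mathcal{N}}\le 9^d$. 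This is exactly the paper's route. It obtains $\phi(\mtx{Z}) \le \sqrt{12\beta^2 d/n}$ and reads $\sigma_*^2(\mtx{Z}) \le \beta^2/n$ off the same display.

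\textbf{Constants.} The constant $26$ is fixed by the statement, so it is not yours to choose, and it leaves almost no slack. With the paper's bounds, the total deficit in the worst case is just under $\eps$ (about $0.999\,\eps$). Your cruder estimate $\sigma_*^2(\mtx{Z}) \le (1+\beta^2)/n \le 2\beta^2/n$ comes from going through fourth moments instead of using the rank-one variance directly. Even combined with the paper's fluctuation bound, it would push the deficit bound to about $1.1\,\eps$, so the argument would not close.
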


\noindent
The proof of Theorem~\ref{thm:scov-var} appears in the next subsection.

This statement implies a fundamental fact from high-dimensional
statistics due to Oliveira~\cite[Thm.~1]{Oli16:Lower-Tail}.
The companion paper~\cite[Thm.~5.2]{Tro26:Comparison-Theorems-Minimum}
contains another proof based on Fact~\ref{fact:mineig-psd}.
Our purpose here is to demonstrate that the new comparison theorem
(Corollary~\ref{cor:mineig-main}) is also powerful enough to describe
the same phenomenon.
We remark that prior %
results place a slightly stronger hypothesis
on the random vector:
\[
\Expect[ \ip{\vct{w}}{\vct{a}}^4 ]
	\leq \alpha^2 \cdot ( \Var[ \ip{\vct{w}}{\vct{a}} ] )^2
	\quad\text{for all $\vct{a} \in \R^d$.}
\]
Although the assumption~\eqref{eqn:scov-mom} is formally weaker,
the two results are comparable because $\beta^2 \geq 1/3$
when $d \geq 2$; see \cite[Prob.~1.23]{Tro20:Randomized-Algorithms-LN}. %

\subsubsection{Proof of Theorem~\ref{thm:scov-var}}

Without loss of generality,
we may assume that the random vector $\vct{w}$
is isotropic: $\Expect[\vct{ww}^\transp] = \Id_d$.
Indeed, the moment condition~\eqref{eqn:scov-mom}
and the conclusion~\eqref{eqn:scov-bd}
are both invariant under invertible linear
transformations.  

The sample covariance matrix $\widehat{\mtx{K}}_n$ is
defined by~\eqref{eqn:scov-vec}.
As the dimension $d \geq 2$, the lower bound statistic satisfies %
\[
n^{-1} \lambda_{\min}(\vct{ww}^\transp - \Expect[\vct{ww}^\transp])
	= n^{-1} \lambda_{\min}( - \Id_d)
	= - n^{-1} \eqqcolon - R_-.
\]
Since the sample covariance matrix is isotropic,
the Gaussian proxy $\mtx{Z} \in \Sym_d(\R)$
has expectation $\Expect[\mtx{Z}] = \Expect[ \widehat{\mtx{K}}_n ] = \Id_d$.
For any \hilite{unit} vector $\vct{u} \in \R^d$,
the variance function of $\mtx{Z}$ satisfies %
\begin{align*}
\Varo[\mtx{Z}](\vct{uu}^\transp)
	&= \Varo[\widehat{\mtx{K}}_n](\vct{uu}^\transp)
	= n^{-1} \Varo[ \vct{ww}^\transp ]( \vct{uu}^\transp ) \\
	&= n^{-1} \Var[ \ip{\vct{w}}{\vct{u}}^2 ]
	\leq (\beta^2 / n) \cdot \Var[ \ip{\vct{w}}{\vct{u}} ]
	= \beta^2 / n.
\end{align*}
By a direct application of an easy net argument~\cite[Lem.~5.3]{Tro26:Comparison-Theorems-Minimum},
the statistics of $\mtx{Z}$ satisfy
\[
\phi(\mtx{Z}) \leq \Expect \norm{ \mtx{Z} - \Expect \mtx{Z} }
	\leq \sqrt{12 \beta^2 d/n}
\quad\text{and}\quad
\sigma_*^2(\mtx{Z}) \leq \beta^2 / n.
\]
In particular,
\[
\Expect \lambda_{\min}(\mtx{Z}) \geq \lambda_{\min}(\Expect \mtx{Z}) - \Expect \lambda_{\max}(\mtx{Z} - \Expect \mtx{Z})
	\geq 1 - \sqrt{12 \beta^2 d/n}.
\]
The comparison theorem for the minimum eigenvalue (Corollary~\ref{cor:mineig-main})
delivers %
\[
\Prob{ \lambda_{\min}(\widehat{\mtx{K}}_n) \geq 1 - \sqrt{12 \beta^2 d/n}
	- \sqrt{\left(\tfrac{1}{3} n^{-1}\sqrt{12 \beta^2 d/n} + \beta^2/n \right) \cdot 2s}
	- \tfrac{1}{3} n^{-1} s }
	\leq d \cdot \econst^{-s}.
\]
To achieve failure probability $p$, set $s \coloneqq \log(d/p)$.
When the sample complexity $n$ is chosen via~\eqref{eqn:scov-samples},
the event controls the minimum eigenvalue at the level
$\lambda_{\min}(\widehat{\mtx{K}}_n) \geq 1 - \eps$.
\hfill\qed

\subsection{Sparse dimension reduction: Injectivity}
\label{sec:sparsestack-appl}

Our last application develops a substantial new result on the injectivity 
properties of sparse randomized dimension reduction maps,
confirming part of a conjecture posed by
Nelson \& Nguyen~\cite{NN13:OSNAP-Faster,NN14:Lower-Bounds}.
Our analysis depends on the comparison theorem for the
minimum eigenvalue (Corollary~\ref{cor:mineig-main}).
Its success stems from %
the one-sided hypothesis on the
summands and the %
form of the Gaussian comparison model.

\subsubsection{Subspace embeddings}

Many contemporary algorithms for matrix computations depend
on a primitive called a {subspace embedding}, a linear map that
reduces the dimension of the ambient space while preserving the
geometry of a fixed subspace~\cite{MT20:Randomized-Numerical,KT23:Randomized-Matrix}.

\begin{definition}[Subspace embedding]
Choose a matrix $\mtx{U} \in \C^{n \times d}$ with orthonormal
columns that span a subspace with dimension $d$.
A \term{subspace embedding} for $\range(\mtx{U})$
with \term{embedding dimension} $k$ is a linear map $\mtx{\Phi} : \C^n \to \C^k$
that satisfies the bounds
\begin{equation} \label{eqn:subspace-embed}
0 < 1 - \alpha \leq \sigma_{\min}^2(\mtx{\Phi} \mtx{U})
\quad\text{and}\quad
\sigma_{\max}^2(\mtx{\Phi} \mtx{U}) \leq 1 + \beta.
\end{equation}
In this context, $\sigma_{\min}$ and $\sigma_{\max}$ denote
the smallest and largest singular values of a linear map.
The parameter $\alpha \in [0,1)$ is called the \term{lower distortion}
of the subspace embedding, while the parameter $\beta \geq 0$ is called
the \term{upper distortion}.
\end{definition}

The left-hand inequality in~\eqref{eqn:subspace-embed} is the critical one because it ensures
that the matrix $\mtx{\Phi}$ is an injection---it does not annihilate any vector in the subspace.
To obtain an injection, it is necessary that the embedding dimension exceeds the subspace dimension ($k \geq d$).
In many situations, the target subspace $\range(\mtx{U})$ is not
known in advance, but we can still achieve the subspace embedding
property with high probability for any fixed subspace of dimension $d$
by choosing %
$\mtx{\Phi}$ \hilite{at random}.
In this case, we say that the subspace embedding is \term{oblivious}.

Constructions of (oblivious) subspace embeddings attempt to minimize the
embedding dimension, while controlling the distortion parameters.
We also seek subspace embeddings that have favorable computational properties.
In particular, the cost of forming a matrix--vector product %
$\vct{u} \mapsto \mtx{\Phi} \vct{u}$ should be as small as possible.

\subsubsection{SparseStack and CountSketch}

To achieve the computational goals, we can design random
subspace embeddings that are very {sparse}.
One attractive example is the SparseStack, a sparse dimension reduction map
first proposed by Kane \& Nelson~\cite[Fig.~1(c)]{KN14:Sparser-Johnson-Lindenstrauss}.
There is extensive empirical evidence that SparseStack matrices are effective tools for
matrix computations; for example, see~\cite{CEMT25:Faster-Linear}.
Nevertheless, the analysis has presented an ongoing challenge.

The SparseStack matrix is constructed from a more basic dimension reduction map,
called {CountSketch}~\cite{CCF04:Finding-Frequent}.
For an embedding dimension $b \in \N$, the (complex-valued) \term{CountSketch matrix} $\mtx{S} : \C^n \to \C^b$
takes the form
\begin{equation} \label{eqn:countsketch}
\mtx{S} \coloneqq \sum_{j=1}^n \eps_j \mathbf{E}_{I_j j}
	\in \C^{b \times n}
\quad\text{where}\quad
\begin{aligned}
&\text{$\eps_j \sim \uniform\{ z \in \C : \abs{z} = 1\}$ iid;} \\
&\text{$I_j \sim \uniform\{1, \dots,b\}$ iid.}
\end{aligned}
\end{equation}
As usual, $\mathbf{E}_{ij} \in \C^{b \times n}$ is a standard basis matrix, %
and the Steinhaus variables $\eps_j$ are independent from the random indices $I_j$.
By itself, the CountSketch matrix is not an effective subspace embedding.
Indeed, to obtain an injection with high probability, it is necessary that
the embedding dimension $b \gtrsim d^2$, which is far from the ideal ($b \approx d$).

The \term{SparseStack matrix} combines several CountSketch matrices
to achieve superior dimension reduction properties.
Choose a block size $b \in \N$ and a column sparsity parameter $\zeta \in \N$,
and set the embedding dimension $k \coloneqq \zeta b$.
The SparseStack dimension reduction matrix $\mtx{\Phi} : \C^n \to \C^k$
is obtained by stacking $\zeta$ iid copies of a scaled CountSketch matrix~\eqref{eqn:countsketch}:
\begin{equation} \label{eqn:sparsestack}
\mtx{\Phi} \coloneqq \frac{1}{\sqrt{\zeta}} \begin{bmatrix} & \mtx{S}_1 & \\ & \vdots & \\ & \mtx{S}_\zeta & \end{bmatrix}
	\in \C^{k \times n}
\quad\text{where $\mtx{S}_i \sim \mtx{S} \in \C^{b \times n}$ iid.}
\end{equation}
When the column sparsity $\zeta \ll k$,
the cost of a matrix--vector product
$\vct{u} \mapsto \mtx{\Phi} \vct{u}$
is at most $\mathcal{O}(\zeta n)$ arithmetic operations,
in contrast to the $\mathcal{O}(kn)$ cost when the matrix
is dense and unstructured.

\subsubsection{The Nelson--Nguyen conjecture}

For the SparseStack matrix to serve as an oblivious subspace embedding
with specified distortion parameters, %
what are the minimal column sparsity $\zeta$ and embedding dimension $k$ possible?
In 2013, Nelson \& Nguyen~\cite{NN13:OSNAP-Faster,NN14:Lower-Bounds}
broadcast a conjecture that speaks to this question.

\begin{conjecture}[Nelson--Nguyen] \label{conj:nn}
Fix a subspace dimension $d$ and distortion parameters $\alpha = \beta \in (0,1)$.
There exists a construction of a random sparse matrix $\mtx{\Phi} : \R^n \to \R^k$
with at most $\zeta$ nonzero entries per column and with embedding dimension
$k$ that satisfy
\[
\zeta \leq \mathrm{Const} \cdot \alpha^{-1} \log d
\quad\text{and}\quad
k \leq \mathrm{Const} \cdot \alpha^{-2} d.
\]
Moreover, for any fixed subspace of dimension $d$, with probability at least $1 - d^{-1}$,
this random sparse matrix $\mtx{\Phi}$ serves as a subspace embedding
with distortion parameters $\alpha, \beta$.
\end{conjecture}

So far, Conjecture~\ref{conj:nn} has resisted all efforts.
The next theorem provides the first %
proof
that a SparseStack matrix with the conjectured parameters
satisfies the \hilite{lower} distortion bound.
This result does not address the upper distortion, %
which is less important in practice~\cite{CEMT25:Faster-Linear}.

\begin{theorem}[SparseStack: Injectivity] \label{thm:sparsestack}
Choose an arbitrary orthonormal matrix $\mtx{U} \in \C^{n \times d}$,
a lower distortion parameter $\alpha \in (0,1)$,
and a failure probability $p \in (0,1]$.
Construct the SparseStack matrix $\mtx{\Phi} : \C^n \to \C^k$,
defined in~\eqref{eqn:countsketch} and~\eqref{eqn:sparsestack},
with parameters
\[
\zeta \geq 6 \cdot \alpha^{-1} \log(d/p)
\quad\text{and}\quad
k \geq 16 \cdot \alpha^{-2} (d \vee \log(d/p)).
\]
Then the matrix $\mtx{\Phi}$ acts as an injection on $\range(\mtx{U})$
with high probability and on average:
\[
\Prob{ \sigma^2_{\min}(\mtx{\Phi} \mtx{U}) > 1 - \alpha }
	\geq 1 - p
\quad\text{and}\quad
\Expect \sigma^2_{\min}(\mtx{\Phi} \mtx{U}) \geq 1 - \alpha.
\]
\end{theorem}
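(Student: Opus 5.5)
The plan is to apply the minimum-eigenvalue comparison (Corollary~\ref{cor:mineig-main}) to the Gram matrix $\mtx{Y} \coloneqq (\mtx{\Phi}\mtx{U})^*(\mtx{\Phi}\mtx{U}) \in \Sym_d(\C)$. Its minimum eigenvalue equals $\sigma_{\min}^2(\mtx{\Phi}\mtx{U})$.

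\emph{Step 1: independent psd summands and the one-sided bound.} By the stacked form~\eqref{eqn:sparsestack}, we can write
\[
\mtx{Y} = \sum_{i=1}^{\zeta} \mtx{W}_i,
\qquad
\mtx{W}_i \coloneqq \zeta^{-1}\, \mtx{U}^* \mtx{S}_i^* \mtx{S}_i \mtx{U},
\]
which is a sum of iid random psd matrices. For a single CountSketch, the Steinhaus variables give $\Expect[\bar\eps_j \eps_l] = \delta_{jl}$, so $\Expect[\mtx{U}^*\mtx{S}^*\mtx{S}\mtx{U}] = \mtx{U}^*\mtx{U} = \Id_d$. Since each $\mtx{W}_i \psdge \mtx{0}$, we get $\lambda_{\min}(\mtx{W}_i - \Expect \mtx{W}_i) \geq -\zeta^{-1}$. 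So the corollary applies with $R_- \coloneqq \zeta^{-1}$. This one-sided control is the only bound on the summands we need; it is tiny even though $\norm{\mtx{W}_i}$ can be large.

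\emph{Step 2: the Gaussian proxy.} Let $\vct{u}_j^*$ denote the $j$th row of $\mtx{U}$. Then
\[
\mtx{U}^*\mtx{S}^*\mtx{S}\mtx{U} - \Id = \sum_{j \neq l} \bar\eps_j \eps_l \, \indicator\{I_j = I_l\} \, \vct{u}_j \vct{u}_l^*.
\]
For self-adjoint $\mtx{M}$, the Steinhaus variables make the terms indexed by distinct unordered pairs uncorrelated, and $\Probe\{I_j = I_l\} = b^{-1}$. A short calculation should give
\[
\Varo[\mtx{U}^*\mtx{S}^*\mtx{S}\mtx{U}](\mtx{M}) \leq b^{-1} \sum_{j,l} \abs{\vct{u}_l^* \mtx{M} \vct{u}_j}^2 = b^{-1}\trace[\mtx{M}\mtx{U}^*\mtx{U}\mtx{M}\mtx{U}^*\mtx{U}] = b^{-1}\fnormsq{\mtx{M}}.
\]
Here I must carefully handle the real part and the pairing of $(j,l)$ with $(l,j)$; I expect this to yield $\leq$, possibly with equality. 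Summing the $\zeta$ scaled copies, $\Varo[\mtx{Y}] \psdle (\zeta b)^{-1}\fnormsq{\cdot} = k^{-1}\fnormsq{\cdot}$. This is the variance function of $\mtx{Z}' \coloneqq \Id_d + k^{-1/2}\mtx{X}_{\gue}$. By the remark after Theorem~\ref{thm:maxeig-main} (Gaussian monotonicity, Fact~\ref{fact:gauss-mono}, with equal means), we may replace the proxy by $\mtx{Z}'$. Section~\ref{sec:gue} then gives
\[
\Expect\lambda_{\min}(\mtx{Z}') \geq 1 - 2\sqrt{d/k},
\qquad
\phi(\mtx{Z}') \leq 2\sqrt{d/k},
\qquad
\sigma_*^2(\mtx{Z}') = k^{-1}.
\]

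\emph{Step 3: arithmetic.} Take $s \coloneqq \log(d/p)$. The tail bound gives, with probability at least $1-p$,
\[
\lambda_{\min}(\mtx{Y}) > 1 - 2\sqrt{d/k} - \sqrt{\bigl(\tfrac{2}{3\zeta}\sqrt{d/k} + k^{-1}\bigr)\cdot 2s} - \tfrac{s}{3\zeta}.
\]
The hypothesis on $k$ yields $2\sqrt{d/k} \leq \alpha/2$ and $2s/k \leq \alpha^2/8$. The hypothesis on $\zeta$ yields $s/(3\zeta) \leq \alpha/18$ and $\tfrac{2}{3\zeta}\sqrt{d/k}\cdot 2s \leq \alpha^2/18$. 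Hence the square root is at most $\alpha\sqrt{1/18 + 1/8} < 0.43\alpha$, and the total deficit is below $0.98\alpha < \alpha$. The expectation bound follows in the same way from the expectation statement of Corollary~\ref{cor:mineig-main}, with $\log d \leq \log(d/p)$ in place of $s$.

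The main obstacle is Step 2: obtaining the clean domination $\Varo[\mtx{Y}] \psdle k^{-1}\fnormsq{\cdot}$ with no extra dimensional factor. This requires handling the complex phases and the diagonal $j=l$ terms exactly. It is precisely this GUE-type proxy that lets the sparsity $\zeta$ enter only through $R_-$.
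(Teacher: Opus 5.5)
Your proposal is correct and follows the paper's proof step for step. The paper uses the same ingredients: the decomposition $\mtx{Y} = \zeta^{-1}\sum_i \mtx{U}^*\mtx{S}_i^*\mtx{S}_i\mtx{U}$ with $R_- = \zeta^{-1}$; the same Steinhaus pairing, where only $(j',l')=(j,l)$ survives because $\Expect[\bar\eps_j^2\eps_l^2]=0$, so the variance is exactly $b^{-1}\sum_{j\neq l}\abs{\vct{u}_l^*\mtx{M}\vct{u}_j}^2 \leq b^{-1}\fnormsq{\mtx{M}}$; the dominating proxy $\Id_d + k^{-1/2}\mtx{X}_{\gue}$; and $s=\log(d/p)$ in Corollary~\ref{cor:mineig-main}. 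One trivial arithmetic nit: $\tfrac12+\sqrt{13/72}+\tfrac1{18}\approx 0.9805$, so the deficit is below $0.99\alpha$ rather than $0.98\alpha$, which still suffices.
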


\noindent
The proof of Theorem~\ref{thm:sparsestack} appears in the next subsection.

The author's recent papers~\cite{Tro26:Comparison-Theorems-Minimum,CEMT25:Faster-Linear}
use the psd comparison theorem (Fact~\ref{fact:mineig-psd})
to show that the SparseStack matrix achieves
the \hilite{lower} distortion bound under the stricter assumptions that
$\zeta \gtrsim \alpha^{-2} \log d$ and $k \gtrsim \alpha^{-2} d$.
We can surpass the earlier results because Corollary~\ref{cor:mineig-main}
employs a more refined comparison model.
The lengthy analysis in~\cite{CDD26:Optimal-Subspace} obtains bounds
on both the \hilite{lower and upper} distortion,
but that approach falls short of the desired parameter scalings
(by sub-logarithmic factors).

\subsubsection{Proof of Theorem~\ref{thm:sparsestack}}

Introduce the random self-adjoint matrix
\[
\mtx{Y} \coloneqq \mtx{U}^* \mtx{\Phi}^* \mtx{\Phi} \mtx{U}
	= \frac{1}{\zeta} \sum_{i = 1}^\zeta \mtx{U}^* \mtx{S}_i^* \mtx{S}_i \mtx{U}
	\eqqcolon \frac{1}{\zeta} \sum_{i=1}^\zeta \mtx{W}_i
	\in \Sym_d(\C).
\]
The summands $\mtx{W}_i$ are iid copies of the random matrix $\mtx{W} \coloneqq \mtx{U}^*\mtx{S}^* \mtx{S} \mtx{U}$,
where $\mtx{S} \in \C^{b \times n}$ is a complex CountSketch matrix~\eqref{eqn:countsketch} with block size $b$.
The minimum eigenvalue of $\mtx{Y}$ controls the lower distortion of the embedding:
\[
\lambda_{\min}(\mtx{Y}) = \sigma_{\min}^2(\mtx{\Phi} \mtx{U}).
\]
Thus, we can prove the theorem by developing lower bounds
for $\lambda_{\min}(\mtx{Y})$.

Let us begin with the expectation of an individual summand $\mtx{W}$.
A short argument confirms that the CountSketch matrix is isotropic:
\begin{equation} \label{eqn:countsketch-iso}
\Expect[ \mtx{S}^* \mtx{S} ] = \sum_{i,j=1}^n \Expect[ \eps_i^* \eps_j ] \cdot \Expect[ \delta_{I_i I_j} ] \cdot \mathbf{E}_{ij}
	= \Id_n.
\end{equation}
Indeed, only paired indices ($i = j$) survive in the expectation.
The symbol ${}^*$ denotes the complex conjugate, while $\delta_{\cdot\cdot}$ is the Kronecker delta.
Using~\eqref{eqn:countsketch-iso},
it is straightforward to check that each summand is isotropic:
\[
\Expect \mtx{W} = \mtx{U}^* \cdot \Expect[ \mtx{S}^* \mtx{S} ] \cdot \mtx{U}
	= \mtx{U}^* \mtx{U} = \Id_d.
\]
Since each summand $\zeta^{-1} \mtx{W}$ is also psd,
it satisfies the bound %
\[
\zeta^{-1} \lambda_{\min}(\mtx{W} - \Expect \mtx{W})
	\geq \zeta^{-1} \lambda_{\min}( - \Id_d)
	= - \zeta^{-1}.
\]
Therefore, we can set the lower bound parameter $R_- \coloneqq \zeta^{-1}$.

To obtain the variance function of the summand $\mtx{W}$,
fix a self-adjoint matrix $\mtx{M} \in \Sym_d(\C)$,
and define the transformed matrix
$\widetilde{\mtx{M}} \coloneqq \mtx{U} \mtx{M} \mtx{U}^* \in \Sym_n(\C)$.
Then
\begin{align*}
\Var[ \mtx{W} ](\mtx{M})
	&= \Expect \labs{ \ip{ \mtx{W} - \Id_d }{ \mtx{M} } }^2
	= \Expect \labs{ \ip{ \mtx{S}^*\mtx{S} - \Id_n }{ \smash{\widetilde{\mtx{M}}} } }^2
	= \Expect \labs{ \sum_{i \neq j} \eps_i^* \eps_j \delta_{I_i I_j} \widetilde{m}_{ij} }^2 \\
	&= \sum_{i \neq j, i' \neq j'} \Expect[ \eps_i^* \eps_j \eps_{i'} \eps_{j'}^* ] \cdot \Expect[ \delta_{I_i I_j} \delta_{I_{i'} I_{j'}}] \cdot \widetilde{m}_{ij} \widetilde{m}_{i'j'}^* \\
	&= b^{-1} \sum_{i \neq j } \abssq{\widetilde{m}_{ij}}
	\leq b^{-1} \fnormsq{ \widetilde{\mtx{M}} }
	= b^{-1} \fnormsq{\mtx{M}}.
\end{align*}
Indeed, the only summands with nonzero expectation have paired indices ($i' = i$ and $j' = j$).
We recognize that the Gaussian matrix
\[
\mtx{X} \coloneqq \Id_d + b^{-1/2} \mtx{X}_{\gue} \in \Sym_d(\C) %
\]
has the same expectation and greater variance function than the Gaussian
distribution $\normal(\Expect[\mtx{W}], \Varo[\mtx{W}])$;
cf.~\eqref{eqn:gauss-proxy-dom}.
We are exploiting the fact that Gaussian
matrix statistics increase with the variance function.

Altogether, for the random matrix $\mtx{Y} = \zeta^{-1} \sum_{i=1}^\zeta \mtx{W}_i$,
the Gaussian proxy described in~Corollary~\ref{cor:mineig-main} is dominated by the Gaussian matrix
\[
\mtx{Z} \coloneqq \frac{1}{\zeta} \sum_{i=1}^\zeta \mtx{X}_i \sim \Id_d + k^{-1/2} \mtx{X}_{\gue} %
\quad\text{where $\mtx{X}_i \sim \mtx{X}$ iid.}
\]
Recall that the embedding dimension $k = \zeta b$.
The statistics of the Gaussian matrix $\mtx{Z}$ are familiar to us (Section~\ref{sec:gue}):
\[
\Expect \lambda_{\min}(\mtx{Z}) \geq 1 - 2 \sqrt{d/k};
\quad %
\phi(\mtx{Z}) = \Expect \lambda_{\max}(\mtx{Z} - \Expect \mtx{Z}) \leq 2 \sqrt{d/k};
\quad %
\sigma_*^2(\mtx{Z}) \leq k^{-1}.
\]
We may now activate the comparison theorem for the minimum eigenvalue (Corollary~\ref{cor:mineig-main}): %
\[
\Prob{ \lambda_{\min}(\mtx{Y}) \leq 1 - \left[ 2 \sqrt{d/k} + \sqrt{ \left( \tfrac{2}{3} \zeta^{-1} \sqrt{d/k} + k^{-1}\right) \cdot 2s} + \tfrac{1}{3} \zeta^{-1} s \right] } \leq d \cdot \econst^{-s}.
\]
To achieve failure probability $p \in (0,1)$, set the tail parameter $s = \log(d / p)$.
The lower distortion is at most $\alpha$ when the column sparsity
$\zeta \geq 6 \alpha^{-1} \log(d/p)$ and the embedding dimension
$k \geq 16 \alpha^{-2} (d \vee \log(d/p))$.
The same parameters serve for the expectation bound.
\hfill\qed

\begin{remark}[The psd comparison theorem]
To analyze the SparseStack matrix, the papers~\cite{Tro26:Comparison-Theorems-Minimum,CEMT25:Faster-Linear} employ
Fact~\ref{fact:mineig-psd} from the companion paper~\cite[Thm.~2]{Tro26:Comparison-Theorems-Minimum}.
For this problem, the new comparison theorem (Corollary~\ref{cor:mineig-main}) provides superior bounds.
Nevertheless, the two comparison theorems are incomparable.
\end{remark}

\section{Comparison for the maximum eigenvalue: Proof}
\label{sec:maxeig-proof}

This section contains the proof of the main result (Theorem~\ref{thm:maxeig-main}),
along with the background required for the argument.
To set the stage, we restate %
the result in a form that aligns with the proof strategy.

\begin{theorem}[Comparison: Maximum eigenvalue, version 2] \label{thm:maxeig-compare}
Consider two independent sums of random self-adjoint matrices, taking values in $\Sym_d(\F)$ and with two finite moments:
\begin{align*}
\mtx{Y} &\coloneqq %
\sum_{i=1}^n \mtx{W}_i
\quad\text{where $\Expect[ \mtx{W}_i ] = \mtx{0}$ and $\lambda_{\max}(\mtx{W}_i) \leq R$;} \\
\mtx{Z} &\coloneqq %
\sum_{i=1}^n \mtx{X}_i
\quad\text{where $\mtx{X}_i \sim \normal(\mtx{0}, \Varo[\mtx{W}_i])$.}
\end{align*}
Choose any deterministic self-adjoint matrix $\mtx{\Delta} \in \Sym_d(\F)$,
and define the statistics
\[
\mu \coloneqq \Expect \lambda_{\max}(\mtx{Z} + \mtx{\Delta})
\quad\text{and}\quad
\phi \coloneqq \Expect \lambda_{\max}(\mtx{Z})
\quad\text{and}\quad
\sigma_*^2 \coloneqq \sup\nolimits_{\norm{\vct{u}} = 1} \Var[ \vct{u}^* \mtx{Z} \vct{u} ].
\]
Then the expectation of the maximum eigenvalue satisfies the comparison
\[
\Expect \lambda_{\max}(\mtx{Y} + \mtx{\Delta})
	\leq %
		\mu + \sqrt{\left(\tfrac{1}{3} R \phi + \sigma_*^2\right) \cdot 2\log d}
		+ \tfrac{1}{3} R \log d.
\]
Furthermore, for each $s \geq 0$, the tail probability satisfies the bound
\begin{align*}
\Prob{ \lambda_{\max}(\mtx{Y} + \mtx{\Delta}) \geq \mu + \sqrt{\left(\tfrac{1}{3}R \phi + \sigma_*^2\right) \cdot 2s} + \tfrac{1}{3} Rs }
	\leq d \cdot \econst^{-s}.
\end{align*}
\end{theorem}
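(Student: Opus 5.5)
We will prove Theorem~\ref{thm:maxeig-compare} by a Lindeberg exchange argument applied to the trace mgf, followed by the standard matrix Laplace transform method. For $\theta > 0$, the goal is the mgf comparison
\[
\Expect \trace \econst^{\theta(\mtx{Y} + \mtx{\Delta})}
	\leq \exp\!\big( g(\theta) \big) \cdot \Expect \trace \econst^{\theta(\mtx{Z}+\mtx{\Delta})},
\]
where $g(\theta) \coloneqq \frac{\theta^2}{2}\cdot\frac{(\tfrac13 R\phi + \ldots)}{1 - R\theta/3}$ is a Bernstein-type exponent. We then bound $\Expect \trace \econst^{\theta(\mtx{Z}+\mtx{\Delta})} \leq d\, \Expect \econst^{\theta \lambda_{\max}(\mtx{Z}+\mtx{\Delta})} \leq d\, \econst^{\theta\mu + \theta^2\sigma_*^2/2}$ using Gaussian concentration (Fact~\ref{fact:gauss-conc}). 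Markov's inequality and optimization over $\theta$ then give the tail bound. The expectation bound follows in the usual way from $\theta \Expect\lambda_{\max} \leq \log \Expect \trace \econst^{\theta(\cdot)}$.

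For the exchange step, we replace the summands one at a time. Set $\mtx{A}_i \coloneqq \mtx{\Delta} + \sum_{j<i}\mtx{W}_j + \sum_{j>i}\mtx{X}_j$, which is independent of both $\mtx{W}_i$ and $\mtx{X}_i$. We must compare $\Expect \trace \econst^{\theta(\mtx{A}_i+\mtx{W}_i)}$ with $\Expect \trace \econst^{\theta(\mtx{A}_i+\mtx{X}_i)}$. Here Stahl's theorem enters. Conditional on $\mtx{A}_i$, for a fixed direction $\mtx{H}$ we have $t \mapsto \trace \econst^{\theta\mtx{A}_i + t\mtx{H}} = \int \econst^{\lambda t}\idiff\nu(\lambda)$ with $\nu \geq 0$ supported in $[\lambda_{\min}(\mtx{H}),\lambda_{\max}(\mtx{H})]$. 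Applying this with $\mtx{H} = \theta\mtx{W}_i$ and $t=1$, the scalar inequality $\econst^{x} \leq 1 + x + \frac{x^2}{2}\cdot\frac{1}{1-x/3}$ for $x \leq \theta R < 3$ applies under the measure to the support points $\lambda \leq \theta R$. This yields a second-order Taylor bound with a Bernstein remainder:
\[
\trace \econst^{\theta(\mtx{A}+\mtx{W})} \leq \trace\econst^{\theta\mtx{A}} + \theta\,\Diff[\trace\econst^{\cdot}](\theta\mtx{A})[\mtx{W}] + \frac{\theta^2/2}{1-\theta R/3}\, \Diff^2[\trace\econst^{\cdot}](\theta\mtx{A})[\mtx{W},\mtx{W}].
\]
The key point is that only $\lambda_{\max}(\mtx{W})\leq R$ is needed, since the support of $\nu$ lies below $\theta\lambda_{\max}(\mtx{W}_i)$. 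After taking expectation over $\mtx{W}_i$, the first-order term vanishes. The second-order term depends only on $\Expect[\mtx{W}_i\otimes\mtx{W}_i]$, which equals the Gaussian one by Fact~\ref{fact:moment-tensor}. For the Gaussian side we need a matching lower bound. By Gaussian integration by parts (or convexity of $t\mapsto \trace\econst^{\mtx{A}+t\mtx{X}}$ together with its evenness in the Gaussian), we have $\Expect_{\mtx{X}}\trace\econst^{\theta(\mtx{A}+\mtx{X})} \geq \trace\econst^{\theta\mtx{A}} + \frac{\theta^2}{2}\Expect\Diff^2[\ldots](\theta\mtx{A})[\mtx{X},\mtx{X}]$. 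It would be cleaner still to use a continuous interpolation $\mtx{A}+\sqrt{s}\mtx{X}$, so that the Gaussian second-order term appears exactly as a derivative in $s$.

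The remaining task is to control the excess $\frac{\theta^2}{2}\big(\frac{1}{1-\theta R/3}-1\big)\Expect\Diff^2[\ldots]$ by a multiple of the trace mgf itself, so that a Grönwall argument over the $n$ exchanges closes. Since $\frac{1}{1-\theta R/3}-1 = \frac{\theta R/3}{1-\theta R/3}$, the summed second-order Gaussian terms $\sum_i \Expect \Diff^2[\ldots][\mtx{X}_i,\mtx{X}_i]$ should equal, by Gaussian integration by parts, $\theta^{-1}\partial$ of the mgf along the variance. This is of the form $\theta\,\Expect\trace[\mtx{Z}\econst^{\theta(\mtx{Z}+\cdots)}]$, which is at most about $\theta(\phi + \theta\sigma_*^2)\cdot$ mgf by Gaussian concentration. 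This is where $\tfrac13 R\phi$ arises alongside $\sigma_*^2$. I expect this step to be the main obstacle: bounding the Gaussian ``energy'' $\Expect\trace[\mtx{Z}\econst^{\theta\mtx{Z}}]$ relative to the mgf by $\phi$ plus a $\sigma_*^2$ correction, uniformly along the hybrid sequence in which some summands are still non-Gaussian. My first attempt would be to run the exchange backwards, from $\mtx{Z}$ to $\mtx{Y}$, through a differential inequality in the interpolation parameter, so that the mixed matrices are always dominated by the Gaussian mgf at a slightly inflated variance.
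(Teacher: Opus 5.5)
\textbf{There is a genuine gap, at exactly the step you flag.} Your one-step Stahl bound on the $\mtx{W}$ side is correct; it is the Bennett-type estimate the paper also uses.

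The trouble is that you exchange $\mtx{W}_i$ for $\mtx{X}_i$ \emph{at the same scale} $\theta$. Each exchange then leaves a remainder
\[
\tfrac{\theta^2}{2}\cdot\tfrac{\theta R/3}{1-\theta R/3}\,\Expect \Diff^2[\trace\econst^{\cdot}](\theta\mtx{A}_i)[\mtx{X}_i,\mtx{X}_i],
\]
where $\mtx{A}_i$ is a hybrid that still contains the non-Gaussian $\mtx{W}_j$ for $j<i$. Nothing in your argument bounds the sum of these remainders by the trace mgf.

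Your suggested route does not supply this bound. The plan is to use integration by parts and Fact~\ref{fact:gauss-conc} to show $\Expect\trace[\mtx{Z}\econst^{\theta\mtx{Z}}] \lesssim (\phi+\theta\sigma_*^2)\,\Expect\trace\econst^{\theta\mtx{Z}}$. But Gaussian concentration bounds the log-mgf from above, not its derivative. A convexity argument that trades a derivative for a difference quotient reintroduces a $\log d$. In any case, the hybrids $\mtx{A}_i$ are not Gaussian, so neither tool applies to them directly.

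Your intermediate target $\Expect\trace\econst^{\theta(\mtx{Y}+\mtx{\Delta})}\le\econst^{g(\theta)}\Expect\trace\econst^{\theta(\mtx{Z}+\mtx{\Delta})}$ is also stronger than the theorem requires.

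A smaller point: convexity plus evenness does not give the second-order lower bound on the Gaussian side. You need that $t\mapsto f'''(t;\mtx{X})$ is increasing, which is $f^{(4)}\geq 0$ from Stahl, together with $\Expect f'''(0;\mtx{X})=0$ by symmetry.

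\textbf{The missing idea is to absorb the excess into the Gaussian by inflating it, so that no remainder ever appears.} By homogeneity, the Gaussian lower bound for $g\mtx{X}$ has second-order term $\frac{g^2}{2}\Expect\Diff^2[\trace\econst^{\cdot}](\mtx{B})[\mtx{X},\mtx{X}]$. Choose $g^2=\theta^2/(1-\theta R/3)$; the paper uses the sharper $g_R(\theta)^2=2(\econst^{\theta R}-\theta R-1)/R^2$. This term then matches your upper bound exactly, because the second moments agree. Hence, for every fixed $\mtx{B}$,
\[
\Expect\trace\econst^{\mtx{B}+\theta\mtx{W}} \le \Expect\trace\econst^{\mtx{B}+g\mtx{X}} .
\]
Lindeberg telescoping, with the hybrids built from $\theta\mtx{W}_j$ and $g\mtx{X}_j$, then closes with no remainder:
\[
\Expect\trace\econst^{\theta(\mtx{Y}+\mtx{\Delta})}\le\Expect\trace\econst^{g\mtx{Z}+\theta\mtx{\Delta}}.
\]

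To finish, bound the right-hand side by $d\,\Expect\econst^{\lambda_{\max}(g\mtx{Z}+\theta\mtx{\Delta})}$. Gaussian concentration contributes a factor $\econst^{g^2\sigma_*^2/2}$. Weyl's inequality gives $\Expect\lambda_{\max}(g\mtx{Z}+\theta\mtx{\Delta})\le (g-\theta)\phi+\theta\mu$. Since $g-\theta\le \frac{\theta^2R/6}{1-\theta R/3}$, you obtain
\[
\log\Expect\trace\econst^{\theta(\mtx{Y}+\mtx{\Delta})}\le \log d+\theta\mu+\frac{(\theta^2/2)(R\phi/3+\sigma_*^2)}{1-\theta R/3}.
\]
The stated expectation and tail bounds then follow by the standard Bernstein optimization over $\theta$. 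So the $\tfrac13 R\phi$ term comes from Weyl's inequality applied to the inflated Gaussian, not from a Gaussian energy estimate along the hybrid sequence.
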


Theorem~\ref{thm:maxeig-compare} modifies the statement of Theorem~\ref{thm:maxeig-main}
in two places.
First, we require the summands $\mtx{W}_i$ to be centered,
and the matrix $\mtx{\Delta}$ explicitly models the expectation of the sum. %
Second, we express the Gaussian proxy $\mtx{Z}$ as an independent sum of
Gaussian matrices $\mtx{X}_i$, where the statistics of $\mtx{X}_i$
match the statistics of $\mtx{W}_i$.  By the stability of the Gaussian distribution,
this approach leads to the same Gaussian proxy as in Theorem~\ref{thm:maxeig-main}.

\subsection{Overview of the proof}

The argument depends on some
strategies from the
theory of matrix concentration~\cite{Tro15:Introduction-Matrix}.
We can control the maximum eigenvalue of the sum $\mtx{Y}$ by passing
to its trace mgf. %
To compare the trace mgf of the sum $\mtx{Y}$ with the trace mgf of the Gaussian proxy $\mtx{Z}$,
we use Lindeberg's method to exchange one pair of summands $(\mtx{W}_i, \mtx{X}_i)$ at each step.
To bound the magnitude of this change, we require detailed information
about the derivatives of the trace mgf,
which we extract from Stahl's theorem~\cite{Sta13:Proof-BMV}.
Afterward, we use Gaussian concentration to express
the results in terms of statistics of the Gaussian proxy.

Lindeberg's method has several applications in random matrix
theory~\cite{Cha06:Generalization-Lindeberg,TV10:Random-Matrices,TV11:Random-Matrices,
KM11:Applications-Lindeberg,Tao19:Least-Singular,OT17:Universality-Laws,CDB+24:Sparse-Random}.
The distinctive feature of our approach is the novel appeal to Stahl's theorem.

\subsection{Extensions}

Let us mention several potential directions for extending
Theorem~\ref{thm:maxeig-compare}.  These avenues may inspire
further research.

\begin{itemize}
\item	We might sharpen the theorem for an independent
sum $\mtx{Y}$ whose summands have symmetric distributions
or whose moments match the Gaussian distribution to higher order.

\item	In fact, the Gaussian distribution of the comparison model
plays only a modest role in the argument, and we can contemplate
other types of comparison models.

\item	Classic matrix concentration results have
variants that depend on the \term{intrinsic dimension}
of the random matrix model, rather than the nominal
dimension of the matrices~\cite{Min17:Some-Extensions,Tro15:Introduction-Matrix,MR26:Intrinsic-Dimension}.
It would be valuable to explore analogous improvements to
Theorem~\ref{thm:maxeig-compare}.

\item	The proof of Theorem~\ref{thm:maxeig-compare}
depends on a comparison of exponential moments.
It may also be possible to compare polynomial moments
of random matrix models by exploiting Hein{\"a}vaara's
extension~\cite[Cor.~1.2]{Hei25:Tracial-Joint} of Stahl's theorem.

\item	Section~\ref{sec:freedman} contains an extension
of Theorem~\ref{thm:maxeig-compare} to matrix martingales.
It may be productive to develop additional tools for
studying matrix martingales.
\end{itemize}

\subsection{Background: Matrix Laplace transform}

We employ basic results from the matrix Laplace transform method~\cite[Sec.~3.2]{Tro15:Introduction-Matrix}.
This approach allows us to obtain probability bounds for the
maximum eigenvalue of a random self-adjoint matrix by passing to
the expectation of the trace exponential of the random matrix,
often called the \term{trace mgf}.

\begin{fact}[Matrix Laplace transform method] \label{fact:matrix-lt}
Let $\mtx{Y}$ be a random self-adjoint matrix whose maximum eigenvalue
is integrable.  Then the maximum eigenvalue satisfies %
\begin{align*}
\Expect  \lambda_{\max}(\mtx{Y})  &\leq \inf\nolimits_{\theta > 0}
\frac{1}{\theta} \log \Expect \trace \econst^{\theta \mtx{Y}}; \\
\Prob{ \lambda_{\max}(\mtx{Y}) \geq t }
	&\leq \inf\nolimits_{\theta > 0} \exp\left( -\theta t + \log \Expect \trace \econst^{\theta \mtx{Y} } \right)
	\quad\text{for $t \in \R$.}
\end{align*}
The matrix exponential function is defined via its Taylor series.
These results are nontrivial whenever the expectation on the
right-hand side is finite at some $\theta > 0$.
\end{fact}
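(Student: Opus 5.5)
The plan is to reduce both inequalities to the scalar Laplace transform bounds, using a single deterministic comparison between the maximum eigenvalue and the trace exponential. Fix $\theta > 0$. For any self-adjoint matrix $\mtx{A}$, the spectral mapping theorem gives the eigenvalues of $\econst^{\theta \mtx{A}}$ as $\econst^{\theta \lambda_j(\mtx{A})}$. These are all positive, and the map $\lambda \mapsto \econst^{\theta\lambda}$ is increasing. Hence
\[
\econst^{\theta \lambda_{\max}(\mtx{A})} = \lambda_{\max}\big(\econst^{\theta \mtx{A}}\big) \leq \trace \econst^{\theta \mtx{A}},
\]
because the trace is the sum of these positive eigenvalues. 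I apply this pointwise with $\mtx{A} = \mtx{Y}$ and take expectations to obtain $\Expect \econst^{\theta \lambda_{\max}(\mtx{Y})} \leq \Expect \trace \econst^{\theta \mtx{Y}}$. This is the only matrix-specific step; everything else is scalar.

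For the tail bound, I fix $t \in \R$. The event $\{\lambda_{\max}(\mtx{Y}) \geq t\}$ coincides with $\{\econst^{\theta\lambda_{\max}(\mtx{Y})} \geq \econst^{\theta t}\}$, so Markov's inequality gives
\[
\Prob{\lambda_{\max}(\mtx{Y}) \geq t} \leq \econst^{-\theta t}\, \Expect \econst^{\theta\lambda_{\max}(\mtx{Y})} \leq \exp\big(-\theta t + \log \Expect \trace \econst^{\theta\mtx{Y}}\big).
\]
Taking the infimum over $\theta > 0$ finishes this part.

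For the expectation bound, I use the hypothesis that $\lambda_{\max}(\mtx{Y})$ is integrable, so that $\theta \Expect\lambda_{\max}(\mtx{Y})$ is a finite real number. Since $\log$ is concave and $\theta\lambda_{\max}(\mtx{Y}) = \log \econst^{\theta\lambda_{\max}(\mtx{Y})}$, Jensen's inequality gives
\[
\theta\, \Expect \lambda_{\max}(\mtx{Y}) \leq \log \Expect \econst^{\theta\lambda_{\max}(\mtx{Y})} \leq \log \Expect \trace \econst^{\theta \mtx{Y}}.
\]
Equivalently, Jensen applied to the convex exponential yields $\econst^{\theta \Expect\lambda_{\max}} \leq \Expect \econst^{\theta\lambda_{\max}}$, and I then take logarithms. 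Dividing by $\theta > 0$ and taking the infimum over $\theta$ yields the claim.

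The only point needing care, rather than a genuine obstacle, is the convention when $\Expect \trace \econst^{\theta\mtx{Y}} = +\infty$ for some $\theta$. In that case the right-hand side at that $\theta$ is $+\infty$ and the inequality is vacuous, which is why the statement says the bounds are nontrivial only when the trace mgf is finite at some $\theta > 0$. I also note that $\trace \econst^{\theta\mtx{Y}}$ is a positive random variable, so its expectation always exists in $(0, +\infty]$.
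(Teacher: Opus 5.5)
Your proof is correct and is essentially the standard argument that the paper defers to: it cites \cite[Sec.~3.2]{Tro15:Introduction-Matrix} instead of proving the fact. That argument uses the pointwise bound $\econst^{\theta\lambda_{\max}(\mtx{Y})} = \lambda_{\max}(\econst^{\theta\mtx{Y}}) \leq \trace \econst^{\theta\mtx{Y}}$ from the spectral mapping theorem, followed by Markov's inequality for the tail and Jensen's inequality for the expectation, and your handling of integrability and the $+\infty$ convention is exactly the care needed.
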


\subsection{Background: The trace exponential and its derivatives}

The proof hinges on deep properties of the trace exponential function.
Stahl's theorem~\cite{Sta13:Proof-BMV}, formerly the BMV conjecture~\cite{BMV75:Monotonic-Converging},
asserts that the trace exponential along a line coincides with the
Laplace transform of a positive measure.

\begin{fact}[Stahl's theorem] \label{fact:stahl}
Fix self-adjoint matrices $\mtx{A}, \mtx{H}$ with the same dimension. %
Then
\[
f(t) \coloneqq \trace \econst^{\mtx{A} + t \mtx{H}} = \int_{\lambda_{\min}(\mtx{H})}^{\lambda_{\max}(\mtx{H})} \econst^{t \lambda} \idiff{\nu}(\lambda)
	\quad\text{for $t \in \R$.}
\]
The finite, positive Borel measure $\nu \coloneqq \nu_{\mtx{A},\mtx{H}}$
is supported on the interval $[\lambda_{\min}(\mtx{H}), \lambda_{\max}(\mtx{H})]$.
This measure is uniquely determined by the two matrices.
\end{fact}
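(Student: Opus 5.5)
The plan is to separate the statement into its deep core and two routine consequences, and to invoke Stahl~\cite{Sta13:Proof-BMV} only for the core. I have not reproduced the deep step here, so this part of the plan is only a sketch.

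\textbf{Step 1: reduce to a psd direction.} Set $a \coloneqq \lambda_{\min}(\mtx{H})$ and $\mtx{B} \coloneqq \mtx{H} - a\Id \psdge \mtx{0}$. Since $a\Id$ commutes with everything,
\[
f(t) = \econst^{ta}\, g(t)
\quad\text{where}\quad
g(t) \coloneqq \trace \econst^{\mtx{A} + t\mtx{B}}.
\]
It therefore suffices to show $g(t) = \int \econst^{t\lambda} \idiff{\rho}(\lambda)$ for a positive measure $\rho$ supported on $[0, \lambda_{\max}(\mtx{H}) - a]$. We then take $\nu$ to be $\rho$ translated by $a$.

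\textbf{Step 2: the deep step.} By Bernstein's theorem on completely monotone functions, the existence of such a $\rho$ on $[0,\infty)$ is equivalent to the BMV property: the map $s \mapsto \trace \econst^{\mtx{A} - s\mtx{B}}$ is completely monotone on $(0,\infty)$. I would invoke Stahl's proof for this. The eigenvalues of $\mtx{A} + z\mtx{B}$, $z \in \C$, are branches of an algebraic function on a compact Riemann surface. So $g$ extends to an entire function of exponential type, given by a sum of $\econst^{\lambda_j(z)}$ over the sheets. One then recovers the density of $\rho$ as an inverse Laplace (Fourier) transform along a vertical line, deforms the contour onto cuts on the Riemann surface, and shows the resulting integrand is nonnegative.

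\textbf{Step 3: support.} Given positivity, Weyl's inequalities give $\lambda_{\max}(\mtx{A} + t\mtx{B}) \le \lambda_{\max}(\mtx{A}) + t\lambda_{\max}(\mtx{B})$, so
\[
g(t) \le d\, \econst^{\lambda_{\max}(\mtx{A}) + t\lambda_{\max}(\mtx{B})}
\quad\text{for all } t > 0.
\]
If $\rho$ charged $(\lambda_{\max}(\mtx{B}) + \eps, \infty)$ for some $\eps > 0$, then $g$ would grow at exponential rate at least $\lambda_{\max}(\mtx{B}) + \eps$, which contradicts this bound. Hence $\rho$ lives on $[0, \lambda_{\max}(\mtx{B})]$, so $\nu$ lives on $[\lambda_{\min}(\mtx{H}), \lambda_{\max}(\mtx{H})]$.

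\textbf{Step 4: finiteness and uniqueness.} Finiteness is immediate from $\nu(\R) = f(0) = \trace \econst^{\mtx{A}} < \infty$. For uniqueness, a finite measure on a compact interval has an entire Laplace transform. Two such measures with the same transform on $\R$ have transforms that agree on all of $\C$, in particular on the imaginary axis. So their Fourier transforms coincide, and the measures are equal.

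\textbf{Main obstacle.} The only real difficulty is the positivity in Step 2. Steps 1, 3 and 4 are elementary, but Step 2 is exactly the former BMV conjecture. I would rely on Stahl's Riemann-surface argument for it rather than hope for an elementary route.
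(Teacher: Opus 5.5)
Your plan is the same as the paper's, which gives no argument and cites Stahl's theorem in Er\"emenko's formulation. That formulation already includes the support interval, validity for every real $t$, and uniqueness. Your Steps 1 and 4 (translation, finiteness, uniqueness via the Fourier transform) are correct.

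There is one genuine hole, where Steps 2 and 3 meet. Apply Bernstein's theorem to the completely monotone function $s \mapsto \trace \econst^{\mtx{A} - s\mtx{B}}$. It gives $g(t) = \int \econst^{t\lambda} \idiff{\rho}(\lambda)$ only for $t \leq 0$. It does not say that the integral is finite for $t > 0$, let alone that it equals $g(t)$ there. Step 3 compares growth rates as $t \to +\infty$, so it presupposes exactly that. Your claimed equivalence between the BMV property and a representation valid on all of $\R$ is therefore not yet justified. Nor is it automatic: the positive measure $\econst^{-\lambda}\idiff{\lambda}$ on $[0,\infty)$ has a Laplace transform that is finite for $t < 1$ and infinite for $t \geq 1$.

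The missing input is that $g$ is entire; this is a Landau--Pringsheim argument. Fix $t_0 < 0$. You may differentiate under the integral, by dominated convergence, since $\rho$ is finite and $\lambda^k \econst^{t\lambda}$ is bounded on $[0,\infty)$ for $t$ near $t_0$. This gives $g^{(k)}(t_0) = \int \lambda^k \econst^{t_0 \lambda} \idiff{\rho}(\lambda)$. For $t > t_0$, the Taylor series of the entire function $g$ at $t_0$ converges to $g(t)$. Every term of the expanded series is nonnegative, because $\lambda \geq 0$ and $t - t_0 > 0$, so Tonelli gives $g(t) = \int \econst^{t\lambda} \idiff{\rho}(\lambda)$ for all $t \in \R$. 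With this inserted, Step 3 goes through as written.

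Alternatively, invoke Stahl's theorem in the form the paper cites. That form already supplies a positive measure on $[\lambda_{\min}(\mtx{H}), \lambda_{\max}(\mtx{H})]$ with the representation on all of $\R$, and then your Steps 1 and 3 are unnecessary.
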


\noindent
This formulation of Stahl's theorem %
appears in Er{\"e}menko's account of Stahl's proof~\cite{Ere15:Herbert-Stahls}.
Clivaz's paper~\cite{Cli16:Stahls-Theorem} contains another exposition of Stahl's argument.
Hein{\"a}vaara's article~\cite{Hei25:Tracial-Joint} develops a creative new approach to the result that yields many other dividends.

The Laplace transform representation of the trace exponential
furnishes detailed information about its derivatives.

\begin{proposition}[Trace exponential: Differential properties] \label{prop:trexp-diff}
Fix two self-adjoint matrices $\mtx{A}, \mtx{H}$ with the same dimension,
and define the function $f(t) \coloneqq \trace \econst^{\mtx{A} + t \mtx{H}}$ for $t \in \R$.
The derivatives with respect to the scalar variable $t$ enjoy the representation
\begin{equation} \label{eqn:trexp-Dk}
f^{(k)}(t) = %
\int_{\lambda_{\min}(\mtx{H})}^{\lambda_{\max}(\mtx{H})} \lambda^{k} \econst^{t \lambda} \idiff{\nu}(\lambda)
\quad\text{for $t \in \R$ and $k \in \Z_+$.}
\end{equation}
In particular,
\begin{enumerate}
\item	\label{item:trexp-even-pos} %
The even-order derivatives are positive:
$f^{(2k)}(t) \geq 0$ for all $t \in \R$.
Therefore, %
the odd-order derivatives $f^{(2k+1)}$ are increasing.

\item	\label{item:trexp-even-bounds} %
The even-order derivatives admit the comparisons
\[
\econst^{t \lambda_{\min}(\mtx{H})} f^{(2k)}(0)
	\leq f^{(2k)}(t) \leq
	\econst^{t \lambda_{\max}(\mtx{H})} f^{(2k)}(0)
	\quad\text{for all $t \geq 0$.} %
\]
\end{enumerate}
\end{proposition}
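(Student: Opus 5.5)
The plan is to differentiate the Laplace transform representation in Fact~\ref{fact:stahl} under the integral sign, and then read off both claims from the sign and support properties of the measure $\nu = \nu_{\mtx{A},\mtx{H}}$.

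\textbf{Step 1: the derivative formula.} Since $\nu$ is a finite positive measure supported on the compact interval $[a,b] \coloneqq [\lambda_{\min}(\mtx{H}), \lambda_{\max}(\mtx{H})]$, the integrand $\lambda \mapsto \lambda^k \econst^{t\lambda}$ is bounded on $[a,b]$, uniformly for $t$ in compact sets. Dominated convergence (or standard differentiation of Laplace transforms of compactly supported measures) therefore justifies repeated differentiation under the integral. Each derivative in $t$ contributes one factor of $\lambda$, which yields~\eqref{eqn:trexp-Dk} for every $k \in \Z_+$. An alternative is to note that $f$ extends to an entire function given by the same integral and to differentiate that extension. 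This is the only step needing care, and the compact support makes it routine.

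\textbf{Step 2: item~\ref{item:trexp-even-pos}.} For an even order $2k$, the integrand $\lambda^{2k}\econst^{t\lambda}$ is nonnegative and $\nu$ is positive, so $f^{(2k)}(t) \geq 0$. The next odd derivative satisfies $(f^{(2k+1)})' = f^{(2k+2)} \geq 0$, so $f^{(2k+1)}$ is increasing.

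\textbf{Step 3: item~\ref{item:trexp-even-bounds}.} Fix $t \geq 0$. For every $\lambda \in [a,b]$, monotonicity of the exponential gives
\[
\econst^{ta} \leq \econst^{t\lambda} \leq \econst^{tb}.
\]
Multiply by the nonnegative weight $\lambda^{2k}$ and integrate against $\nu$. This sandwiches $f^{(2k)}(t)$ between $\econst^{ta}\int \lambda^{2k}\idiff\nu$ and $\econst^{tb}\int \lambda^{2k}\idiff\nu$. By~\eqref{eqn:trexp-Dk} at $t=0$, the common integral equals $f^{(2k)}(0)$, which completes the proof.

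The main obstacle is really just the justification in Step 1. All the depth lies in Stahl's theorem, which is assumed as Fact~\ref{fact:stahl}.
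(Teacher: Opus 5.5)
Your proposal is correct and follows the paper's proof essentially verbatim: differentiate Stahl's Laplace-transform representation under the integral sign (the paper cites the bounded convergence theorem, matching your compact-support dominated-convergence justification), then read off nonnegativity of even derivatives and the two-sided bound for $t \geq 0$ from $\econst^{t\lambda_{\min}(\mtx{H})} \leq \econst^{t\lambda} \leq \econst^{t\lambda_{\max}(\mtx{H})}$ on the support of $\nu$. Nothing is missing.
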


\begin{proof}
The formula~\eqref{eqn:trexp-Dk} follows from Fact~\ref{fact:stahl} and the
bounded convergence theorem.  The other consequences ensue immediately.
When $t \geq 0$, we have the upper bound
\[
f^{(2k)}(t) = \int_{\lambda_{\min}(\mtx{H})}^{\lambda_{\max}(\mtx{H})} \lambda^{2k} \econst^{t \lambda} \idiff{\nu}(\lambda)
	\leq \int_{\lambda_{\min}(\mtx{H})}^{\lambda_{\max}(\mtx{H})} \lambda^{2k} \econst^{t \lambda_{\max}(\mtx{H})} \idiff{\nu}(\lambda)
	= \econst^{t \lambda_{\max}(\mtx{H})}
	f^{(2k)}(0).
\]
The lower bound follows from the parallel argument.
\end{proof}

\subsection{Background: Derivatives of smooth functions}

We also require some facts about the derivatives
of smooth functions defined on matrices.  %
Although we are only concerned with the trace exponential,
it is more transparent to frame the results in a general setting.

Let $F : \Sym_d(\F) \to \R$ be a smooth, real-valued function on self-adjoint matrices,
say $K$-times Fr{\'e}chet differentiable.
As a particular example, one may consider a scalar function $\psi : \R \to \R$,
and extend it to a trace function $F : \Sym_d(\F) \to \R$
via the rule $F(\mtx{H}) \coloneqq \trace \psi(\mtx{H})$,
where $\psi(\mtx{H})$ is defined via functional calculus.
If the scalar function $\psi$ is smooth, %
then the trace function $F$ is also smooth.
For more background, see~\cite[Sec.~X.4]{Bha97:Matrix-Analysis}
or~\cite[Ch.~3]{Hig08:Functions-Matrices}.

Fix a self-adjoint matrix $\mtx{A} \in \Sym_d(\F)$.
For another self-adjoint matrix $\mtx{H} \in \Sym_d(\F)$,
define the restriction of the function $F$ to the line $t \mapsto \mtx{A} + t\mtx{H}$:
\begin{equation} \label{eqn:line-fn}
f : \R \times \Sym_d(\F) \to \R
\quad\text{where}\quad
f(t; \mtx{H}) \coloneqq F(\mtx{A} + t \mtx{H}).
\end{equation}
We can differentiate $t \mapsto f(t; \mtx{H})$ repeatedly with respect
to the scalar variable $t$.  For $k \leq K$,
the function $F$ has a Fr{\'e}chet derivative $\Diff^k F(\mtx{A})$ of order $k$,
which is a $k$-multilinear function on $(\Sym_d(\F))^k$.
Therefore, the $k$th derivative of the restriction $f$
at $t = 0$ takes the form
\begin{equation} \label{eqn:kth-gateaux}
f^{(k)}(0; \mtx{H})
	= \Diff^k F(\mtx{A})[\mtx{H}, \dots, \mtx{H}]
	= \ell_{\mtx{A}}( \otimes^{k} \, \mtx{H} ),
\end{equation}
where $\ell_{\mtx{A}} : (\otimes^k \, \Sym_d(\F)) \to \R$ is a linear functional that depends
only on the fixed matrix $\mtx{A}$ and on the function $F$.
The operator $\otimes^k$ denotes the $k$-fold tensor product,
which may be realized concretely using the Kronecker product.
In particular, we have the homogeneity property
\begin{equation} \label{eqn:kth-homogeneous}
f^{(k)}(0; \alpha \mtx{H}) = \alpha^k f^{(k)}(0; \mtx{H})
	\quad\text{for all $\alpha \in \R$.}
\end{equation}
Let us emphasize that the last two displayed formulae hold only at $t = 0$.

We are interested in the properties of derivatives at $t = 0$
when the direction $\mtx{H}$ is chosen at random.
Here and elsewhere, we assume that all expectations are finite.

\begin{proposition}[Derivatives: Statistical properties] \label{prop:diff-stat}
Consider the function $f$ defined in~\eqref{eqn:line-fn}.
Let $\mtx{W}, \mtx{X} \in \Sym_d(\F)$ be random self-adjoint matrices
that are statistically independent from $\mtx{A}$ and that
have $K$ finite moments.
For each natural number $k \leq K$,
\begin{equation} \label{eqn:moment-match}
\Expect[ \otimes^k\, \mtx{W} ] = \Expect[ \otimes^k \,\mtx{X} ]
\quad\text{implies}\quad
\Expect f^{(k)}(0; \mtx{W}) = \Expect f^{(k)}(0; \mtx{X}).
\end{equation}
In particular,
\begin{align}
\Expect \mtx{W}  = \mtx{0}
&\quad\text{implies}\quad
\Expect  f'(0; \mtx{W})   = 0;  \label{eqn:w-center} \\
\mtx{W} \sim - \mtx{W}
&\quad\text{implies}\quad
\Expect  f^{(2k+1)}(0; \mtx{W})  = 0
\quad\text{when $2k+1 \leq K$.} \label{eqn:w-sym}
\end{align}
In other words, if $\mtx{W}$ is centered, then the first derivative of $f$ at zero is centered.
If $\mtx{W}$ has a symmetric distribution, the each odd-order derivative of $f$ at zero
is centered.
\end{proposition}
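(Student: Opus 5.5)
The plan is to reduce everything to the multilinear representation~\eqref{eqn:kth-gateaux} and then use independence. Since $\mtx{A}$ is independent of $\mtx{W}$, I will condition on $\mtx{A}$, or equivalently work under the product measure via Fubini. For fixed $\mtx{A}$, formula~\eqref{eqn:kth-gateaux} writes $f^{(k)}(0;\mtx{H}) = \ell_{\mtx{A}}(\otimes^k \mtx{H})$, where $\ell_{\mtx{A}}$ is a linear functional on the finite-dimensional space $\otimes^k \Sym_d(\F)$. A linear functional on a finite-dimensional space commutes with expectation whenever the tensor is integrable. The $K$ finite moments guarantee $\Expect \norm{\otimes^k \mtx{W}} \le \Expect\norm{\mtx{W}}^k < \infty$ for $k \le K$. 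Therefore
\[
\Expect[ f^{(k)}(0;\mtx{W}) \condbar \mtx{A} ] = \ell_{\mtx{A}}\big( \Expect[\otimes^k \mtx{W}] \big),
\]
and the same identity holds with $\mtx{X}$ in place of $\mtx{W}$.

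Under the hypothesis $\Expect[\otimes^k \mtx{W}] = \Expect[\otimes^k \mtx{X}]$, the two conditional expectations coincide for every value of $\mtx{A}$. Taking the outer expectation over $\mtx{A}$ gives~\eqref{eqn:moment-match}. The paper's standing convention is that all expectations are finite, and this justifies integrating out $\mtx{A}$ and applying the tower property.

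The two special cases are instances of the same identity. For~\eqref{eqn:w-center}, take $k=1$ and compare $\mtx{W}$ with the deterministic matrix $\mtx{X} = \mtx{0}$. Then $\Expect \mtx{W} = \mtx{0} = \Expect \mtx{X}$, and $f'(0;\mtx{0}) = 0$ by linearity, so $\Expect f'(0;\mtx{W}) = 0$. For~\eqref{eqn:w-sym}, symmetry $\mtx{W}\sim-\mtx{W}$ gives $\Expect[\otimes^{2k+1}\mtx{W}] = \Expect[\otimes^{2k+1}(-\mtx{W})] = -\Expect[\otimes^{2k+1}\mtx{W}]$, so the odd tensor moment vanishes. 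Alternatively, homogeneity~\eqref{eqn:kth-homogeneous} with $\alpha=-1$ gives $\Expect f^{(2k+1)}(0;\mtx{W}) = \Expect f^{(2k+1)}(0;-\mtx{W}) = -\Expect f^{(2k+1)}(0;\mtx{W})$, where the first equality uses the independence of $\mtx{A}$ and $\mtx{W}$ so that the joint law of $(\mtx{A}, \mtx{W})$ equals that of $(\mtx{A}, -\mtx{W})$.

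The only delicate point is the measurability and integrability needed to exchange $\ell_{\mtx{A}}$ with the expectation over $\mtx{W}$ while $\mtx{A}$ is random. I would handle it by noting that $\ell_{\mtx{A}}$ depends continuously on $\mtx{A}$, since $F$ is $C^K$. The function $(\mtx{A},\mtx{H}) \mapsto f^{(k)}(0;\mtx{H})$ is then jointly measurable, and Fubini applies under the assumed finiteness of the expectations.
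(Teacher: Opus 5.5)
Your proposal is correct and follows the paper's argument: both pass the expectation through the linear functional $\ell_{\mtx{A}}$ from~\eqref{eqn:kth-gateaux}, and both get the special cases by comparing with $\mtx{X} = \mtx{0}$. You also make explicit the conditioning on $\mtx{A}$ and the Fubini step that the paper leaves implicit, and your $\mtx{W} \sim -\mtx{W}$ computation matches the paper's use of $\mtx{W} \sim \eps \mtx{W}$ with an independent Rademacher sign $\eps$ to show the odd tensor moments vanish.
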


\begin{proof}
Under the moment matching condition~\eqref{eqn:moment-match},
the representation~\eqref{eqn:kth-gateaux} ensures that
\[
\Expect f^{(k)}(0; \mtx{W}) 
	= \ell_{\mtx{A}}\big( \Expect[ \otimes^k\, \mtx{W} ] \big)
	= \ell_{\mtx{A}}\big( \Expect[ \otimes^k\, \mtx{X} ] \big)
	= \Expect f^{(k)}(0; \mtx{X}) .
\]
The remaining two points follow when we take $\mtx{X} = \mtx{0}$.
For example, when the distribution of $\mtx{W}$ is symmetric,
then $\mtx{W} \sim \eps \mtx{W}$.
The Rademacher variable $\eps \sim \uniform\{\pm 1\}$ is %
independent from $\mtx{W}$.
Thus,
\[
\Expect[ \otimes^{2k+1} \, \mtx{W} ]
	= \Expect[ \eps^{2k+1} \cdot (\otimes^{2k+1} \,\mtx{W}) ]
	= \Expect[ \eps^{2k+1} ] \cdot \Expect[\otimes^{2k+1} \, \mtx{W} ]
	= \mtx{0}.
\]
This observation completes the argument.
\end{proof}

\subsection{Step 1: Comparison for the trace mgf of one matrix}

The main technical challenge is to show that the trace mgf increases
when we replace a single summand %
with a \hilite{scaled} copy of the matching Gaussian summand. %
The first result isolates this computation.

\begin{proposition}[Comparison: Trace mgf of one matrix] \label{prop:compare-one}
Fix a deterministic self-adjoint matrix $\mtx{A}$.
Consider a random self-adjoint matrix $\mtx{W}$ with two finite moments %
that satisfies $\Expect[ \mtx{W} ]  = \mtx{0}$ and $\lambda_{\max}(\mtx{W}) \leq R$.
Construct a Gaussian random matrix $\mtx{X} \sim \normal(\mtx{0}, \Varo[\mtx{W}])$
with the same first- and second-order moments as $\mtx{W}$.
Then
\begin{equation} \label{eqn:wx-comp}
\Expect \trace \econst^{\mtx{A} + \mtx{W}}
	\leq \Expect \trace \econst^{\mtx{A} + g \mtx{X}}
\quad\text{where}\quad
g \coloneqq g_R \coloneqq \left[ \frac{\econst^{R} - R - 1}{R^2/2} \right]^{1/2}.
\end{equation}
\end{proposition}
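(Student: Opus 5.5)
The plan is a second-order Taylor comparison along the line $t \mapsto \mtx{A} + t\mtx{H}$. Stahl's theorem, through Proposition~\ref{prop:trexp-diff}, supplies an \emph{upper} bound on the curvature of the trace exponential on the $\mtx{W}$ side and a \emph{lower} bound on the Gaussian side. Moment matching then links the two second derivatives at $t=0$.

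\emph{Step 1 ($\mtx{W}$ side).} We may assume $R \geq 0$. Indeed, $\Expect \mtx{W} = \mtx{0}$ forces $\Expect \trace \mtx{W} = 0$, so $\lambda_{\max}(\mtx{W}) \geq 0$ with positive probability. Fix a realization of $\mtx{W}$ and set $f(t) \coloneqq f(t;\mtx{W}) = \trace \econst^{\mtx{A} + t\mtx{W}}$. Taylor's formula with integral remainder gives
\[
f(1) = f(0) + f'(0) + \int_0^1 (1-t) f''(t) \idiff{t}.
\]
By Proposition~\ref{prop:trexp-diff}(\ref{item:trexp-even-bounds}) with $k=1$, we have $f''(t) \leq \econst^{t\lambda_{\max}(\mtx{W})} f''(0) \leq \econst^{tR} f''(0)$ for $t \in [0,1]$, using $f''(0) \geq 0$. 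Since $\int_0^1 (1-t)\econst^{tR}\idiff{t} = (\econst^R - R - 1)/R^2 = g^2/2$, it follows that
\[
\trace \econst^{\mtx{A}+\mtx{W}} \leq \trace \econst^{\mtx{A}} + f'(0;\mtx{W}) + \tfrac{1}{2} g^2 f''(0;\mtx{W}).
\]
Take expectations. Proposition~\ref{prop:diff-stat}, via \eqref{eqn:w-center}, kills the linear term. Fact~\ref{fact:moment-tensor} gives $\Expect[\mtx{W}\otimes\mtx{W}] = \Expect[\mtx{X}\otimes\mtx{X}]$ because the means and variance functions agree, so \eqref{eqn:moment-match} yields $\Expect f''(0;\mtx{W}) = \Expect f''(0;\mtx{X})$. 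Finally, homogeneity \eqref{eqn:kth-homogeneous} converts $g^2 f''(0;\mtx{X})$ into $f''(0; g\mtx{X})$. The conclusion of this step is
\[
\Expect \trace\econst^{\mtx{A}+\mtx{W}} \leq \trace\econst^{\mtx{A}} + \tfrac12 \Expect f''(0; g\mtx{X}).
\]

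\emph{Step 2 (Gaussian side).} It remains to show $\Expect \trace \econst^{\mtx{A} + g\mtx{X}} \geq \trace\econst^{\mtx{A}} + \tfrac12 \Expect f''(0;g\mtx{X})$. The distribution of $\mtx{X}$ is symmetric, so we can symmetrize. For fixed $\mtx{H} = g\mtx{X}$, set $\psi(t) \coloneqq f(t;\mtx{H}) + f(-t;\mtx{H})$, an even function. Then $\psi'''$ is odd with $\psi'''(0) = 0$, and $\psi'''$ is increasing because $\psi'''' \geq 0$ by Proposition~\ref{prop:trexp-diff}(\ref{item:trexp-even-pos}). Hence $\psi''' \geq 0$ on $[0,\infty)$, so $\psi''(t) \geq \psi''(0) = 2f''(0;\mtx{H})$ for $t \geq 0$. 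Also $\psi'(0) = 0$. Integrating twice gives
\[
\psi(1) \geq \psi(0) + \tfrac12 \psi''(0) = 2\trace\econst^{\mtx{A}} + f''(0;\mtx{H}).
\]
Since $g\mtx{X} \sim -g\mtx{X}$, we have $\Expect f(1;g\mtx{X}) = \tfrac12 \Expect \psi(1)$, which delivers the desired lower bound. Combining Steps 1 and 2 proves \eqref{eqn:wx-comp}.

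\emph{Main obstacle: integrability.} Stahl's theorem does all the structural work, so I expect the main nuisance to be justifying the expectations. The quantities $f'(0;\mtx{W})$ and $f''(0;\mtx{W})$ are bounded by $\mathrm{Const}\cdot\trace\econst^{\mtx{A}}\,(\norm{\mtx{W}} + \norm{\mtx{W}}^2)$, which is integrable under the two-moment assumption. The Gaussian trace mgf is finite because Gaussian matrices have exponential moments of all orders. So Fubini and the moment-matching identity apply. The degenerate case $R=0$ is handled by continuity, with $g_0 = 1$.
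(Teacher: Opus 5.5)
Your proof is correct and follows the paper's argument. On the $\mtx{W}$ side you use a first-order Taylor expansion with the Stahl-based bound $f''(t) \leq \econst^{tR} f''(0)$, on the Gaussian side you lower-bound using the monotonicity of the third derivative, and you match the second derivatives at zero; your only deviation is cosmetic, symmetrizing pathwise via $\psi(t) = f(t;\mtx{H}) + f(-t;\mtx{H})$ where the paper uses a second-order Taylor expansion with $\Expect f'''(0; g\mtx{X}) = 0$ by symmetry of $\mtx{X}$.
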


As usual in Lindeberg's method, we exploit the smoothness
of the trace exponential by means of a Taylor expansion.
Stahl's theorem (Fact~\ref{fact:stahl}) provides fine control on terms in these Taylor series.

\begin{proof}
For a random self-adjoint matrix $\mtx{H}$, introduce the function
\[
f(t; \mtx{H}) \coloneqq \Expect \trace \econst^{\mtx{A} + t \mtx{H}}
\quad\text{for $t \in \R$.}
\]
We have incorporated the expectation into the function
$f$ to lighten notation.  The distribution of the random matrix
$\mtx{H}$ will always have enough regularity that we can
exchange integrals and derivatives with the expectation;
we proceed without further comment on this matter.

Let us begin with the left-hand side of the desired inequality~\eqref{eqn:wx-comp}.
The first-order Taylor expansion with integral remainder yields
\begin{align*}
\onecirc &\coloneqq \Expect \trace \econst^{\mtx{A} + \mtx{W}}
	= f(1; \mtx{W})
	= f(0; \mtx{W}) + f'(0; \mtx{W}) + \int_{0}^1 f''(t; \mtx{W}) (1-t) \idiff{t}.
\intertext{The constant term $f(0; \mtx{W}) = \trace \econst^{\mtx{A}}$.
Equation~\eqref{eqn:w-center} of Proposition~\ref{prop:diff-stat} 
ensures that the first derivative $f'(0; \mtx{W}) = 0$ because $\mtx{W}$ is centered.
Meanwhile, since $\lambda_{\max}(\mtx{W}) \leq R$,
we can employ Proposition~\ref{prop:trexp-diff} (\ref{item:trexp-even-bounds})
to bound the second derivative:}
\onecirc
	&\leq \trace \econst^{\mtx{A}} + f''(0; \mtx{W}) \int_0^1 \econst^{Rt} (1-t) \idiff{t}
	= \trace \econst^{\mtx{A}} + \frac{1}{2} g^2 f''(0; \mtx{W}).
\end{align*}
In the last step, the integral equals $(\econst^{R} - R - 1) / R^2 \eqqcolon g^2 / 2$.

Next, we turn to the right-hand side of our object~\eqref{eqn:wx-comp}.
The second-order Taylor expansion with integral remainder produces
the expression
\begin{align*}
\twocirc &\coloneqq \Expect \trace \econst^{\mtx{A} + g \mtx{X}}
	= f(1; g\mtx{X}) \\
	&= f(0; g\mtx{X}) + f'(0; g\mtx{X}) + \frac{1}{2} f''(0; g \mtx{X})
	+ \frac{1}{2} \int_0^1 f'''(t; g\mtx{X}) (1-t)^2 \idiff{t}.
\intertext{%
The constant term $f(0; g\mtx{X}) = \trace \econst^{\mtx{A}}$,
and the first-order term $f'(0; g\mtx{X}) = 0$
because the random matrix $\mtx{X}$ is centered.
In the second derivative term, formula~\eqref{eqn:kth-homogeneous} allows us
to draw out the scalar factor: $f''(0; g\mtx{X}) = g^2 f''(0; \mtx{X})$.
As for the third derivative,
Proposition~\ref{prop:trexp-diff} (\ref{item:trexp-even-pos}) states that $t \mapsto f'''(t; g\mtx{X})$
is increasing, so we obtain a \hilite{lower} bound by reducing
$t$ to zero:}
\twocirc &\geq \trace \econst^{\mtx{A}}
	+ \frac{1}{2} g^2 f''(0;\mtx{X})
	+ \frac{1}{2} \int_0^1 f'''(0; g \mtx{X}) (1-t)^2 \idiff{t} \\
	&= \trace \econst^{\mtx{A}} + \frac{1}{2} g^2 f''(0; \mtx{X}).
\end{align*}
In the last step, we invoked equation~\eqref{eqn:w-sym} of Proposition~\ref{prop:diff-stat} 
to determine that $f'''(0; g \mtx{X}) = 0$ because the centered Gaussian random
matrix $\mtx{X}$ has a symmetric distribution.

Altogether,
\[
\Expect \trace \econst^{\mtx{A} + \mtx{W}}
	- \Expect \trace \econst^{\mtx{A} + g\mtx{X}}
	= \onecirc - \twocirc
	\leq \frac{1}{2} g^2 \big[ f''(0; \mtx{W}) - f''(0; \mtx{X}) \big]
	= 0.
\]
Indeed, by construction and by Fact~\ref{fact:moment-tensor},
the two random self-adjoint matrices share the same
second-order statistics: $\Expect[\mtx{W} \otimes \mtx{W}] = \Expect[\mtx{X} \otimes \mtx{X}]$.
Therefore, formula~\eqref{eqn:moment-match} in Proposition~\ref{prop:diff-stat} ensures that
$f''(0; \mtx{W}) = f''(0; \mtx{X})$.  This observation completes the argument.
\end{proof}

\begin{remark}[Bennett's inequality]
This argument is inspired by Bennett's inequality~\cite[Sec.~2.7]{BLM13:Concentration-Inequalities},
which depends on the fact that the function
\[
h(t) \coloneqq \frac{\econst^{t} - t - 1}{t^2/2}
\quad\text{is increasing for $t \in \R$.}
\]
In particular, $h(\lambda t) \leq h(Rt)$ when $t \geq 0$ and $\lambda \leq R$.
We reach the same upper bound for $\onecirc$ by introducing
this inequality into the Laplace transform representation (Fact~\ref{fact:stahl})
of $\trace \econst^{\mtx{A} + \mtx{W}}$.
\end{remark}

\subsection{Step 2: Comparison of trace mgfs for a sum}

To compare the trace mgfs of two sums of independent random matrices,
we can exchange one summand at a time.  This is the essence of Lindeberg's strategy.

\begin{proposition}[Comparison: Trace mgf of a sum] \label{prop:exchange}
As in Theorem~\ref{thm:maxeig-compare},
consider the independent sum $\mtx{Y}$ %
and the Gaussian proxy $\mtx{Z}$. %
Choose a deterministic self-adjoint matrix $\mtx{\Delta}$.
For a parameter $\theta \geq 0$,
\[
\Expect \trace \econst^{\theta ( \mtx{Y} + \mtx{\Delta})}
	\leq \Expect \trace \econst^{g_R(\theta) \mtx{Z} + \theta \mtx{\Delta}}
	\quad\text{where}\quad
	g_R(\theta) \coloneqq \left[ \frac{\econst^{\theta R} - \theta R - 1}{R^2/2} \right]^{1/2}.
\]
Recall that $\lambda_{\max}(\mtx{W}_i) \leq R$ for each summand in $\mtx{Y}$.
\end{proposition}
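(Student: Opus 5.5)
This is a Lindeberg exchange: we replace the summands one at a time, using Proposition~\ref{prop:compare-one} at each step with a suitably scaled input. For $\theta = 0$ both sides equal $d$, so assume $\theta > 0$. Since $\lambda_{\max}(\theta \mtx{W}_i) \leq \theta R$, the matrix $\theta\mtx{W}_i$ is centered, has upper bound $\theta R$, and has Gaussian match $\theta \mtx{X}_i \sim \normal(\mtx{0}, \Varo[\theta \mtx{W}_i])$. The scalar from Proposition~\ref{prop:compare-one} with bound $\theta R$ is
\[
g_{\theta R} = \left[ \frac{\econst^{\theta R} - \theta R - 1}{(\theta R)^2/2} \right]^{1/2} = \frac{g_R(\theta)}{\theta}.
\]
Hence the Gaussian replacement of $\theta \mtx{W}_i$ is $g_{\theta R}\, \theta \mtx{X}_i = g_R(\theta) \mtx{X}_i$. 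This is exactly the scaling in the claim.

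Define the hybrid matrices, for $j = 0, 1, \dots, n$,
\[
\mtx{Y}_j \coloneqq \theta \mtx{\Delta} + \sum_{i \leq j} g_R(\theta) \mtx{X}_i + \sum_{i > j} \theta \mtx{W}_i,
\]
with all $\mtx{W}_i, \mtx{X}_i$ independent. Then $\mtx{Y}_0 = \theta(\mtx{Y} + \mtx{\Delta})$ and $\mtx{Y}_n = g_R(\theta) \mtx{Z} + \theta \mtx{\Delta}$. It suffices to show $\Expect \trace \econst^{\mtx{Y}_{j-1}} \leq \Expect \trace \econst^{\mtx{Y}_j}$ for each $j$ and then chain the inequalities.

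For the step from $\mtx{Y}_{j-1}$ to $\mtx{Y}_j$, let $\mtx{A}$ be the sum of all terms except the $j$th. This $\mtx{A}$ is independent of $(\mtx{W}_j, \mtx{X}_j)$. Condition on $\mtx{A}$ and apply Proposition~\ref{prop:compare-one} with $\theta \mtx{W}_j$ in place of $\mtx{W}$ and bound $\theta R$:
\[
\Expect\big[ \trace \econst^{\mtx{A} + \theta \mtx{W}_j} \,\big|\, \mtx{A} \big] \leq \Expect\big[ \trace \econst^{\mtx{A} + g_R(\theta) \mtx{X}_j} \,\big|\, \mtx{A} \big].
\]
Taking the expectation over $\mtx{A}$ (tower property and Fubini) gives the step inequality.

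The main technical point is integrability. Proposition~\ref{prop:compare-one} is stated for deterministic $\mtx{A}$, and the Taylor and derivative manipulations need finite expectations. The Gaussian pieces have finite exponential moments of every order. The $\theta \mtx{W}_i$ pieces satisfy $\trace \econst^{\mtx{A} + \theta\mtx{W}_i} \leq \trace \econst^{\mtx{A} + \theta R \Id}$, since the trace exponential is monotone in the Loewner order. So every quantity is bounded by an integrable function of the Gaussian parts, which justifies conditioning and interchanging expectations. If an expectation were infinite, the inequality would hold trivially or could be handled by monotone convergence, so no real obstruction arises.
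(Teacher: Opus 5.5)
Your proof is correct and follows the paper's argument: a Lindeberg telescoping exchange that applies Proposition~\ref{prop:compare-one} at each step, conditionally on the remaining terms, which are independent of $(\mtx{W}_j, \mtx{X}_j)$. The paper only says ``by a scaling argument, take $\theta = 1$''; you carry out that reduction explicitly through the identity $\theta\, g_{\theta R} = g_R(\theta)$, and you add an integrability justification based on $\mtx{A} + \theta \mtx{W}_i \psdle \mtx{A} + \theta R\, \Id$ that the paper leaves implicit.
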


\begin{proof}
Without loss, assume that the family $(\mtx{W}_1, \dots, \mtx{W}_n)$
is statistically independent from the family $(\mtx{X}_1, \dots, \mtx{X}_n)$.
By a scaling argument, we restrict our attention to $\theta = 1$.

Define matrices
\[
\mtx{A}_k \coloneqq \mtx{\Delta} + \sum_{i=1}^{k-1} \mtx{W}_i + \sum_{i=k+1}^n g_R(1) \mtx{X}_i  
\quad\text{for $k = 1, 2, 3, \dots, n$.}
\]
These matrices interpolate between the two sums $\mtx{Y}$ and $\mtx{Z}$ in the sense that
\[
\mtx{A}_n + \mtx{W}_n = \mtx{Y} + \mtx{\Delta } \quad\text{and}\quad
\mtx{A}_{1} + g_R(1) \mtx{X}_1 = g_R(1) \mtx{Z} + \mtx{\Delta}.
\]
At intermediate steps in the process, we have the identity
\[
\mtx{A}_{k} + \mtx{W}_{k} = \mtx{A}_{k+1} + g_R(1) \mtx{X}_{k+1}
\quad\text{for $k = 1, 2, 3, \dots, n-1$.}
\]
In addition, $\mtx{A}_k$ is statistically independent from $(\mtx{W}_k, \mtx{X}_k)$.
Now, we can write the difference between the trace exponentials as a telescoping sum:
\[
\Expect\trace \econst^{\mtx{Y} + \mtx{\Delta}} - \Expect \trace \econst^{g_R(1) \mtx{Z} + \mtx{\Delta}}
	= \sum_{k=1}^n \left[ \Expect \trace \econst^{\mtx{A}_k + \mtx{W}_k}
		- \Expect \trace \econst^{\mtx{A}_k + g_R(1) \mtx{X}_k} \right]
	\leq 0.
\]
Indeed, for each $k$, the bracket is bounded above by zero.
This point follows from Proposition~\ref{prop:compare-one}, applied with conditioning
on $\mtx{A}_k$, averaging over the randomness in $(\mtx{W}_k, \mtx{X}_k)$.
\end{proof}

\subsection{Step 3: Gaussian concentration}

We have compared the trace mgf of the original sum
with the trace mgf of a Gaussian matrix.
The next step uses Gaussian concentration to obtain
bounds that depend more explicitly on the statistics
of the Gaussian random matrix.

\begin{proposition}[Gaussian concentration] \label{prop:gauss-bound}
As in Theorem~\ref{thm:maxeig-compare}, 
consider the independent sum $\mtx{Y}$ %
and the Gaussian proxy $\mtx{Z}$. %
Choose a deterministic self-adjoint matrix $\mtx{\Delta}$.
For each parameter $\theta \geq 0$,
the trace mgf of the shifted sum %
satisfies the bound
\[
\log \Expect \trace \econst^{\theta (\mtx{Y} + \mtx{\Delta})}
	\leq \log d + \Expect \lambda_{\max}(g_R(\theta) \mtx{Z} + \theta \mtx{\Delta})
	+ \tfrac{1}{2} g_R(\theta)^2 \sigma_*^2(\mtx{Z}).
\]
The function $g_R$ is defined in Proposition~\ref{prop:exchange},
and  $\sigma_*^2(\mtx{Z})$ denotes the weak variance~\eqref{eqn:weak-var}.
\end{proposition}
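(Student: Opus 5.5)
Starting from Proposition~\ref{prop:exchange}, we have for each $\theta \geq 0$
\[
\Expect \trace \econst^{\theta(\mtx{Y} + \mtx{\Delta})}
	\leq \Expect \trace \econst^{g_R(\theta)\mtx{Z} + \theta\mtx{\Delta}}.
\]
So it suffices to bound the trace mgf of the Gaussian matrix $\mtx{G} \coloneqq g_R(\theta)\mtx{Z} + \theta\mtx{\Delta}$. The first step is the elementary estimate $\trace \econst^{\mtx{G}} \leq d \cdot \econst^{\lambda_{\max}(\mtx{G})}$, which holds because every eigenvalue of $\econst^{\mtx{G}}$ is at most $\econst^{\lambda_{\max}(\mtx{G})}$ and there are $d$ of them. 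Taking expectations gives
\[
\Expect \trace \econst^{\mtx{G}} \leq d \cdot \Expect \econst^{\lambda_{\max}(\mtx{G})}.
\]

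Next, we center and apply Gaussian concentration. Write
\[
\Expect \econst^{\lambda_{\max}(\mtx{G})} = \econst^{\Expect \lambda_{\max}(\mtx{G})} \cdot \Expect \econst^{\lambda_{\max}(\mtx{G}) - \Expect\lambda_{\max}(\mtx{G})}.
\]
Since $\lambda_{\max}$ is $1$-Lipschitz in the spectral norm, Fact~\ref{fact:gauss-conc} applied to the Gaussian matrix $\mtx{G}$ bounds the second factor by $\econst^{\sigma_*^2(\mtx{G})/2}$. The deterministic shift $\theta\mtx{\Delta}$ does not affect the variance function, and the scalar $g_R(\theta)$ scales it quadratically. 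Hence
\[
\sigma_*^2(\mtx{G}) = g_R(\theta)^2 \sigma_*^2(\mtx{Z})
\]
directly from the definition~\eqref{eqn:weak-var}.

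Combining these displays and taking logarithms yields
\[
\log \Expect \trace \econst^{\theta(\mtx{Y}+\mtx{\Delta})}
	\leq \log d + \Expect \lambda_{\max}(g_R(\theta)\mtx{Z} + \theta\mtx{\Delta}) + \tfrac12 g_R(\theta)^2 \sigma_*^2(\mtx{Z}),
\]
which is the claim. There is no real obstacle. The only points to check are that $g_R(\theta)$ is a real nonnegative scalar, which holds since $\econst^{x} - x - 1 \geq 0$, and that Fact~\ref{fact:gauss-conc} applies to a Gaussian matrix with nonzero mean. The latter is fine because the fact is stated for an arbitrary Gaussian self-adjoint matrix and involves only its weak variance. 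If $\theta = 0$ the bound is trivial, since both sides reduce to $\log d$.
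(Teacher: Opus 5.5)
Your proof is correct and follows the paper's argument essentially step for step. You invoke Proposition~\ref{prop:exchange}, bound the trace by $d \cdot \econst^{\lambda_{\max}}$, center the exponent, and apply Fact~\ref{fact:gauss-conc}, using that the weak variance ignores the deterministic shift $\theta\mtx{\Delta}$ and is $2$-homogeneous in $g_R(\theta)\mtx{Z}$.
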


\begin{proof}
It is convenient to abbreviate $\mtx{Z}_{\theta} \coloneqq g_R(\theta) \mtx{Z} + \theta \mtx{\Delta}$.
Beginning from Proposition~\ref{prop:exchange},
\[
\Expect \trace \econst^{\theta (\mtx{Y} + \mtx{\Delta})}
	\leq \Expect \trace \econst^{\mtx{Z}_{\theta}}
	\leq d \cdot \Expect \econst^{\lambda_{\max}(\mtx{Z}_{\theta})}.
\]
Add and subtract $\Expect \lambda_{\max}(\mtx{Z}_{\theta})$ in the exponent:
\[
\Expect \trace \econst^{\theta (\mtx{Y} + \mtx{\Delta})}
	\leq d \cdot \econst^{ \Expect \lambda_{\max}(\mtx{Z}_{\theta}) }
	 \cdot \Expect \econst^{ \lambda_{\max}(\mtx{Z}_{\theta})
	 - \Expect\lambda_{\max}(\mtx{Z}_{\theta})}.
\]
A standard application of Gaussian concentration (Fact~\ref{fact:gauss-conc}) %
yields the bound %
\[
\Expect \econst^{\lambda_{\max}(\mtx{Z}_{\theta}) - \Expect \lambda_{\max}(\mtx{Z}_{\theta}) }
	\leq \econst^{\sigma_*^2( \mtx{Z}_{\theta} ) / 2}
	= \econst^{ g_R(\theta)^2 \sigma_*^2(\mtx{Z}) / 2 }.
\]
Indeed, the weak variance statistic $\sigma_*^2(\mtx{Z}_{\theta}) = \sigma_*^2(g_R(\theta) \mtx{Z} + \theta \mtx{\Delta})$
does not depend on the deterministic shift $\theta \mtx{\Delta}$,
and it is a 2-homogeneous function of the centered random matrix
$g_R(\theta) \mtx{Z}$.
Combine the last two displays, and take the logarithm to reach the stated result.
\end{proof}

\subsection{Step 4: Bounds for the maximum eigenvalue}

Given bounds for the trace mgf, we can employ standard methods
from the theory of matrix concentration (Fact~\ref{fact:matrix-lt})
to obtain probability bounds for the maximum eigenvalue.
This result completes the proof of Theorem~\ref{thm:maxeig-compare},
supplementing the previous statement with additional estimates.

\begin{proposition}[Maximum eigenvalue: Probability bounds] \label{prop:maxeig-prob}
As in Theorem~\ref{thm:maxeig-compare},
consider the centered independent sum $\mtx{Y}$ %
and the Gaussian proxy $\mtx{Z}$.
Choose a deterministic self-adjoint matrix $\mtx{\Delta}$, and
introduce the statistics $R$, $\mu$, $\phi$, and $\sigma_*^2$.
The expectation of the maximum eigenvalue satisfies
\begin{align*}
\Expect \lambda_{\max}(\mtx{Y} + \mtx{\Delta}) \leq \mu %
	+ \sqrt{\left( \tfrac{1}{3} R \phi  + \sigma_*^2 \right) \cdot 2 \log d} + \tfrac{1}{3} R \log d.
\end{align*}
At a level $t \geq 0$, the maximum eigenvalue admits a Bennett-type tail bound
\begin{align*}
&\Prob{ \lambda_{\max}(\mtx{Y} + \mtx{\Delta}) \geq \mu %
	+ \big(\phi/3 + \sigma_*^2/R \big) \cdot t }
	\leq d \cdot \left( \frac{\econst^t}{(1+t)^{1+t}} \right)^{\phi/(3R) + \sigma_*^2/R^2}.
\intertext{The analogous Bernstein-type inequality reads}
&\Prob{ \lambda_{\max}(\mtx{Y} + \mtx{\Delta}) \geq \mu +  t} %
	\leq d \cdot \exp \left( \frac{-t^2/2}{R \phi / 3 + \sigma_*^2 + Rt / 3} \right).
\intertext{To control the failure probability at a level $s \geq 0$,}
&\Prob{ \lambda_{\max}(\mtx{Y} + \mtx{\Delta}) \geq \mu %
	+ \sqrt{\left(\tfrac{1}{3} R \phi + \sigma_*^2 \right) \cdot 2s} + \tfrac{1}{3} Rs }
	\leq d \cdot \econst^{-s}.
\end{align*}
\end{proposition}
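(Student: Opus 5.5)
The plan is to combine Proposition~\ref{prop:gauss-bound} with the matrix Laplace transform (Fact~\ref{fact:matrix-lt}). The first task is to bound the Gaussian term $\Expect \lambda_{\max}(g_R(\theta) \mtx{Z} + \theta \mtx{\Delta})$ by a linear function of $\theta$ plus a Bennett-type correction. Write $g \coloneqq g_R(\theta)$. Since $\theta^2 \leq g^2$ (because $\econst^{x} - x - 1 \geq x^2/2$ for $x \geq 0$), we have $g \geq \theta$. Decompose
\[
g\mtx{Z} + \theta\mtx{\Delta} = \theta(\mtx{Z} + \mtx{\Delta}) + (g - \theta)\mtx{Z}.
\]
Subadditivity of $\lambda_{\max}$ then gives
\[
\Expect \lambda_{\max}(g\mtx{Z} + \theta\mtx{\Delta}) \leq \theta\mu + (g-\theta)\phi,
\]
where we use $g - \theta \geq 0$ and $\Expect\lambda_{\max}(\mtx{Z}) = \phi$, with $\mtx{Z}$ centered. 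Next, estimate $g - \theta \leq (g^2 - \theta^2)/(2\theta)$, which holds because $g + \theta \geq 2\theta$. Combining with Proposition~\ref{prop:gauss-bound} yields
\[
\log \Expect \trace \econst^{\theta(\mtx{Y}+\mtx{\Delta})} \leq \log d + \theta\mu + \frac{g_R(\theta)^2 - \theta^2}{2\theta}\,\phi + \tfrac12 g_R(\theta)^2 \sigma_*^2 .
\]

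The second step is to reduce this to a Bennett-form cumulant bound. Write $x \coloneqq \theta R$. Then $\tfrac12 g^2 = (\econst^x - x - 1)/R^2$, and
\[
\frac{g^2-\theta^2}{2\theta} = \frac{\econst^x - 1 - x - x^2/2}{R x}.
\]
The key elementary inequality to check is
\[
\frac{\econst^x - 1 - x - x^2/2}{x} \leq \tfrac13(\econst^x - x - 1) \quad\text{for } x \geq 0.
\]
This follows by comparing Taylor coefficients: on the left the coefficient of $x^k$ is $1/(k+1)!$, on the right it is $1/(3\,k!)$, and $1/(k+1) \leq 1/3$ for $k \geq 2$. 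This gives
\[
\log \Expect \trace \econst^{\theta(\mtx{Y}+\mtx{\Delta} - \mu\Id)} \leq \log d + v\,(\econst^{\theta R} - \theta R - 1)/R^2,
\]
where $v \coloneqq R\phi/3 + \sigma_*^2$. This is precisely the scalar Bennett cumulant bound with variance proxy $v$ and range $R$.

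From here the argument follows the classical scalar route of~\cite[Sec.~2.7]{BLM13:Concentration-Inequalities}. For the tail, optimize $\theta$ in Fact~\ref{fact:matrix-lt}; the choice $\theta = R^{-1}\log(1+t)$ produces the Bennett bound with exponent $v/R^2 = \phi/(3R) + \sigma_*^2/R^2$ at deviation $(v/R)t$. The Bernstein form follows from the standard inequality $(1+t)\log(1+t) - t \geq \tfrac{t^2/2}{1+t/3}$. The $s$-level form comes from inverting the Bernstein bound, equivalently from $\econst^x - x - 1 \leq \tfrac{x^2/2}{1 - x/3}$ together with the usual sub-gamma inversion. For the expectation, use $\Expect\lambda_{\max} \leq \mu + \inf_\theta \theta^{-1}[\log d + v\theta^2/(2(1-\theta R/3))]$, whose optimization gives $\sqrt{2v\log d} + \tfrac13 R\log d$.

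The main obstacle I anticipate is the first step: handling the mismatch between the scaling $g_R(\theta)$ on $\mtx{Z}$ and $\theta$ on $\mtx{\Delta}$ while paying only $\tfrac13 R\phi$ in the variance proxy. The splitting via subadditivity, together with the Taylor-coefficient inequality, is what I would try first. I would double-check it carefully, because it is exactly where the constant $1/3$ must emerge.
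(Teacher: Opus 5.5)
Your proposal is correct and follows essentially the same route as the paper. You split $g_R(\theta)\mtx{Z} + \theta\mtx{\Delta} = \theta(\mtx{Z}+\mtx{\Delta}) + (g_R(\theta)-\theta)\mtx{Z}$ via Weyl's inequality, use $g_R(\theta)-\theta \leq \tfrac{1}{3R}(\econst^{\theta R}-\theta R-1)$ to reach the Bennett cumulant bound with variance proxy $R\phi/3+\sigma_*^2$, and finish with the standard scalar Bennett/Bernstein/sub-gamma calculus. The only addition is that you verify this elementary inequality, which the paper states without proof, via $g-\theta \leq (g^2-\theta^2)/(2\theta)$ and a Taylor-coefficient comparison. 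That step tacitly needs $\phi \geq 0$, which holds by Jensen because $\mtx{Z}$ is centered.
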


\begin{proof}
By a scaling argument, we may take the upper bound $R = 1$.
Proposition~\ref{prop:gauss-bound} provides a bound for the trace mgf:
\[
\log \Expect \trace\econst^{\theta (\mtx{Y} + \mtx{\Delta})}
	\leq \log d + \Expect \lambda_{\max}(g_1(\theta) \mtx{Z} + \theta \mtx{\Delta})
	+ \tfrac{1}{2} g_1(\theta)^2 \sigma_*^2.
\]
To simplify the expectation at the right, first observe that
\[
0 \leq g_1(\theta) - \theta
	= ( 2 (\econst^{\theta} - \theta - 1))^{1/2} - \theta
	\leq \tfrac{1}{3} (\econst^{\theta} - \theta - 1)
	\quad\text{for $\theta \geq 0$.}
\]
Add and subtract $\theta \mtx{Z}$, and invoke Weyl's inequality:
\begin{align*}
\Expect \lambda_{\max}(g_1(\theta) \mtx{Z} + \theta \mtx{\Delta})
	&\leq (g_1(\theta) - \theta) \cdot \Expect \lambda_{\max}(\mtx{Z})
	+ \theta \cdot \Expect \lambda_{\max}(\mtx{Z} + \mtx{\Delta} ) \\
	&\leq %
	(\econst^{\theta} - \theta - 1) \cdot (\phi/3) +  \theta \cdot \mu,
\end{align*}
where we have recalled the definitions of the statistics $\phi$ and $\mu$.
It follows that
\begin{align*}
\log \Expect \trace \econst^{\theta ( \mtx{Y} + \mtx{\Delta})}
	&\leq \log d + \theta \mu + (\econst^\theta - \theta - 1)(\phi/3 + \sigma_*^2 ) \\
	&\leq \log d + \theta \mu + \frac{(\phi/3 + \sigma_*^2 )\theta^2 /2}{1 - \theta/3}.
\end{align*}
These standard mgf estimates arise within the proofs of Bennett's inequality
and Bernstein's inequality~\cite[Secs.~2.7 and 2.8]{BLM13:Concentration-Inequalities}.
The matrix Laplace transform method (Fact~\ref{fact:matrix-lt})
now furnishes expectation inequalities and tail bounds
for $\lambda_{\max}(\mtx{Y} + \mtx{\Delta})$.
The remaining details---optimization over $\theta$ and further bounds---are routine.
For example, see~\cite{BLM13:Concentration-Inequalities,Tro12:User-Friendly,MJCFT14:Matrix-Concentration,Tro15:Introduction-Matrix}.
\end{proof}

\subsection{Step 5: Unbounded summands via truncation}

Finally, we will remove the assumption that the maximum eigenvalues
of the summands are bounded above.
The next proposition restates Theorem~\ref{thm:maxeig-unbdd}
using the definitions from Theorem~\ref{thm:maxeig-compare}.

\begin{proposition}[Truncation: Maximum eigenvalue] \label{prop:truncation}
Instate the notation and assumptions from Theorem~\ref{thm:maxeig-compare}.
But this time, we allow the summands $\mtx{W}_i$ to be unbounded,
and we define %
\[
M \coloneqq \max\nolimits_i \lambda_{\max}(\mtx{W}_i).
\]
For a parameter $R \geq 2 \cdot \Expect M$,
define the upper bound statistic
\[
R_+ \coloneqq R + \max\nolimits_i \sigma_*(\mtx{W}_i).
\]
For each $s \geq 0$, the maximum eigenvalue
admits the tail probability bound
\begin{multline*}
\Probe \Big\{ \lambda_{\max}(\mtx{Y} + \mtx{\Delta}) \geq \Expect \lambda_{\max}(\mtx{Z} + \mtx{\Delta})
	+ \sqrt{2 \sigma_*^2(\mtx{Z})} \\
	+ \sqrt{\big(\tfrac{1}{3} R_+ \phi(\mtx{Z}) + \sigma_*^2(\mtx{Z}) \big) \cdot 2s} + \tfrac{1}{3} R_+ s \Big\}
	\leq \Prob{M > R} + d \cdot \econst^{-s}.
\end{multline*}
\end{proposition}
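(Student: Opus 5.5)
Truncate each summand from above at level $R$ and recenter. Concretely, define $\mtx{W}_i' \coloneqq \mtx{W}_i \indicator\{\lambda_{\max}(\mtx{W}_i) \leq R\}$, or any spectral truncation that agrees with $\mtx{W}_i$ on the event $\{M \leq R\}$. Then set $\mtx{W}_i'' \coloneqq \mtx{W}_i' - \Expect \mtx{W}_i'$ and $\mtx{Y}'' \coloneqq \sum_i \mtx{W}_i''$.

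On the event $\{M \leq R\}$ we have $\mtx{Y} = \mtx{Y}'' + \sum_i \Expect \mtx{W}_i'$. Therefore
\[
\Prob{\lambda_{\max}(\mtx{Y}+\mtx{\Delta}) \geq u} \leq \Prob{M>R} + \Prob{\lambda_{\max}(\mtx{Y}'' + \mtx{\Delta}'') \geq u},
\quad\text{where } \mtx{\Delta}'' \coloneqq \mtx{\Delta} + \textstyle\sum_i \Expect \mtx{W}_i'.
\]
We then apply Proposition~\ref{prop:maxeig-prob} to $\mtx{Y}''$ and $\mtx{\Delta}''$. This requires three ingredients: an upper bound statistic for $\mtx{W}_i''$, a comparison of the Gaussian proxy $\mtx{Z}''$ of $\mtx{Y}''$ with $\mtx{Z}$, and control of the shift $\sum_i \Expect \mtx{W}_i'$.

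For the upper bound, $\lambda_{\max}(\mtx{W}_i'') \leq R + \lambda_{\max}(-\Expect \mtx{W}_i')$. Since $\Expect \mtx{W}_i = \mtx{0}$, we have $-\Expect \mtx{W}_i' = \Expect[\mtx{W}_i \indicator\{\lambda_{\max}(\mtx{W}_i) > R\}]$. The quadratic form of this matrix in a unit vector $\vct{u}$ is bounded by Cauchy--Schwarz:
\[
\Expect[\vct{u}^*\mtx{W}_i\vct{u} \cdot \indicator] \leq \sigma_*(\mtx{W}_i) \, \Prob{\lambda_{\max}(\mtx{W}_i) > R}^{1/2} \leq \sigma_*(\mtx{W}_i),
\]
so $R_+ = R + \max_i \sigma_*(\mtx{W}_i)$ is a valid bound.

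For the variance, truncation combined with centering should not increase it. We would like $\Varo[\mtx{W}_i''] \psdle \Varo[\mtx{W}_i]$, which holds if $\Var[\ip{\mtx{W}_i \indicator}{\mtx{M}}] \leq \Expect \ip{\mtx{W}_i}{\mtx{M}}^2$; this is immediate because the variance is at most the second moment and $\indicator^2 \leq 1$. Fact~\ref{fact:gauss-mono} then gives $\phi(\mtx{Z}'') \leq \phi(\mtx{Z})$ and $\sigma_*^2(\mtx{Z}'') \leq \sigma_*^2(\mtx{Z})$.

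The main obstacle is the mean term $\mu'' = \Expect \lambda_{\max}(\mtx{Z}'' + \mtx{\Delta}'')$, which we must bound by $\Expect \lambda_{\max}(\mtx{Z} + \mtx{\Delta}) + \sqrt{2\sigma_*^2(\mtx{Z})}$.

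We first split the shift: $\lambda_{\max}(\mtx{Z}'' + \mtx{\Delta}'') \leq \lambda_{\max}(\mtx{Z}'' + \mtx{\Delta}) + \lambda_{\max}(\sum_i \Expect \mtx{W}_i')$. Then we decompose $\mtx{Z} \sim \mtx{Z}'' + \mtx{Z}'''$, where $\mtx{Z}'''$ is an independent centered Gaussian with the nonnegative difference variance. Jensen's inequality over $\mtx{Z}'''$ (a special case of Fact~\ref{fact:gauss-mono}) gives $\Expect\lambda_{\max}(\mtx{Z}'' + \mtx{\Delta}) \leq \mu$.

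It remains to show $\lambda_{\max}(\sum_i \Expect \mtx{W}_i') \leq \sqrt{2\sigma_*^2(\mtx{Z})}$, and here the hypothesis $R \geq 2\Expect M$ should enter. Fix a unit vector $\vct{u}$. Then
\[
\textstyle\sum_i \vct{u}^*\Expect[\mtx{W}_i']\vct{u} = -\Expect\big[\sum_i \vct{u}^*\mtx{W}_i\vct{u} \, \indicator_i\big],
\quad\text{where } \indicator_i \coloneqq \indicator\{\lambda_{\max}(\mtx{W}_i) > R\}.
\]
By Cauchy--Schwarz this is at most $\big(\sum_i \Var[\vct{u}^*\mtx{W}_i\vct{u}]\big)^{1/2}\big(\sum_i \Prob{\lambda_{\max}(\mtx{W}_i) > R}\big)^{1/2}$. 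The first factor is at most $\sigma_*(\mtx{Z})$. For the second, we would try $\sum_i \Prob{\lambda_{\max}(\mtx{W}_i) > R} \leq 2$ via a Hoffmann-J{\o}rgensen style bound. Writing $p_i$ for these probabilities, we have $\Prob{M > R} \geq 1 - \prod_i(1-p_i) \geq 1 - \econst^{-\sum_i p_i}$, while Markov's inequality gives $\Prob{M > R} \leq \Expect M / R \leq 1/2$. Together these force $\sum_i p_i \leq \log 2 \leq 2$, which closes the argument with room to spare. This requires $M \geq 0$, which holds because each $\mtx{W}_i$ is centered. Finally, Proposition~\ref{prop:maxeig-prob}, applied with $R_+$ in place of $R$, yields the stated tail bound.
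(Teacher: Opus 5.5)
Your overall argument follows the paper: the same truncation $\indicator\{\lambda_{\max}(\mtx{W}_i)\le R\}\,\mtx{W}_i$, the same Cauchy--Schwarz bound giving $R_+$, the same variance monotonicity, and the same Weyl plus Gaussian-monotonicity reduction of the mean term to $\lambda_{\max}(\sum_i \Expect \mtx{W}_i')$. The one place you depart from the paper is the bound $\sum_i p_i \le 2$, where $p_i \coloneqq \Prob{\lambda_{\max}(\mtx{W}_i) > R}$. Your justification of that bound fails.

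Centering does not make $M$ nonnegative. The condition $\Expect \mtx{W}_i = \mtx{0}$ only gives $\Expect \lambda_{\max}(\mtx{W}_i) \ge 0$, and $\lambda_{\max}(\mtx{W}_i) < 0$ whenever $\mtx{W}_i$ is negative definite (take $\mtx{W}_i = \pm \Id$). So Markov's inequality $\Prob{M>R}\le \Expect M/R$ is unavailable, and the intermediate claim $\Prob{M>R} \le 1/2$ is false. For a counterexample, take $n=d=1$ and let $W_1$ equal $1$ with probability $9/10$ and $-9$ with probability $1/10$. Then $\Expect M = 0$, so $R = 1/2$ is admissible, yet $\Prob{M>R} = 9/10$ and $\sum_i p_i = 9/10 > \log 2$.

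The missing idea is the leave-one-out device the paper uses. Set $M_{-i} \coloneqq \max_{j\neq i}\lambda_{\max}(\mtx{W}_j)$. Condition on $(\mtx{W}_j)_{j\neq i}$ and apply Jensen's inequality to the convex map $\mtx{W}_i \mapsto \max\{\lambda_{\max}(\mtx{W}_i), M_{-i}\}$. This gives $\Expect M \ge \Expect \max\{0, M_{-i}\} = \Expect[(M_{-i})_+]$. Now Markov's inequality applies to the nonnegative variable $(M_{-i})_+$ and yields $\prod_{j\neq i}(1-p_j) = \Prob{M_{-i}\le R} \ge 1/2$. Hence $\sum_{j\neq i} p_j \le \log 2$, and so $\sum_j p_j \le 1+\log 2 < 2$, which is exactly what your Cauchy--Schwarz step needs. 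With this repair your proof is complete.

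After the repair, your shift estimate differs from the paper's only in presentation. The paper writes $\Expect[\indicator\{\lambda_{\max}(\mtx{W}_i)\le R\}\,\mtx{W}_i] = \alpha_i^{-1}\Expect[\indicator\{M\le R\}\,\mtx{W}_i]$ with $\alpha_i = \Prob{M_{-i}\le R}\ge 1/2$. It then applies one Cauchy--Schwarz step against $\indicator\{M\le R\}$, using orthogonality of the centered independent summands. You instead apply Cauchy--Schwarz term by term against $\indicator\{\lambda_{\max}(\mtx{W}_i) > R\}$ and then in $\ell_2$. Both routes rest on the same leave-one-out bound $\Prob{M_{-i}>R}\le 1/2$, and both give a constant at most $\sqrt{2}$.
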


This result follows from a truncation argument;
for related examples, see~\cite[Ch.~6]{LT91:Probability-Banach}
or~\cite[Sec.~8]{BvH24:Universality-Sharp}.
In our setting, we can exploit monotonicity properties
of Gaussian random matrices to simplify some of the steps.

\begin{remark}[Expectation bounds]
We can also obtain an expectation bound
by integrating the probability bound from Proposition~\ref{prop:truncation},
as in~\cite[Sec.~8]{BvH24:Universality-Sharp}.
The resulting estimate is somewhat misleading
because it contains parasitic factors of $\log d$,
so we have chosen to omit the statement.
It seems to be delicate to obtain
an accurate bound for the expectation.
\end{remark}

\subsubsection{Truncation}

As in Theorem~\ref{thm:maxeig-compare},
let $\mtx{Y}$ be a centered independent sum,
where the summands $\mtx{W}_1, \dots, \mtx{W}_n$
may be unbounded.
Let $\mtx{Z} \sim \normal(\Expect[\mtx{Y}], \Varo[\mtx{Y}])$ be the Gaussian proxy
of the sum $\mtx{Y}$.
Set the minimum truncation level
\[
R_0 \coloneqq 2 \cdot \Expect \max\nolimits_i \lambda_{\max}(\mtx{W}_i).
\]
For a parameter $R \geq R_0$, define the events
\[
\set{B}_i \coloneqq \{ \lambda_{\max}(\mtx{W}_i) \leq R \}
\quad\text{and}\quad
\set{B} \coloneqq \bigcap_{i=1}^n \set{B}_i.
\]
Consider the truncated sum and its Gaussian counterpart:
\[
\mtx{Y}_R \coloneqq \sum_{i=1}^n \indicator_{\set{B}_i} \mtx{W}_i
\quad\text{and}\quad
\mtx{Z}_R \sim \normal( \Expect[\mtx{Y}_R], \Varo[\mtx{Y}_R] ).
\]
On the event $\set{B}$, the original sum agrees with the truncated sum:
$ \indicator_{\set{B}}\mtx{Y} = \indicator_{\set{B}}\mtx{Y}_R $.
We plan to apply the main comparison theorem (Theorem~\ref{thm:maxeig-main}) to the truncated sum $\mtx{Y}_R$.
To do so, we need to understand the statistics of the matching Gaussian distribution $\mtx{Z}_R$.
Note that truncation changes the upper bound parameter for the 
summands, as well as the expectation and variance of the sum,
so we must take care.

\subsubsection{Minimum truncation level}

To begin, we outline the reason for choosing the minimum truncation level.
For each index $i$, define the real random variable
\[
M_{-i} \coloneqq \max\nolimits_{j \neq i} \lambda_{\max}(\mtx{W}_j)
\quad\text{and}\quad
M \coloneqq \max\nolimits_j \lambda_{\max}(\mtx{W}_j).
\]
Writing $\Expect_i$ for the expectation with respect to $\mtx{W}_i$ only,
\[
\Expect M %
	\geq \Expect \max\{ \lambda_{\max}(\Expect_{i} \mtx{W}_i), \Expect_i M_{-i} \}
	= \Expect \max\{ 0, M_{-i} \}
	= \Expect[ (M_{-i})_{+} ].
\]
We have employed independence and Jensen's inequality.
As usual, $(\cdot)_+$ returns the positive part of a real number.
The truncation parameter $R \geq R_0 \coloneqq 2 \cdot \Expect M$, so
Markov's inequality yields
\begin{equation} \label{eqn:bd-M-i}
\Prob{ M_{-i} > R }
	\leq R^{-1} \Expect[ (M_{-i})_+ ]
	\leq R^{-1} \Expect[ M ]
	\leq \tfrac{1}{2}.
\end{equation}
In other words, $M_{-i}$ is not likely to
exceed the truncation level $R$.

\subsubsection{Expectation shift}

Next, we must control how much the truncation deforms the expectation
of the random matrix model.  This argument is adapted from~\cite[Lem.~8.4]{BvH24:Universality-Sharp}.

For each index $i$, introduce the scale factor
\[
\alpha_i \coloneqq \Prob{ M_{-i} \leq R }
	= \Probe( {\textstyle\bigcap_{j \neq i} \set{B}_j} )
	\geq \Probe( \set{B} ).
\]
By the relation~\eqref{eqn:bd-M-i} from the previous section, each factor $\alpha_i \geq \tfrac{1}{2}$.
Furthermore,
\begin{equation} \label{eqn:scale-factor-indicator}
\alpha_i \cdot \Expect[ \indicator_{\set{B}_i} \condbar \mtx{W}_i ]
	= \Expect[ \indicator_{\cap_{j\neq i}\set{B}_j} \indicator_{\set{B}_i}  \condbar \mtx{W}_i ]
	= \Expect[ \indicator_{\set{B}} \condbar \mtx{W}_i ].
\end{equation}
We have exploited the independence of the family $(\mtx{W}_j)$ and the associated
events $(\set{B}_j)$.

The truncated sum $\mtx{Y}_R$ and its Gaussian counterpart $\mtx{Z}_R$
share the same expectation, while the original sum $\mtx{Y}$
induces a centered Gaussian model $\mtx{Z}$.  In view of~\eqref{eqn:scale-factor-indicator},
\[
\Expect \mtx{Z}_R - \Expect \mtx{Z}
	= \Expect \mtx{Y}_R
	=  \Expect\left[ \sum_{i=1}^n \indicator_{\set{B}_i}\mtx{W}_i \right ]
	=  \Expect\left[ \indicator_{\set{B}} \sum_{i=1}^n \alpha_i^{-1} \mtx{W}_i \right].
\]
By the Rayleigh--Ritz principle for the maximum eigenvalue~\cite[Cor.~III.1.2]{Bha97:Matrix-Analysis},
\begin{equation}  \label{eqn:trunc-mean-shift}
\begin{aligned}
\lambda_{\max}( \Expect \mtx{Z}_R - \Expect \mtx{Z} )
	&= \max_{\norm{\vct{u}} = 1}
	\Expect\left[ \indicator_{\set{B}} \sum_{i=1}^n \alpha_i^{-1} \vct{u}^* \mtx{W}_i \vct{u} \right] \\
	&\leq \max\nolimits_i \ [ \Probe(\set{B})^{1/2} \alpha_i^{-1} ] \cdot
	\max_{\norm{\vct{u}}=1}  \left[  \sum_{i=1}^n \Expect \left(\vct{u}^* \mtx{W}_i \vct{u} \right)^2 \right]^{1/2} \\
	&\leq \sqrt{2 \sigma_*^2(\mtx{Y})}
	= \sqrt{2 \sigma_*^2(\mtx{Z})}.
\end{aligned}
\end{equation}
The first inequality depends on Cauchy--Schwarz and H{\"o}lder.  This argument also exploits
the centering and independence of the summands.
Last, recall that $\Probe(\set{B}) \leq \alpha_i$ and $\alpha_i \geq 1/2$, and recognize the weak variance~\eqref{eqn:weak-var} of the independent sum $\mtx{Y}$. %

\subsubsection{Comparison of variance statistics}

Next, observe that truncation \hilite{decreases} the variance function.  For each self-adjoint
matrix $\mtx{M}$,
\begin{equation} \label{eqn:trunc-var}
\begin{aligned}
\Varo[ \mtx{Z}_R ](\mtx{M}) &= \Varo[ \mtx{Y}_R ](\mtx{M})
	= \sum_{i=1}^n \Var[ \ip{  \indicator_{\set{B}_i}\mtx{W}_i }{ \mtx{M} } ] \\
	&\leq \sum_{i=1}^n \Expect[ \ip{ \indicator_{\set{B}_i}\mtx{W}_i  }{ \mtx{M} }^2 ]
	\leq \sum_{i=1}^n \Expect[ \ip{ \mtx{W}_i }{ \mtx{M} }^2 ] \\
	&=  \sum_{i=1}^n \Var[ \ip{ \mtx{W}_i }{ \mtx{M} } ]
 	= \Varo[ \mtx{Y} ](\mtx{M})
	= \Varo[ \mtx{Z} ](\mtx{M}).
\end{aligned}
\end{equation}
As a consequence of~\eqref{eqn:trunc-var}, the weak variance statistics
satisfy $\sigma_*^2(\mtx{Z}_R) \leq \sigma_*^2(\mtx{Z})$.

These bounds also yield comparisons for the other statistics of the Gaussian models.
Since the maximum eigenvalue is convex, %
the inequality~\eqref{eqn:trunc-var} for the variance functions implies %
\[
\Expect \lambda_{\max}(\mtx{Z}_R - \Expect \mtx{Z}_R + \mtx{\Delta})
\leq \Expect \lambda_{\max}(\mtx{Z} - \Expect \mtx{Z} + \mtx{\Delta})
\quad\text{for each self-adjoint $\mtx{\Delta}$.}
\]
This point follows from monotonicity (Fact~\ref{fact:gauss-mono}) %
for centered Gaussian distributions. %
Taking $\mtx{\Delta} = \mtx{0}$ in the last expression, we confirm
that the matrix fluctuation statistics $\phi(\mtx{Z}_R) \leq \phi(\mtx{Z})$.
Furthermore,
\begin{equation} \label{eqn:trunc-eig-shift}
\begin{aligned} 
\Expect \lambda_{\max}(\mtx{Z}_R + \mtx{\Delta})
	&= \Expect \lambda_{\max}( (\mtx{Z}_R - \Expect \mtx{Z}_R) + \Expect \mtx{Z}_R + \mtx{\Delta} ) \\
	&\leq \Expect \lambda_{\max}( (\mtx{Z} - \Expect \mtx{Z}) + \Expect \mtx{Z}_R + \mtx{\Delta} ) \\
	&\leq \Expect \lambda_{\max}( \mtx{Z} + \mtx{\Delta})
	+ \lambda_{\max}(\Expect \mtx{Z}_R - \Expect \mtx{Z}) \\
	&\leq \Expect \lambda_{\max}( \mtx{Z} + \mtx{\Delta}) + \sqrt{2 \sigma_*^2(\mtx{Z})}.
\end{aligned}
\end{equation}
The penultimate bound follows from Weyl's inequality for the maximum
eigenvalue~\cite[Cor.~III.2.2]{Bha97:Matrix-Analysis}.
The last line depends on our estimate~\eqref{eqn:trunc-mean-shift}
for the expectation shift.

\subsubsection{Upper bound parameter}

Last, we develop a simple estimate for the upper bound parameter of the
truncated summands.  Note that
\begin{equation} \label{eqn:trunc-upper-bound}
\begin{aligned}
\lambda_{\max}(\indicator_{\set{B}_i} \mtx{W}_i - \Expect[ \indicator_{\set{B}_i} \mtx{W}_i ] )
	&\leq \lambda_{\max}(\indicator_{\set{B}_i} \mtx{W}_i )
	+ \lambda_{\max}( - \Expect[ \indicator_{\set{B}_i} \mtx{W}_i ] ) \\
	&\leq R + \max\nolimits_i \sigma_*(\mtx{W}_i)
	\eqqcolon R_+.
\end{aligned}
\end{equation}
The detailed argument is similar with the proof of~\eqref{eqn:trunc-mean-shift},
where we employ the trivial bound $\Probe(\set{B}_i) \leq 1$.

\subsubsection{Proof of Proposition~\ref{prop:truncation}}

We are now prepared to establish the tail bound stated in Proposition~\ref{prop:truncation}.
For a parameter $s \geq 0$, define levels
\begin{equation} \label{eqn:u-uR}
\begin{aligned}
u \coloneqq u(s) &\coloneqq \sqrt{\left(\tfrac{1}{3} R_+ \phi(\mtx{Z}) + \sigma_*^2(\mtx{Z})\right)\cdot 2s} + \tfrac{1}{3}R_+ s + \sqrt{2 \sigma_*^2(\mtx{Z})} \\
	&\geq \sqrt{\left(\tfrac{1}{3} R_+ \phi(\mtx{Z}_R) + \sigma_*^2(\mtx{Z}_R) \right) \cdot 2s} + \tfrac{1}{3} R_+ s + \sqrt{2 \sigma_*^2(\mtx{Z})} \\
	&\eqqcolon u_R + \sqrt{2 \sigma_*^2(\mtx{Z})}.
\end{aligned}
\end{equation}
The inequality holds because of the comparisons for the statistics
that we established in the previous subsection.
Now, for any self-adjoint $\mtx{\Delta}$,
\begin{align*}
&\Prob{ \set{B} \ \text{and}\ \lambda_{\max}(\mtx{Y} + \mtx{\Delta}) \geq \Expect \lambda_{\max}(\mtx{Z} + \mtx{\Delta}) + u } \\
&\qquad	= \Prob{ \set{B} \ \text{and}\ \lambda_{\max}(\mtx{Y}_R + \mtx{\Delta}) \geq \Expect \lambda_{\max}(\mtx{Z} + \mtx{\Delta}) + u } \\
&\qquad \leq \Prob{ \set{B} \ \text{and}\ \lambda_{\max}(\mtx{Y}_R + \mtx{\Delta}) \geq \Expect \lambda_{\max}(\mtx{Z}_R + \mtx{\Delta}) + u_R } \\
&\qquad \leq \Prob{ \lambda_{\max}(\mtx{Y}_R + \mtx{\Delta}) \geq \Expect \lambda_{\max}(\mtx{Z}_R + \mtx{\Delta}) + u_R }
	\leq d \cdot \econst^{-s}.
\end{align*}
The first equality holds because $\mtx{Y} = \mtx{Y}_R$ on the event $\set{B}$.
The first inequality follows from the eigenvalue comparison
stated in~\eqref{eqn:trunc-eig-shift} and the relationship~\eqref{eqn:u-uR} between $u$ and $u_R$.
The second inequality follows when we remove the intersection with the event $\set{B}$.
The last statement depends on the main comparison theorem (Theorem~\ref{thm:maxeig-main}),
applied to the truncated sum $\mtx{Y}_R + \mtx{\Delta}$ and its Gaussian proxy $\mtx{Z}_R + \mtx{\Delta}$
with upper bound parameter $R_+$. %

To conclude, we remove the intersection with $\set{B}$ from the event on the left-hand side
of the last display, and we transfer the probability of $\set{B}^\comp$ to the right-hand side:
\[
\Prob{ \lambda_{\max}(\mtx{Y} + \mtx{\Delta}) \geq \Expect  \lambda_{\max}(\mtx{Z} + \mtx{\Delta}) + u(s) }
	\leq \Probe(\set{B}^\comp) + d \cdot \econst^{-s}.
\]
Last, note that the event $\set{B}^\comp = \{ M > R \}$.
This point
completes the proof.
\hfill\qed

\section{Comparison for matrix martingales}
\label{sec:freedman}

Matrix martingales generalize the independent sum model,
and they arise in a range of contemporary applications~\cite{Tro26:Applied-Random}.
This section develops new concentration inequalities for
matrix martingales by means of the Gaussian comparison technique.
To the best of our knowledge, the literature contains
no similar results.

\subsection{Matrix martingales}

We work in a Polish probability space $(\Omega, \coll{F}, \Probe)$
that is rich enough to support all random variables required.
A (finite) filtration is an increasing collection of sub-sigma-algebras:
\[
\{\emptyset, \Omega\} \coloneqq \coll{F}_0 \subseteq \coll{F}_1 \subseteq \coll{F}_2
	\subseteq \dots \subseteq \coll{F}_n \subseteq \coll{F}.
\]
A finite sequence $(\mtx{Y}_0, \dots, \mtx{Y}_n)$ of random self-adjoint matrices in $\Sym_d(\F)$
is an \term{$\set{L}_2$ matrix martingale} with respect to the filtration $(\coll{F}_0, \dots, \coll{F}_n)$ when
it satisfies three properties:  %
\begin{enumerate}
\item	\textbf{Adaptation:} $\mtx{Y}_i$ is $\coll{F}_i$-measurable for each $i = 0, \dots, n$.
\item	\textbf{Square-integrability:} $\Expect \norm{\mtx{Y}_i}^2 < + \infty$ for each $i = 0, \dots, n$.
\item	\textbf{Status quo:} $\Expect[ \mtx{Y}_{i+1} \condbar \coll{F}_i ] = \mtx{Y}_i$ for each $i = 0, \dots, n - 1$.
\end{enumerate}
We place the convention that the initial element of the martingale is deterministic,
and we often suppress the filtration when introducing a martingale.
Throughout this section, the index $i \in \{0,1,2,\dots,n\}$, unless specifically noted.

The \term{difference sequence} of the martingale comprises the random self-adjoint matrices
$\mtx{W}_k \coloneqq \mtx{Y}_k - \mtx{Y}_{k-1}$ indexed by $k = 1, \dots, n$.
The \term{increment} $\mtx{W}_k$ is measurable with respect to $\coll{F}_k$.
By the status quo property, the increment is conditionally centered:
$\Expect[ \mtx{W}_k \condbar \coll{F}_{k-1} ] = \mtx{0}$.

To an $\set{L}_2$ matrix martingale, we can associate a random sequence of
\term{predictable variance functions}.
By convention, $\set{V}_0 \coloneqq \set{0}$.  For each $k = 1, \dots, n$,
\[
\set{V}_k : \Sym_d(\F) \to \R_+
\quad\text{where}\quad
\set{V}_k : \mtx{M} \mapsto \sum_{i=1}^k \Var[ \ip{\mtx{W}_i}{\mtx{M}} \condbar \coll{F}_{i-1} ].
\]
In other words, the predictable variance accumulates our best guess for the
variance function of the next increment in the difference sequence.
Let us emphasize that $\set{V}_{k+1}$ is measurable with respect to $\coll{F}_{k}$,
so we can construct stopping times that depend on the next value
of the predictable variance.

\begin{remark}[Versions]
In general, the predictable variance functions may take arbitrary values
on negligible events.  The symbols $(\set{V}_0, \dots, \set{V}_n)$
denote a fixed version of the sequence, defined everywhere on the probability space.
We can---and will always---select a version of the sequence that increases:
$\set{V}_0 \psdle \set{V}_1 \psdle \set{V}_2 \psdle \dots \psdle \set{V}_n$ pointwise.
To be precise, for two random variance functions,
we say that $\set{V} \psdle \set{V}'$ pointwise when
$\set{V}(\mtx{M}; \omega) \leq \set{V}'(\mtx{M}; \omega)$
for all matrices $\mtx{M} \in \Sym_d(\F)$
and for all sample points $\omega \in \Omega$.
This caution is unnecessary for simple examples, hence some readers
may prefer to pass over these technicalities.
\end{remark}

\subsection{The matrix Freedman inequality via comparison}

Using the technology from this paper, we can compare
the entire trajectory of a matrix martingale
against a Gaussian reference model.
We interpret this result as a Gaussian comparison version
of the matrix Freedman inequality~\cite{Oli10:Spectrum-Random,Tro11:Freedmans-Inequality}.

\begin{theorem}[Comparison: Maximum eigenvalue of a matrix martingale] \label{thm:freedman}
Consider an $\set{L}_2$ matrix martingale
$(\mtx{Y}_0, \dots, \mtx{Y}_n)$ taking values in $\Sym_d(\F)$,
with increments $(\mtx{W}_1, \dots, \mtx{W}_n)$,
and with predictable variance functions $(\set{V}_0, \set{V}_1, \dots, \set{V}_n)$.
Assume that each increment satisfies the bound
$\lambda_{\max}(\mtx{W}_i) \leq R$ almost surely.

Fix a deterministic variance function $\set{V} : \Sym_d(\F) \to \R_+$,
and construct a Gaussian matrix $\mtx{Z} \sim \normal(\mtx{Y}_0, \set{V})$.
Define the statistics
\[
\mu \coloneqq \Expect \lambda_{\max}(\mtx{Z})
\quad\text{and}\quad
\phi \coloneqq \Expect \lambda_{\max}(\mtx{Z} - \Expect \mtx{Z})
\quad\text{and}\quad
\sigma_*^2 \coloneqq \sup\nolimits_{\norm{\vct{u}}=1} \Var[\vct{u}^*\mtx{Z} \vct{u}].
\]
For each $s \geq 0$, the maximum eigenvalue of the matrix martingale
satisfies the tail bound
\[
\Prob{ \exists i : \text{$\set{V}_i \psdle \set{V}$} \ \text{and} \ \lambda_{\max}(\mtx{Y}_i) \geq \mu 	+ \sqrt{\left(\tfrac{1}{3} R \phi + \sigma_*^2 \right) \cdot 2s} + \tfrac{1}{3} Rs }
	\leq d \cdot \econst^{-s}.
\]
\end{theorem}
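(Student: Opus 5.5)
Fix $\theta \geq 0$ and write $g \coloneqq g_R(\theta)$ as in Proposition~\ref{prop:exchange}. The plan is a Freedman-type supermartingale built from the one-step comparison (Proposition~\ref{prop:compare-one}), with the Gaussian reference model absorbing the unused variance. Let $\mtx{X}$ be a Gaussian matrix, independent of everything, and let $\mtx{G}_i$ be a conditionally Gaussian matrix, independent of $\mtx{X}$, with $\mtx{G}_i \condbar \coll{F}_i \sim \normal(\mtx{0}, \set{V} - \set{V}_i)$. On the event $\set{V}_i \psdle \set{V}$ this is a legitimate variance function; I will handle the complementary event by stopping.

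Define the process
\[
S_i \coloneqq \Expect\big[ \trace \exp\big( \theta \mtx{Y}_i + g \mtx{G}_i \big) \condbar \coll{F}_i \big],
\]
where the expectation averages only over the Gaussian $\mtx{G}_i$ given $\coll{F}_i$. The key step is to show that $S_i$ is a supermartingale as long as $\set{V}_{i+1} \psdle \set{V}$. Conditional on $\coll{F}_i$, the increment $\mtx{W}_{i+1}$ is centered, has variance function $\set{V}_{i+1} - \set{V}_i$ (which is $\coll{F}_i$-measurable), and satisfies $\lambda_{\max}(\theta \mtx{W}_{i+1}) \leq \theta R$. By Gaussian stability, $\mtx{G}_i \sim \mtx{X}_{i+1} + \mtx{G}_{i+1}$, where $\mtx{X}_{i+1} \sim \normal(\mtx{0}, \set{V}_{i+1} - \set{V}_i)$ is independent. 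Apply Proposition~\ref{prop:compare-one} conditionally, with $\mtx{A} = \theta \mtx{Y}_i + g \mtx{G}_{i+1}$ held fixed and $\theta \mtx{W}_{i+1}$ in place of $\mtx{W}$; its Gaussian match is $\theta \mtx{X}_{i+1}$ and the scale factor is $g_{\theta R}(1) / \theta = g_R(\theta)/\theta$. This gives
\[
\Expect[ S_{i+1} \condbar \coll{F}_i ] \leq S_i .
\]
The Gaussian part $\mtx{G}_{i+1}$ is fine to condition on because $\set{V}_{i+1}$ is $\coll{F}_i$-measurable.

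There is a discrepancy between the two scalings: the martingale carries the factor $\theta$ while the Gaussian carries $g$. I would resolve it by working instead with
\[
T_i \coloneqq \Expect\big[ \trace \exp\big( \theta \mtx{Y}_i + g(\mtx{G}_i - \text{its part}) \big) \big],
\]
where, more simply, the accumulated variance of $\mtx{Y}_i$ is also represented at scale $g$. Concretely, Proposition~\ref{prop:exchange} already shows that replacing $\theta \mtx{W}$ by $g \mtx{X}$ is favorable. So I will prove the supermartingale property for the exponent $\theta \mtx{Y}_i + g \mtx{G}_i$, where the comparison in Proposition~\ref{prop:compare-one} replaces $\theta \mtx{W}_{i+1}$ by $g \mtx{X}_{i+1}$, which merges with $g \mtx{G}_{i+1}$ to form $g \mtx{G}_i$. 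Indeed $\theta \mtx{Y}_{i+1} + g \mtx{G}_{i+1} = \theta \mtx{Y}_i + \theta \mtx{W}_{i+1} + g \mtx{G}_{i+1}$, so the comparison yields $\theta \mtx{Y}_i + g(\mtx{X}_{i+1} + \mtx{G}_{i+1}) \sim \theta \mtx{Y}_i + g \mtx{G}_i$, as required. The starting value is
\[
S_0 = \Expect \trace \exp\big( \theta \mtx{Y}_0 + g(\mtx{Z} - \mtx{Y}_0) \big),
\]
which is exactly the object bounded in Proposition~\ref{prop:gauss-bound} and Proposition~\ref{prop:maxeig-prob}, with $\mtx{\Delta} = \mtx{Y}_0$.

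Next I introduce the stopping time $\tau \coloneqq \min\{ i : \lambda_{\max}(\mtx{Y}_i) \geq \mu + u \}$, where $u$ is the threshold in the statement. I then form the further stopping time $\kappa \coloneqq \min\{ i : \set{V}_{i+1} \not\psdle \set{V} \}$, which is valid because $\set{V}_{i+1}$ is predictable. The stopped process $S_{i \wedge \tau \wedge \kappa}$ is a positive supermartingale, so $\Expect S_{\tau \wedge \kappa \wedge n} \leq S_0$. On the event in question, $\tau \leq \kappa$ and the hitting time is reached at a point where $\set{V}_\tau \psdle \set{V}$. There I lower-bound
\[
S_\tau \geq \trace \exp\big( \Expect[\theta \mtx{Y}_\tau + g \mtx{G}_\tau \condbar \coll{F}_\tau] \big) \geq \econst^{\theta \lambda_{\max}(\mtx{Y}_\tau)},
\]
using Jensen's inequality for the convex map $\trace\exp$ and the fact that $\mtx{G}_\tau$ is centered.

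Markov's inequality then gives
\[
\Probe\{\cdots\} \leq \econst^{-\theta(\mu + u)} S_0 .
\]
The routine mgf computation from the proof of Proposition~\ref{prop:maxeig-prob} bounds $\log S_0 \leq \log d + \theta \mu + (\econst^{\theta R} - \theta R - 1)(\phi/3 + \sigma_*^2/R)/R$. Optimizing over $\theta$ yields the stated tail bound $d \cdot \econst^{-s}$.

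I expect the main obstacle to be measure-theoretic rather than analytic. Constructing the conditional Gaussians $\mtx{G}_i$ consistently requires a measurable square root of $\set{V} - \set{V}_i$ and the Polish-space assumption. Justifying the conditional application of Proposition~\ref{prop:compare-one} requires the integrability exchanges. The stopping argument also needs care, and this is where the increasing version of $(\set{V}_i)$ matters: once $\set{V}_i \psdle \set{V}$ fails, it fails forever, so the events can be matched up.
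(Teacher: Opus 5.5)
Your proof is correct and is essentially the paper's argument. Your conditional Gaussian suffix $\mtx{G}_i \sim \normal(\mtx{0}, \set{V} - \set{V}_i)$ is the paper's $\mtx{T}_i$. Your one-step inequality $\Expect[S_{i+1} \condbar \coll{F}_i] \leq S_i$ (a conditional use of Proposition~\ref{prop:compare-one} that replaces $\theta \mtx{W}_{i+1}$ by $g_R(\theta) \mtx{X}_{i+1}$, then merges $\mtx{X}_{i+1} + \mtx{G}_{i+1} \sim \mtx{G}_i$) is exactly one step of the paper's backward chain. The stopping time $\kappa$, the increasing version of $(\set{V}_i)$, and the final mgf computation from Proposition~\ref{prop:maxeig-prob} play the same roles as in the paper.

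The only difference is packaging. You apply optional stopping directly to the Gaussian-smoothed supermartingale $S_i$ at the hitting time and use Jensen there. The paper instead proves the terminal bound on $\Expect \trace \econst^{\theta \mtx{S}_n}$ by backward induction and combines it with the Doob-type maximal inequality (Fact~\ref{fact:doob}) for the submartingale $\trace \econst^{\theta \mtx{S}_i}$. Your detour about a ``discrepancy'' between the scalings $\theta$ and $g$ is unnecessary: the replacement in your first paragraph already closes the recursion.
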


The proof of Theorem~\ref{thm:freedman} extends over the rest of this section.
The argument is inspired by classic proofs of the martingale central
limit theorem; for example, see Lalley's notes~\cite{Lal14:Martingale-Central}.
Our comparison tools (Proposition~\ref{prop:compare-one}) allow us to
execute this strategy in the matrix setting.
The analysis culminates with Proposition~\ref{prop:freedman-bds},
which contains several additional tail bounds.

\begin{remark}[Matrix Freedman: Infinite martingale sequences]
Theorem~\ref{thm:freedman} remains valid for infinite martingale
sequences.  This extension follows as a corollary after an
application of monotone convergence. %
\end{remark}

\subsection{Remarks}

Theorem~\ref{thm:freedman} compares the entire trajectory of the maximum eigenvalue
of a matrix martingale against the maximum eigenvalue of a Gaussian reference model.
The Gaussian matrix shares the same expectation as the martingale sequence,
and the variance function $\set{V}$ of the Gaussian is fixed in advance.
While the predictable variance function $\set{V}_i$ of the martingale remains smaller
than the reference variance $\set{V}$, %
the maximum eigenvalue $\lambda_{\max}(\mtx{Y}_i)$ of the martingale
is unlikely to be much larger than $\Expect \lambda_{\max}(\mtx{Z})$,
the expectation of the maximum eigenvalue of the reference model.

When we apply Theorem~\ref{thm:freedman} to an independent sum
of random self-adjoint matrices, we arrive at a stronger version of
Theorem~\ref{thm:maxeig-main} that also controls the trajectory
of the partial sums.
Modulo constants, Theorem~\ref{thm:freedman} improves over
the matrix Freedman inequality~\cite[Thm.~1.2]{Tro11:Freedmans-Inequality},
much as Theorem~\ref{thm:maxeig-main} improves over the
matrix Bernstein inequality~\cite[Thm.~6.1]{Tro12:User-Friendly}.
To the best of our knowledge, Theorem~\ref{thm:freedman} appears
to be the first nonasymptotic comparison result for matrix martingales
that can avoid a suboptimal dependence on matrix dimension in the leading term.
There is no obvious strategy for establishing a similar result
using existing universality arguments from
Brailovskaya \& van Handel~\cite{BvH24:Universality-Sharp}
or Tropp~\cite{Tro26:Universality-Laws}.

\subsection{Background: A maximal inequality for matrix martingales}

In place of the Laplace transform method (Fact~\ref{fact:matrix-lt}),
we employ a maximal inequality that secures the entire trajectory
of the maximum eigenvalue of the matrix martingale.

\begin{fact}[Exponential maximal inequality: Matrix martingale] \label{fact:doob}
Let $(\mtx{S}_0, \dots, \mtx{S}_n)$ be a matrix martingale that
takes self-adjoint values.  For each $t \in \R$, %
\[
\Prob{ \exists i : \lambda_{\max}(\mtx{S}_i) \geq t }
	\leq \inf\nolimits_{\theta > 0}\ \econst^{-\theta t} \cdot \Expect \trace \econst^{\theta \mtx{S}_n}.
\]
\end{fact}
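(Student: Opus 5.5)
The plan is to reduce Fact~\ref{fact:doob} to Doob's maximal inequality for a nonnegative submartingale. Fix $\theta > 0$ and set
\[
U_i \coloneqq \trace \econst^{\theta \mtx{S}_i}
\quad\text{for $i = 0, 1, \dots, n$.}
\]
If $U_n$ is not integrable, the bound holds trivially, so we assume $\Expect U_n < +\infty$. The event $\{\lambda_{\max}(\mtx{S}_i) \geq t\}$ is contained in $\{U_i \geq \econst^{\theta t}\}$. This is because $\trace \econst^{\theta \mtx{S}_i} \geq \econst^{\theta \lambda_{\max}(\mtx{S}_i)}$, since every eigenvalue of the matrix exponential is positive.

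It therefore suffices to show
\[
\Prob{\max\nolimits_i U_i \geq \econst^{\theta t}} \leq \econst^{-\theta t} \Expect U_n,
\]
and then take the infimum over $\theta > 0$. Doob's maximal inequality delivers this bound as soon as $(U_i)$ is a nonnegative submartingale. Nonnegativity is clear.

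The main step is the submartingale property,
\[
\Expect[U_{i+1} \condbar \coll{F}_i] \geq U_i.
\]
The map $\mtx{H} \mapsto \trace \econst^{\theta \mtx{H}}$ is convex on self-adjoint matrices, because $x \mapsto \econst^{\theta x}$ is convex and trace functions of convex scalar functions are convex. The conditional Jensen inequality for convex functions on the finite-dimensional space $\Sym_d(\F)$ then gives
\[
\Expect[\trace \econst^{\theta \mtx{S}_{i+1}} \condbar \coll{F}_i] \geq \trace \econst^{\theta \Expect[\mtx{S}_{i+1} \condbar \coll{F}_i]} = \trace \econst^{\theta \mtx{S}_i}.
\]
Applying the conditional Jensen inequality here is valid because the convex function is a supremum of countably many affine functionals, and the underlying space is Polish.

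Integrability of the intermediate $U_i$ also follows from Jensen's inequality: $\Expect U_i \leq \Expect U_n < \infty$, by iterating the tower property. To make this rigorous, one can argue with nonnegative extended-valued conditional expectations, where Jensen's inequality still applies. Once the submartingale property is in hand, Doob's inequality finishes the proof.

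The only delicate point, and the one where I expect the main obstacle, is justifying conditional Jensen for the matrix-valued argument when the exponential moments might be infinite. The assumption $\Expect U_n < \infty$, together with the reduction to supporting affine minorants $\mtx{H} \mapsto \trace[\econst^{\theta \mtx{A}}] + \theta \ip{\econst^{\theta \mtx{A}}}{\mtx{H} - \mtx{A}}$ taken over countably many $\mtx{A}$, handles it.
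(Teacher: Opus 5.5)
Your proof is correct and follows the same route as the paper, which also uses convexity of the trace exponential to make $\trace \econst^{\theta \mtx{S}_i}$ a nonnegative submartingale. The only difference is that the paper writes out the optional-stopping argument behind Doob's inequality, stopping at the first $i$ with $\lambda_{\max}(\mtx{S}_i) \geq t$, whereas you cite Doob's inequality for $U_i$ directly; your treatment of integrability and conditional Jensen fills in details the paper leaves implicit.
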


For completeness, we include a short proof.  The argument parallels
the strategy for establishing Doob's maximal
inequality~\cite[Sec.~12.6]{GS01:Probability-Random}.

\begin{proof}
Fix a parameter $\theta > 0$.
Since the trace exponential function is convex~\cite[Thm.~2.10]{Car10:Trace-Inequalities},
the scalar sequence $(\trace \econst^{\theta \mtx{S}_i} : i = 0, \dots, n)$
composes a positive submartingale.

Define a stopping time by selecting the first instant when
the maximum eigenvalue of the matrix martingale surmounts the level $t$:
\[
\kappa \coloneqq \inf\{ i : \lambda_{\max}(\mtx{S}_i) \geq t \}
\]
As usual, $\kappa = +\infty$ when the argument of the infimum is the empty set.
Introduce the event
\[
\set{B} \coloneqq \{ \kappa \leq n \}
	= \{ \exists i : \lambda_{\max}(\mtx{S}_i) \geq t \}.
\]
By the optional stopping theorem~\cite[Sec.~12.5]{GS01:Probability-Random},
we can compare the expectation of the terminal value
of the submartingale with the expectation of the stopped submartingale:
\[
\Expect \trace \econst^{\theta \mtx{S}_n}
	\geq \Expect \trace \econst^{\theta \mtx{S}_{n \wedge \kappa}}
	\geq \Expect[ \indicator_{\set{B}} \cdot \trace \econst^{\theta \mtx{S}_{n \wedge \kappa}} ]
	\geq \Expect[ \indicator_{\set{B}} \cdot \econst^{\theta \lambda_{\max}(\mtx{S}_{n \wedge \kappa})} ]
	\geq \Probe(\set{B}) \cdot \econst^{\theta t}.
\]
The indicator reduces the value of the expectation, and the
trace exponential is bounded below by the exponential of the maximum eigenvalue.
On the event $\set{B}$, we have a lower bound for the maximum eigenvalue.
This inequality implies the required estimate.
\end{proof}

\subsection{Step 1: Comparison for the trace mgf of a martingale}

In the proof of Theorem~\ref{thm:freedman},
the fundamental ingredient is a comparison
between the trace mgf of a matrix martingale
and a Gaussian reference model.
Our key result treats a special case where the total predictable
variance of the martingale is bounded by a fixed variance function.

\begin{proposition}[Comparison: Trace mgf of a martingale] \label{prop:martingale-mgf}
Consider an $\set{L}_2$ matrix martingale $(\mtx{S}_0, \dots, \mtx{S}_n)$,
taking self-adjoint values, with increments $(\mtx{W}_1, \dots, \mtx{W}_n)$,
and with predictable variance functions $(\set{V}_0, \set{V}_1, \dots, \set{V}_n)$.
Assume that each increment satisfies the bound $\lambda_{\max}(\mtx{W}_i) \leq R$
almost surely.

Fix a deterministic variance function $\set{V}$,
and assume that $\set{V}_n \psdle \set{V}$ pointwise. %
Construct a Gaussian matrix $\mtx{Z} \sim \normal(\mtx{S}_0, \set{V})$.
For a parameter $\theta \geq 0$,
\[
\Expect \trace \econst^{\theta \mtx{S}_n}
	\leq \Expect \trace \econst^{g_R(\theta) (\mtx{Z} - \mtx{S}_0) + \theta \mtx{S}_0}
	\quad\text{where}\quad
	g_R(\theta) \coloneqq \left[ \frac{\econst^{\theta R} - \theta R - 1}{R^2/2} \right]^{1/2}.
\] 
\end{proposition}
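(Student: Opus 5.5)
The plan is a martingale version of the Lindeberg exchange in Proposition~\ref{prop:exchange}. I would replace the martingale increments one at a time, starting from the first. At each step, a Gaussian matrix with the \emph{remaining} predictable variance stands in for the future increments. By a scaling argument, take $\theta = 1$ and abbreviate $g \coloneqq g_R(1)$.

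\emph{Construction.} Realize every Gaussian as a fixed linear map applied to one standard Gaussian vector $\vct{\gamma}$ in $\Sym_d(\F)$, drawn independent of $\coll{F}_n$. For a psd quadratic form $\set{U}$, let $\set{U}^{1/2}$ denote the psd square root of its covariance operator. Then $\set{U}^{1/2}\vct{\gamma} \sim \normal(\mtx{0}, \set{U})$, and the map $\set{U} \mapsto \set{U}^{1/2}$ is continuous. Since the predictable variance $\set{V}_k$ is $\coll{F}_{k-1}$-measurable and $\set{V}_k \psdle \set{V}$ pointwise, the residual form $\set{V} - \set{V}_k$ is a psd, $\coll{F}_{k-1}$-measurable variance function. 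Define the interpolants
\[
Q_k \coloneqq \Expect \trace \exp\big( \mtx{S}_k + g\, (\set{V} - \set{V}_k)^{1/2} \vct{\gamma}' \big),
\qquad k = 0, 1, \dots, n,
\]
where $\vct{\gamma}'$ is a standard Gaussian independent of everything else. Since $\set{V}_0 = \set{0}$, the endpoint $Q_0$ equals the right-hand side of the proposition. At the other end, the Gaussian term is centered and conditionally independent of $\mtx{S}_n$, and the trace exponential is convex. Jensen's inequality, applied conditionally on $\coll{F}_n$, therefore gives $\Expect \trace \econst^{\mtx{S}_n} \leq Q_n$. It remains to prove $Q_k \leq Q_{k-1}$ for each $k$ and then telescope.

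\emph{Exchange step.} Condition on $\coll{F}_{k-1}$. The forms $\set{U}' \coloneqq \set{V}_k - \set{V}_{k-1}$ and $\set{U}'' \coloneqq \set{V} - \set{V}_k$ are then fixed, and $\set{U}'$ is exactly the conditional variance function of $\mtx{W}_k$. Draw independent Gaussians $\mtx{X}' \sim \normal(\mtx{0},\set{U}')$ and $\mtx{X}'' \sim \normal(\mtx{0},\set{U}'')$. By Gaussian stability, $\mtx{X}' + \mtx{X}'' \sim \normal(\mtx{0}, \set{V} - \set{V}_{k-1})$, so the equality in distribution below holds conditionally on $\coll{F}_{k-1}$:
\[
Q_{k-1} = \Expect \trace \econst^{\mtx{S}_{k-1} + g\mtx{X}'' + g\mtx{X}'}.
\]
Similarly, $Q_k = \Expect \trace \econst^{\mtx{S}_{k-1} + g\mtx{X}'' + \mtx{W}_k}$. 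Now set $\mtx{A} \coloneqq \mtx{S}_{k-1} + g\mtx{X}''$. Conditionally on $(\coll{F}_{k-1}, \mtx{X}'')$, the matrix $\mtx{A}$ is fixed, and $\mtx{W}_k$ has the following properties: it is centered, it satisfies $\lambda_{\max}(\mtx{W}_k) \leq R$, and its variance function is $\set{U}'$, which is also the variance function of $\mtx{X}'$. Proposition~\ref{prop:compare-one} then yields
\[
\Expect\big[\trace \econst^{\mtx{A}+\mtx{W}_k} \condbar \coll{F}_{k-1}, \mtx{X}''\big] \leq \Expect\big[\trace \econst^{\mtx{A}+g\mtx{X}'} \condbar \coll{F}_{k-1}, \mtx{X}''\big].
\]
Taking expectations gives $Q_k \leq Q_{k-1}$.

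\emph{Main obstacle.} The substance is already contained in Proposition~\ref{prop:compare-one}; the delicate part is measure-theoretic. One must apply that proposition under regular conditional distributions, which exist because the space is Polish. One must also check that $\mtx{W}_k$ remains conditionally independent of $\mtx{X}''$ given $\coll{F}_{k-1}$; this holds because $\mtx{X}''$ is built from the fresh $\vct{\gamma}'$ and the $\coll{F}_{k-1}$-measurable form $\set{U}''$. Finally, every expectation must be finite so that the telescoping and conditional Jensen steps are legitimate. Finiteness follows because $\lambda_{\max}(\mtx{W}_i) \leq R$ bounds $\trace \econst^{\mtx{S}_k}$ by $d\,\econst^{\lambda_{\max}(\mtx{S}_0) + kR}$, and Gaussian exponential moments are finite with variance dominated by $\set{V}$.
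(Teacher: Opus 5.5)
Your proposal is correct and is essentially the paper's proof. The paper also adds a terminal Gaussian increment by conditional Jensen. It then exchanges each $\mtx{W}_k$ for $g_R(1)\mtx{X}_k$ via a conditional application of Proposition~\ref{prop:compare-one}, using $\mtx{X}_k + \mtx{T}_k \sim \mtx{T}_{k-1}$ given $\coll{F}_{k-1}$ to pass from $\Expect\trace\econst^{\mtx{S}_n + g\mtx{T}_n}$ down to $\Expect\trace\econst^{\mtx{S}_0 + g\mtx{T}_0}$. Your $Q_k$ are exactly $\Expect\trace\econst^{\mtx{S}_k + g\mtx{T}_k}$, and your square-root realization of the conditional Gaussians and your finiteness check make explicit measure-theoretic details that the paper leaves implicit.
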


The proof relies on an elegant technical insight,
extracted from~\cite{Lal14:Martingale-Central}.
Under the hypotheses of the proposition, we can add
a terminal Gaussian increment $\mtx{T}_n$ to the martingale
to absorb the excess variance ($\set{V} - \set{V}_n$).
The final matrix $\mtx{S}_n + \mtx{T}_n$ in the extended martingale
has total predictable variance $\set{V}$, which is no longer random.
This construction allows us to replace each increment of the extended martingale
by a Gaussian proxy, working backward from the end of the trajectory.

\begin{proof}
Without loss of generality, we may assume that the parameter $\theta = 1$.
To prove the result, we must design a family
of Gaussian random matrices, tailored to
the observed increments.
Table~\ref{tab:martingale-comp} summarizes the construction. %

At each time $k \in \{1, \dots, n\}$, we have observed
the martingale values $(\mtx{S}_0, \dots, \mtx{S}_k)$ %
and the increments $(\mtx{W}_1, \dots, \mtx{W}_k)$. %
We can also compute the predictable variance functions $(\set{V}_0, \set{V}_1, \dots, \set{V}_{k+1})$.
Let us emphasize that the relations
$\set{V}_0 \psdle \set{V}_1 \psdle \dots \psdle \set{V}_n \psdle \set{V}$
hold pointwise on the probability space,
owing to our conventions and hypotheses.
Now, observe that the increments have conditional variance functions
\[
\Var[ \ip{ \mtx{W}_{k} }{ \mtx{M} } \condbar \coll{F}_{k-1} ]
	= (\set{V}_{k} - \set{V}_{k-1})(\mtx{M})
	\quad\text{for $k = 1, \dots, n$.}
\]
We can choose the version of the conditional variance where the latter
relation holds pointwise. %
In the table, the martingale values, the increments,
and the predictable variance functions are positioned below
the sigma-algebra in which they are measurable.

\begin{table}[t]
\begin{center}
\caption{Construction of a Gaussian model for a matrix martingale
$(\mtx{S}_0, \dots, \mtx{S}_n)$.
At each index $k$, we form a Gaussian proxy $\mtx{X}_k$
for the increment $\mtx{W}_k$ and a Gaussian proxy $\mtx{T}_{k}$
for the unobserved increments of the martingale.}
\label{tab:martingale-comp}
\[
\begin{array}{l|cc|cccccccc}
\text{Sigma-algebra} &&
\coll{F}_0 & \coll{F}_1 & \coll{F}_2 & \cdots & \coll{F}_k & \cdots & \coll{F}_{n-2} & \coll{F}_{n-1} & \coll{F}_n \\
\hline\hline
\text{Martingale value} &&
\mtx{S}_0 & \mtx{S}_1 & \mtx{S}_2 & \cdots & \mtx{S}_k & \cdots & \mtx{S}_{n-2} & \mtx{S}_{n-1} & \mtx{S}_n \\
\text{Martingale increments} &&
& \mtx{W}_1 & \mtx{W}_2 & \cdots & \mtx{W}_k & \cdots & \mtx{W}_{n-2} & \mtx{W}_{n-1} & \mtx{W}_n \\
\text{Predictable variance functions} && \set{V}, \set{V}_0, \set{V}_1 & \set{V}_2 & \set{V}_3 & \cdots & \set{V}_{k+1} &\cdots & \set{V}_{n-1} & \set{V}_{n} & \\
\hline\hline
\text{Gaussian increment } &&
\mtx{X}_1 & \mtx{X}_2 & \mtx{X}_3 & \cdots & \mtx{X}_{k+1} &\cdots& \mtx{X}_{n-1} & \mtx{X}_n \\
\text{Gaussian suffix} &&
\mtx{T}_0, \mtx{T}_1 & \mtx{T}_2 & \mtx{T}_3 & \cdots & \mtx{T}_{k+1} & \cdots & \mtx{T}_{n-1} & \mtx{T}_n
\end{array}
\]
\end{center}
\end{table}

At time $k = 0$, independent from everything, draw the Gaussian matrix
\[
\mtx{T}_0 \sim \normal(\mtx{0}, \set{V} - \set{V}_0).
\]
We regard this random matrix as a proxy for the sum of all increments
($\mtx{W}_1 + \dots + \mtx{W}_n$), plus a terminal increment
that absorbs ``leftover'' variance. %
Note that $\mtx{T}_0 + \mtx{S}_0 \sim \mtx{Z}$.

Conditional on $\coll{F}_n$, draw a mutually independent family
of Gaussian matrices:
\[
\mtx{X}_{k} \sim \normal(\mtx{0}, \set{V}_{k} - \set{V}_{k-1})
\quad\text{and}\quad
\mtx{T}_{k} \sim \normal(\mtx{0}, \set{V} - \set{V}_{k})
\quad\text{for $k = 1, \dots, n$.}
\]
At each step $k$, the Gaussian matrix $\mtx{X}_k$ is a proxy for the increment $\mtx{W}_k$.
The Gaussian matrix $\mtx{T}_k$ is a proxy for the total of the unobserved
increments of the extended martingale ($\mtx{W}_{k+1} + \dots + \mtx{W}_{n}$
plus the terminal increment $\mtx{T}_n$).
Since $\set{V}_{k}$ and $\set{V}_{k-1}$ and $\set{V}$ are all $\coll{F}_{k-1}$-measurable,
the construction ensures that
\[
(\mtx{X}_k, \mtx{T}_k \condbar \coll{F}_{k-1})
\sim (\mtx{X}_k, \mtx{T}_k \condbar \coll{F}_n)
\quad\text{for $k = 1, \dots, n$.}
\]
Conditional on $\coll{F}_{k-1}$, we also have the relation
$\mtx{X}_k + \mtx{T}_k \sim \mtx{T}_{k-1}$.
In the table, the entries in the two Gaussian rows are positioned
to show when the variance functions are determined.

We can compare the trace mgfs of the martingale $\mtx{S}_n$ and the Gaussian model $\mtx{Z}$
through a chain of relations.
We begin with the terminal increment $\mtx{T}_n$ of the martingale.
By Jensen's inequality, conditional on $\coll{F}_n$,
we have the comparison
\begin{equation*} \label{eqn:martingale-mgf-def}
\mathrm{mgf} \coloneqq \Expect \trace \econst^{\mtx{S}_n}
	\leq \Expect \trace \econst^{\mtx{S}_n + g_R(1) \mtx{T}_{n}}.
\end{equation*}
Indeed, the trace exponential function is convex~\cite[Thm.~2.10]{Car10:Trace-Inequalities},
the random matrix $\mtx{S}_n$ is $\coll{F}_n$-measurable, and
$\Expect[ \mtx{T}_{n} \condbar \coll{F}_n ] = \mtx{0}$.

To continue, we replace the martingale increments with conditionally Gaussian matrices,
working backward from the end of the trajectory.  Here is the
initial step in this process:
\begin{align*}
\mathrm{mgf} &\leq \Expect \Expect[ \trace \econst^{\mtx{S}_{n-1} + \mtx{W}_n + g_R(1) \mtx{T}_{n}}
	\condbar \coll{F}_{n-1}, \mtx{T}_{n} ] \\
	&\leq \Expect \Expect[ \trace \econst^{\mtx{S}_{n-1} + g_R(1) \mtx{X}_{n} + g_R(1) \mtx{T}_{n}}
	\condbar \coll{F}_{n-1} ]
	= \Expect %
	\trace \econst^{\mtx{S}_{n-1} + g_R(1) \mtx{T}_{n-1}}.
\end{align*}
The first relation follows from %
the decomposition $\mtx{S}_{n} = \mtx{S}_{n-1} + \mtx{W}_n$ and the tower law.
Conditional on $\coll{F}_{n-1}$ and $\mtx{T}_n$, the centered random matrices
$\mtx{W}_n$ and $\mtx{X}_n$ have the same second-order moments.
Thus, we can replace the increment $\mtx{W}_n$ by the scaled Gaussian
proxy $g_R(1) \mtx{X}_n$ through a conditional application
of Proposition~\ref{prop:compare-one}.
This step relies on the assumption that $\lambda_{\max}(\mtx{W}_n) \leq R$ almost surely,
given $\coll{F}_{n-1}$.
To reach the last expression, recall that $\mtx{X}_{n} + \mtx{T}_{n} \sim \mtx{T}_{n-1}$
conditional on $\coll{F}_{n-1}$.

We may iterate the argument in the last paragraph to arrive at the bound
\[
\mathrm{mgf} \leq \Expect %
	\trace \econst^{\mtx{S}_{0} + g_R(1) \mtx{T}_{0}}
	= \Expect \trace \econst^{\mtx{S}_0 + g_R(1) (\mtx{Z} - \mtx{S}_0)}.
\]
Indeed, $\mtx{T}_0 \sim \mtx{Z} - \mtx{S}_0$.
This is the desired estimate.
\end{proof}

\subsection{Step 2: Bounds for the maximum eigenvalue}

To obtain tail bounds for a general matrix martingale,
we use a stopping time argument %
to exploit the trace mgf comparison (Proposition~\ref{prop:martingale-mgf})
for a matrix martingale that has bounded predictable variance.

\begin{proposition}[Maximum eigenvalue: Uniform probability bounds] \label{prop:freedman-bds}
Introduce the notation and hypotheses of Theorem~\ref{thm:freedman}.
At a level $t \geq 0$, the maximum eigenvalue admits a Bennett-type tail bound
\[
\Prob{ \exists i : \set{V}_i \psdle \set{V} \ \text{and} \ \lambda_{\max}(\mtx{Y}_i) \geq \mu %
	+ \big(\phi/3 + \sigma_*^2/R \big) \cdot t }
	\leq d \cdot \left( \frac{\econst^t}{(1+t)^{1+t}} \right)^{\phi/(3R) + \sigma_*^2/R^2}.
\]
The analogous Bernstein-type inequality reads
\[
\Prob{ \exists i : \set{V}_i \psdle \set{V} \ \text{and} \ \lambda_{\max}(\mtx{Y}_i) \geq \mu + t }
	\leq d \cdot \exp \left( \frac{-t^2/2}{R \phi / 3 + \sigma_*^2 + Rt / 3} \right).
\]
To control the failure probability at a level $s \geq 0$,
\[
\Prob{ \exists i : \set{V}_i \psdle \set{V} \ \text{and} \ \lambda_{\max}(\mtx{Y}_i) \geq \mu 	+ \sqrt{\left(\tfrac{1}{3} R \phi + \sigma_*^2 \right) \cdot 2s} + \tfrac{1}{3} Rs }
	\leq d \cdot \econst^{-s}.
\]
\end{proposition}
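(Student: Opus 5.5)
The plan is to reduce the trajectory bound to Proposition~\ref{prop:martingale-mgf} by stopping the martingale before its predictable variance exceeds $\set{V}$. Then we apply the maximal inequality (Fact~\ref{fact:doob}) to the stopped martingale and finish with the same scalar mgf calculus used in Proposition~\ref{prop:maxeig-prob}.

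First, the stopping construction. Define
\[
\tau \coloneqq \max\{ k \in \{0,\dots,n\} : \set{V}_k \psdle \set{V} \}.
\]
Because $\set{V}_{k+1}$ is $\coll{F}_k$-measurable and the version is increasing, the event $\{\tau \geq k\} = \{\set{V}_k \psdle \set{V}\}$ is $\coll{F}_{k-1}$-measurable. So $\tau$ is a predictable time, and in particular a stopping time. Note that $\set{V}_0 = \set{0} \psdle \set{V}$, so $\tau \geq 0$. Form the stopped martingale $\mtx{S}_k \coloneqq \mtx{Y}_{k \wedge \tau}$, whose increments are $\indicator\{\tau \geq k\}\,\mtx{W}_k$.

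These increments inherit the bound $\lambda_{\max} \leq R$, because when the indicator vanishes the increment is $\mtx{0}$ and $R \geq 0$. (If $R<0$ the theorem is degenerate; I would note $R \geq 0$ follows from centering.) Since the indicator is $\coll{F}_{k-1}$-measurable, the predictable variance of $\mtx{S}$ is $\set{V}_{k\wedge\tau}$, and this is $\psdle \set{V}$ pointwise. Hence $(\mtx{S}_k)$ satisfies the hypotheses of Proposition~\ref{prop:martingale-mgf}, with $\mtx{S}_0 = \mtx{Y}_0$ and the same Gaussian $\mtx{Z}$.

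Next, the event inclusion. If some $i$ has $\set{V}_i \psdle \set{V}$ and $\lambda_{\max}(\mtx{Y}_i) \geq t$, then $i \leq \tau$, so $\mtx{S}_i = \mtx{Y}_i$. The event is therefore contained in $\{\exists i : \lambda_{\max}(\mtx{S}_i) \geq t\}$. Fact~\ref{fact:doob} then bounds its probability by $\inf_{\theta>0} \econst^{-\theta t}\,\Expect\trace\econst^{\theta\mtx{S}_n}$.

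Then comes the mgf bound, which mirrors Propositions~\ref{prop:gauss-bound} and~\ref{prop:maxeig-prob}. Proposition~\ref{prop:martingale-mgf} gives
\[
\Expect\trace\econst^{\theta\mtx{S}_n} \leq \Expect\trace\econst^{g_R(\theta)(\mtx{Z}-\mtx{Y}_0)+\theta\mtx{Y}_0} \leq d\,\Expect\econst^{\lambda_{\max}(\cdot)}.
\]
Gaussian concentration (Fact~\ref{fact:gauss-conc}) contributes $\tfrac12 g_R(\theta)^2\sigma_*^2$. After scaling to $R=1$, write $g_1(\theta)(\mtx{Z}-\mtx{Y}_0)+\theta\mtx{Y}_0 = (g_1(\theta)-\theta)(\mtx{Z}-\mtx{Y}_0) + \theta\mtx{Z}$ and apply Weyl's inequality. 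Together with $0\le g_1(\theta)-\theta \le \tfrac13(\econst^\theta-\theta-1)$ and $\tfrac12 g_1^2 = \econst^\theta-\theta-1$, this yields
\[
\log \Expect\trace\econst^{\theta\mtx{S}_n} \leq \log d + \theta\mu + (\econst^\theta-\theta-1)(\phi/3+\sigma_*^2).
\]
This is exactly the scalar Bennett mgf bound. Optimizing over $\theta$ as in the standard Bennett and Bernstein derivations produces the three stated tail bounds, the last with $s$ chosen so the Bernstein expression equals $\econst^{-s}$.

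The main obstacle is the measure-theoretic care in the stopping step. One must verify that the stopped increment remains conditionally centered with conditional variance $\indicator\{\tau\ge k\}(\set{V}_k-\set{V}_{k-1})$, and that the pointwise ordering of the chosen version makes $\set{V}_{n\wedge\tau}\psdle\set{V}$ hold everywhere rather than merely almost surely. This is exactly why $\tau$ is defined via the predictable, increasing version of the predictable variance functions.
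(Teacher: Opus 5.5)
Your proposal is correct and follows the paper's proof. Given the increasing version, your $\tau = \max\{k : \set{V}_k \psdle \set{V}\}$ yields the same stopped martingale as the paper's $\kappa = \inf\{i : \set{V}_{i+1} \not\psdle \set{V}\}$. The rest is also the paper's sequence of steps: the event inclusion on $\{\set{V}_i \psdle \set{V}\}$, Fact~\ref{fact:doob}, Proposition~\ref{prop:martingale-mgf}, and the Bennett/Bernstein calculus of Proposition~\ref{prop:maxeig-prob}. You additionally spell out two points the paper only asserts: that $R \geq 0$, and that the stopped predictable variance is $\set{V}_{k \wedge \tau} \psdle \set{V}$ pointwise.
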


\begin{proof}
Fix a version of the predictable variance sequence where
$\set{V}_0 \psdle \set{V}_1 \psdle \cdots \psdle \set{V}_n$ pointwise.
Define the stopping time
\[
\kappa \coloneqq \inf\{ i : \set{V}_{i + 1} \not\psdle \set{V} \}.
\]
In full detail, the stopping time is the random variable
\[
\kappa(\omega) = \inf\{ i : \set{V}_{i + 1}(\mtx{M}; \omega) > \set{V}(\mtx{M}; \omega)\  \text{for some $\mtx{M} \in \Sym_d(\F)$} \}
\quad\text{for $\omega \in \Omega$.}
\]
In other words, $\kappa$ marks the first instant when we expect that the
next increment of the martingale will accumulate too much variance.
As usual, $\kappa = + \infty$ when the argument of the infimum is the empty set.
Note that $\kappa$ is a stopping time because $\set{V}_{i+1}$ is measurable
with respect to $\coll{F}_i$.
For $i = 0, \dots, n$, introduce the events
\[
\set{B}_i \coloneqq
 \{ \set{V}_i \psdle \set{V} \} =
 \{ \omega \in \Omega : \set{V}_i(\mtx{M}; \omega) \leq \set{V}(\mtx{M}; \omega)
\ \text{for all $\mtx{M} \in \Sym_d(\F)$} \}.
\]
On the event $\set{B}_i$, the stopping time $\kappa \geq i$.

Define the stopped martingale
$\mtx{S}_i \coloneqq \mtx{Y}_{i \wedge \kappa}$ for $i = 0, \dots, n$.
In other words, we freeze the value of the martingale just before
its predictable variance function can surmount the level $\set{V}$.
The sequence $(\mtx{S}_0, \dots, \mtx{S}_n)$ remains an $\set{L}_2$
matrix martingale, and each of its increments inherits the
maximum eigenvalue bound from the original martingale:
$\lambda_{\max}(\mtx{S}_{i} - \mtx{S}_{i-1}) \leq R$
for $i = 1, \dots, n$.
The construction of the stopped martingale guarantees that its
predictable variance functions are all bounded above
by $\set{V}$ pointwise,
so we may apply Proposition~\ref{prop:martingale-mgf}
to control its trace mgf.

Proceeding from the maximal inequality (Fact~\ref{fact:doob}),
we can develop a tail bound for the stopped martingale.
For a parameter $t \in \R$,
\begin{align*}
\Prob{ \exists i : \lambda_{\max}(\mtx{S}_i) \geq t }
	&\leq \inf\nolimits_{\theta > 0}\ \econst^{-\theta t} \cdot \Expect \trace \econst^{\theta \mtx{S}_n} \\
	&\leq \inf\nolimits_{\theta > 0}\ \econst^{-\theta t} \cdot \Expect \trace \econst^{g_R(\theta) (\mtx{Z} - \mtx{S}_0) + \theta \mtx{S}_0} \\
	&= \inf\nolimits_{\theta > 0}\ \econst^{-\theta t} \cdot \Expect \trace \econst^{g_R(\theta) (\mtx{Z} - \mtx{Y}_0) + \theta \mtx{Y}_0}.
\end{align*}
The second inequality is Proposition~\ref{prop:martingale-mgf},
and we have noted that $\mtx{S}_0 = \mtx{Y}_0$.
Meanwhile, we can bound the probability below by taking
intersections:
\begin{align*}
\Prob{ \exists i : \lambda_{\max}(\mtx{S}_i) \geq t }
	&\geq \Prob{ \exists i : \set{B}_i \ \text{and}\ \lambda_{\max}(\mtx{S}_i) \geq t } \\
	&= \Prob{ \exists i : \set{V}_i \psdle \set{V} \ \text{and}\ \lambda_{\max}(\mtx{Y}_i) \geq t }.
\end{align*}
Indeed, on the event $\set{B}_i$, we have $\mtx{S}_i = \mtx{Y}_i$ because $\kappa \geq i$.

Combine the two bounds from the last paragraph %
to reach the relation
\[
\Prob{ \exists i : \set{V}_i \psdle \set{V} \ \text{and}\ \lambda_{\max}(\mtx{Y}_i) \geq t }
	\leq \inf\nolimits_{\theta > 0}\ \econst^{-\theta t} \cdot \Expect \trace \econst^{g_R(\theta) (\mtx{Z} - \mtx{Y}_0) + \theta \mtx{Y}_0}.
\]
The rest of the argument parallels the proof of Proposition~\ref{prop:maxeig-prob}.
\end{proof}

\textbf{Computational Tools Statement.}  The mathematical content
and exposition in this paper are purely the work of the human author.
Generative language models have not been used for any purpose.

\begin{acks}[Acknowledgments]
The author thanks Otte Hein{\"a}vaara for his insight on Stahl's theorem,
Raphael Meyer for a close reading, and Ramon van Handel for his critical feedback.
The anonymous referees provided a wealth of useful comments,
including the technical insight that supports
the proof of Theorem~\ref{thm:freedman}.
\end{acks}
\begin{funding}
This research was supported in part by ONR Award N-00014-24-1-2223
and the Caltech Carver Mead New Adventures Fund.
\end{funding}

\bibliographystyle{imsart-number} %
\bibliography{Tro25-Comparison-Maximum.bib}       %

\end{document}